\documentclass[11pt,reqno, final]{amsart}

\usepackage{amsthm,amsmath,amssymb,booktabs,xcolor,graphicx}
\usepackage{bbm}
\usepackage{listings}
\usepackage{xcolor}
\usepackage{appendix}
\usepackage{enumerate}
\usepackage[shortlabels]{enumitem}
\usepackage{verbatim}

\usepackage[margin=1in]{geometry}
\usepackage[utf8]{inputenc}
\usepackage[T1]{fontenc}

\usepackage[textsize=small,backgroundcolor=orange!20]{todonotes}
\usepackage[colorlinks=true, allcolors=blue]{hyperref}
\usepackage{url}
\usepackage{etoolbox}
\usepackage{appendix}
\usepackage[nameinlink, noabbrev,capitalize]{cleveref}
\crefname{equation}{}{} %remove ``Equation''
 %Oxford comma
\AtBeginEnvironment{appendices}{\crefalias{section}{appendix}} %appendices

\usepackage[color]{showkeys} %add in 'final' into parameter to remove showkeys

% showkeys font
\colorlet{refkey}{orange!20}
\colorlet{labelkey}{blue!30}

% ------   Theorem Styles -------
\numberwithin{equation}{section}
\newtheorem{theorem}{Theorem}[section]

\newtheorem{lemma}[theorem]{Lemma}

\crefname{claim}{Claim}{Claims}

\newtheorem{corollary}[theorem]{Corollary}

\newtheorem*{question*}{Question}

\theoremstyle{definition}
\newtheorem{definition}[theorem]{Definition}

\newtheorem*{definition*}{Definition}

\theoremstyle{remark}
\newtheorem*{remark}{Remark}

% ----- Delimiters ----

\newcommand{\snorm}[1]{\lVert#1\rVert}

\newcommand{\sang}[1]{\langle #1 \rangle}

\newcommand{\mb}{\mathbb}
\newcommand{\mbf}{\mathbf}
\newcommand{\mbm}{\mathbbm}
\newcommand{\mc}{\mathcal}
\newcommand{\mf}{\mathfrak}

\newcommand{\on}{\operatorname}

\newcommand{\wt}{\widetilde}

\allowdisplaybreaks

\title{The smallest singular value of dense random regular digraphs}

\author[Jain]{Vishesh Jain}
\address{Department of Statistics, Stanford University,
Stanford, CA 94305, USA}
\email{visheshj@stanford.edu}

\author[Sah]{Ashwin Sah}
\author[Sawhney]{Mehtaab Sawhney}
\address{Department of Mathematics, Massachusetts Institute of Technology, Cambridge, MA 02139, USA}
\email{\{asah,msawhney\}@mit.edu}

\begin{document}
\begin{abstract}
Let $A$ be the adjacency matrix of a uniformly random $d$-regular digraph on $n$ vertices, and suppose that $\min(d,n-d)\ge\lambda n$. We show that for any $\kappa \geq 0$,
\[\mb{P}[s_n(A)\le\kappa]\le C_\lambda\kappa\sqrt{n}+2e^{-c_\lambda n}.\]
Up to the constants $C_\lambda, c_\lambda > 0$, our bound matches optimal bounds for $n\times n$ random matrices, each of whose entries is an i.i.d $\on{Ber}(d/n)$ random variable. The special case $\kappa = 0$ of our result confirms a conjecture of Cook regarding the probability of singularity of dense random regular digraphs.   
%This confirms, in a stronger form, a conjecture of Cook regarding singularity of dense regular digraphs.
\end{abstract}

\maketitle

\section{Introduction}
For positive integers $d \leq n$, let $\mc{M}_{n,d}$ denote the set of all $n\times n$ matrices with entries in $\{0,1\}$ for which each row and each column sums to $d$. One may interpret an element $A \in \mc{M}_{n,d}$ either as the adjacency matrix of a $d$-regular digraph (directed graph) on $n$ labelled vertices (where self loops are allowed, but multiple edges are not allowed) or as the adjacency matrix of a $d$-regular bipartite graph on $n + n$ labelled vertices.

Recall that the smallest singular value of a real $n\times n$ matrix $A$ is defined to be
$$s_{n}(A) = \inf_{x\in \mb{S}^{n-1}}\snorm{Ax}_2,$$
where $\mb{S}^{n-1}$ denotes the unit sphere in $\mb{R}^{n}$ and $\snorm{\cdot}_2$ denotes the standard Euclidean norm on $\mb{R}^{n}$. In particular, $A$ is singular (non-invertible) if and only if $s_n(A) = 0$. 

This paper is concerned with the non-asymptotic study of the smallest singular value of a randomly chosen element of $\mc{M}_{n,d}$, in the regime where $d$ is comparable to $n$ (i.e., in the graph theoretic interpretation above, our focus is on dense digraphs). Our main result is the following. 
\begin{theorem}
\label{thm:main}
Let $\lambda \in (0,1)$. There exist constants $C_\lambda, c_\lambda > 0$, depending only on $\lambda$, for which the following holds. Let $d\leq n$ be positive integers with $\min(d,n-d) \geq \lambda n$. Then, for any $\kappa \geq 0$,
$$\mb{P}_{A \sim \mc{M}_{n,d}}[s_{n}(A) \leq \kappa] \leq C_{\lambda}\kappa \sqrt{n} + 2e^{-c_{\lambda}n},$$
where $A\sim \mc{M}_{n,d}$ denotes a uniformly chosen element of $\mc{M}_{n,d}$. 
\end{theorem}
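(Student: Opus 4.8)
The plan is to run the Rudelson--Vershynin compressible/incompressible dichotomy for the least singular value in the random regular setting. After preliminary reductions we may assume $\kappa\le c/\sqrt n$ (otherwise $C_\lambda\kappa\sqrt n\ge 1$ and there is nothing to prove), and, replacing $A$ by its complement $\one\one^{\top}-A\in\mc{M}_{n,n-d}$ if necessary (which has the same restriction to $\one^{\perp}$ up to sign), that $\lambda n\le d\le n/2$. Since $A\one=d\one=A^{\top}\one$, the vector $\one$ is a singular vector of $A$ with singular value $d\ge\lambda n$ and $A$ preserves $\one^{\perp}$, so
\[
s_n(A)=\min\paren{d,\ \inf_{x\in\mb{S}^{n-1}\cap\one^{\perp}}\norm{Ax}_2},
\]
and for the relevant range of $\kappa$ it suffices to control the infimum on the right. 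Fix small $\delta,\rho>0$ (to be chosen in terms of $\lambda$) and split $\mb{S}^{n-1}\cap\one^{\perp}$ into the compressible vectors $\on{Comp}(\delta,\rho)$ (those within $\ell_2$-distance $\rho$ of a $\delta n$-sparse vector) and the incompressible vectors $\on{Incomp}(\delta,\rho)$.

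\smallskip
\noindent\textbf{Compressible vectors.} Here I would prove the crude bound $\mb{P}\sqb{\inf_{x\in\on{Comp}(\delta,\rho)}\norm{Ax}_2\le c_0\sqrt n}\le e^{-c_1 n}$, which suffices since $\kappa\le c/\sqrt n<c_0\sqrt n$. This combines (i) a high-probability operator-norm bound $\norm{A|_{\one^{\perp}}}_{\mr{op}}\le C\sqrt n$ (standard for dense regular digraphs, via the moment method or known spectral-gap estimates), which lets one pass to an $(\epsilon/\sqrt n)$-net of $\on{Comp}(\delta,\rho)$ of cardinality $\exp(O_\epsilon(\delta\log(1/\delta)\,n))$; and (ii) a pointwise estimate $\mb{P}[\norm{Ay}_2\le c_0\sqrt n]\le e^{-c_2 n}$ for each net vector $y$, with $c_2$ independent of $\delta$ so that shrinking $\delta$ beats the metric entropy. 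For (ii) one reveals the rows $R_1,\dots,R_n$ of $A$ in turn: the conditional law of $R_i$ given $R_1,\dots,R_{i-1}$ is uniform over $d$-subsets compatible with the remaining column budget, and for $y$ not too close to a bounded-dimensional coordinate subspace one obtains, using $\min(d,n-d)\ge\lambda n$, an anti-concentration bound $\mb{P}[\abs{\ang{R_i,y}}\le\epsilon_0\mid R_1,\dots,R_{i-1}]\le 1-c_3$ for a linear number of indices $i$; a martingale deviation inequality then forces $\#\set{i:\abs{\ang{R_i,y}}>\epsilon_0}\ge c_3 n/4$ outside probability $e^{-c_2 n}$, so $\norm{Ay}_2^2\ge\epsilon_0^2 c_3 n/4$. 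The finitely many remaining ``spiky'' directions are handled deterministically, using that for $y$ supported on $S$ one has $\norm{Ay}_2^2\ge d\norm{y}_2^2+(\text{off-diagonal terms})$, whose diagonal part alone is of size $\asymp n$.

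\smallskip
\noindent\textbf{Incompressible vectors.} This is the heart of the argument and the step I expect to be the main obstacle. The textbook reduction---$s_n(A)$ small forces a column of $A$ to lie near the span of the others, and that column is independent of that span---fails here, because fixing all but one column of a regular matrix determines the entire matrix. I would instead expose $A$ in two stages: first reveal a carefully chosen linear-sized sub-block (and the partial margins it induces), arranged so that the complementary part stays conditionally ``regular-like'' and genuinely random; then, via a Schur-complement/interlacing identity together with incompressibility, reduce $\mb{P}[s_n(A)\le\kappa]$ to $\tfrac{C}{n}\sum_{k}\mb{P}[\on{dist}(A_k,H_k)\le C\kappa\sqrt n]$ plus an $e^{-cn}$ structural error, where $H_k$ is a subspace determined by the first-stage data and a bounded-dimensional correction, and where the ``free'' column $A_k$ still carries substantial conditional randomness. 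Writing $\on{dist}(A_k,H_k)=\abs{\ang{A_k,v}}$ for a unit normal $v$, this becomes a small-ball problem for $\ang{A_k,v}=\sum_j(A_k)_j v_j$ in which, conditionally, $A_k$ is a near-uniform random $d$-subset with non-degenerate marginals ($d/n\in[\lambda,1/2]$). Two inputs then close the argument: a \emph{structural theorem} that with probability $1-e^{-cn}$ the normal vector $v$ is incompressible with least common denominator $\on{LCD}_\alpha(v)\ge e^{cn}$---proved by a net argument over the small set of unit vectors of small LCD, together with a preliminary polynomial invertibility bound $s_n(A)\ge n^{-C}$ to pin down the relevant scales---and an \emph{anti-concentration bound} which, for $v$ incompressible of exponentially large LCD, gives via inverse Littlewood--Offord/Berry--Esseen (after a short decoupling of the negatively associated coordinates of $A_k$) that $\sup_t\mb{P}[\abs{\ang{A_k,v}-t}\le s]\le C(s+e^{-cn})$. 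Taking $s\asymp\kappa\sqrt n$ bounds each summand by $C(\kappa\sqrt n+e^{-cn})$; summing over $k$ with the $1/n$ prefactor, and adding the $e^{-cn}$ losses from the compressible case and the structural theorem, yields $\mb{P}[s_n(A)\le\kappa]\le C_\lambda\kappa\sqrt n+2e^{-c_\lambda n}$.

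\smallskip
\noindent\textbf{Where the difficulty lies.} The genuinely delicate part is making the two-stage exposure of $A$ and the Schur-complement reduction compatible with the sharp anti-concentration input: one needs the conditional law of the free column $A_k$ to remain spread over $\asymp n$ coordinates with stable marginals, and one needs the inverse-Littlewood--Offord/LCD analysis of the normal vector $v$ to survive the $2n$ regularity constraints without degrading the $e^{-cn}$ error exponent (equivalently, matching what one would get for an i.i.d.\ $\on{Ber}(d/n)$ matrix). By contrast, the compressible estimate and the final assembly are routine once this machinery is in place.
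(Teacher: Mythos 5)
Your high-level framing matches the paper's (restrict to $\mb{S}_0^{n-1}$, Rudelson--Vershynin compressible/incompressible decomposition, reduce the incompressible case to anti-concentration of an inner product against an arithmetically unstructured normal vector), and you have correctly located the central obstacle: that conditioning on all-but-one row/column leaves no randomness. But the way you propose to overcome it is too vague where it matters, and in several places relies on ingredients you would not actually have.

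The main gap is that you never specify a mechanism that yields $n-O(1)$ genuinely independent anti-concentration events from a regular matrix; this is what the paper's refined switching operation supplies. A ``carefully chosen linear-sized sub-block'' and a ``Schur-complement/interlacing identity'' do not by themselves produce such events, and the naive row-pairing switching (already used by Cook) gives only $n/2$ of them, which ruins the constant in the exponent and in particular cannot reach the singularity conjecture. The paper's fix is to choose, from a \emph{constant-size universal family}, a permutation $\sigma$ and a splitting set $S$ of size $n/2$, reveal $(R_{2i-1}+R_{2i})|_S$ and $(R_{2i}+R_{2i+1})|_{S^c}$, and work with the independent $\pm1$ differences on the switching sets $T_i$; this is the concrete content that is missing from your sketch. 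A second gap is your treatment of arithmetic structure. You invoke the ordinary LCD, but for the conditional law of a row --- uniform over $d$-subsets with a prescribed restricted sum --- the right invariant is Tran's CLCD (built from the differences $v_i-v_j$), since $\ang{A_k,v}$ is invariant under shifting $v$ by a constant. Moreover, because some of the restrictions $v|_{T_i}$ may be structured through no fault of $v$, one must pass to a \emph{quantile} CLCD that discards the $O(1)$ worst restrictions and then exploits a well-spreadness property of the family $\{T_i\}$ to still find a good restriction for the net; neither of these moves appears in your sketch. A third issue is the proposed ``structural theorem'' step: you say to combine a net over small-LCD vectors with a preliminary bound $s_n(A)\ge n^{-C}$, but that preliminary bound is not available independently, and more fundamentally the union-bound argument over LCD level sets requires exactly the per-vector anti-concentration that is unavailable until the switching machinery is set up --- the logic is circular as stated. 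Finally, you assume $\|A|_{\one^\perp}\|_{\mr{op}}\le C\sqrt n$ for the compressible net; this is true but itself nontrivial, and the paper deliberately avoids it by using Livshyts' randomized-rounding net, which only needs $\|A\|_{\mathrm{HS}}\lesssim n$. If you want to use the spectral gap bound you should cite or prove it; otherwise the net cardinality against $\|A\|_{\mr{op}}\asymp n$ is not summable.

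The compressible step in your proposal, while less precise than the paper's (which uses Paley--Zygmund plus hypercontractivity on the central slice, via a row-pair switching conditioning), is reasonable in spirit and would likely go through with the same operator-norm caveat. And your ``averaging to a distance-to-hyperplane'' reduction is in the right direction; the paper realizes it via a variant of the Litvak et al.\ argument (conditioning on everything but $(A_{\sigma(1)}-A_{\sigma(2)})|_{T_1}$ and using a distance inequality with an ``approximate normal'' $x(\wt{M}_H)$). But without a concrete switching scheme, a quantile-CLCD style structure theorem, and a net adapted to the large operator norm, the incompressible step as you have written it would not close.
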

\begin{remark}
The case $\kappa = 0$ of the above theorem shows that $\mb{P}[A \text{ is non-invertible}]\leq 2e^{-c_\lambda n}$. This confirms a conjecture of Cook \cite[Conjecture~1.7]{Cook17b}, and is optimal up to the constants $2$ and $c_\lambda$ (as can be seen by considering the probability that two rows of $A$ are identical). Moreover, up to the constants $C_\lambda, c_\lambda$, and $2$, \cref{thm:main} matches known and optimal results for random matrices, each of whose entries is an independent copy of a $\on{Ber}(d/n)$ random variable (i.e., a random variable which is $1$ with probability $d/n$ and $0$ with probability $1-(d/n)$) (cf. \cite{livshyts2019smallest, tikhomirov2020singularity}). 
\end{remark}

In the next subsection, we provide context for our work, as well as an overview of previous results. 
\subsection{Background}
\label{sub:background}
The non-asymptotic study of the smallest singular value of random matrices goes back at least to the work on numerical analysis by von Neumann and his collaborators. The starting point of our work is the seminal result of Rudelson and Vershynin \cite{rudelson2008littlewood}, showing that for an $n\times n$ random matrix $A$, each of whose entries is an independent and identically distributed (i.i.d) subgaussian random variable with mean $0$ and variance $1$, and for  any $\kappa \geq 0$,  
\begin{align}
\label{eqn:rv}
\mb{P}[s_n(A) \leq \kappa] \leq C\kappa \sqrt{n} + C \exp(-c n),
\end{align}
where the constants $C, c > 0$ depend only on the distribution of the entries. This result is optimal up to the constants $C, c$ and (again, up to constants) unified and substantially extended many previous results, such as works of Edelman \cite{edelman1988eigenvalues} and Szarek \cite{szarek1991condition} on the smallest singular value of i.i.d centered Gaussian matrices, work of Kahn, Koml\'os, and Szemer\'edi \cite{kahn1995probability} on the probability of singularity of i.i.d Rademacher (i.e., $\pm 1$ with probability $1/2$ each) matrices, and work of Tao and Vu \cite{tao2009inverse} on the smallest singular value of i.i.d Rademacher matrices. In recent years, much work has gone into relaxing the distributional assumptions in the above result of Rudelson and Vershynin; the current best result is due to Livshyts, Tikhomirov, and Vershynin \cite{livshyts2019smallest}, who establish the bound \cref{eqn:rv} assuming only that the entries of $A$ are independent, uniformly anti-concentrated, and that the expected squared Hilbert-Schmidt norm of $A$, $\mb{E}\snorm{A}_{\on{HS}}^{2}$, is $O(n^{2})$.

Progress in the non-asymptotic study of the smallest singular value of random matrices with \emph{dependent} entries has been comparatively much slower. For instance, the fact that uniformly random $n\times n$ \emph{symmetric} $\{\pm 1\}$-valued matrices are invertible with probability tending to $1$ as $n\to \infty$ was only established in 2006 by Costello, Tao, and Vu \cite{costello2006random}, even though the corresponding non-symmetric result was already known nearly 40 years before due to Koml\'os \cite{komlos1967determinant}. Similarly, for (centered) subgaussian symmetric matrices (i.e., the entries on and above the diagonal are i.i.d copies of a centered subgaussian random variable), even though it is believed that \cref{eqn:rv} should hold, the best-known analog of \cref{eqn:rv} due to Vershynin \cite{vershynin2014invertibility} has $\kappa$ in the first term replaced by the suboptimal $\kappa^{1/9}$, and $\exp(-cn)$ in the second term replaced by the suboptimal $\exp(-n^{c})$ for some small constant $c > 0$. Despite recent efforts to optimize this constant in the case of Rademacher random variables, \cite{ferber2019singularity,campos2019singularity}, the best known bound for singularity of symmetric Bernoulli matrices is $\exp(-cn^{1/2})$ \cite{campos2019singularity}, which remains far from the conjectured exponential behavior, even though in the i.i.d Rademacher case, the near-optimal bound $(1/2 + o(1))^{n}$ on the singularity probability has recently been obtained in breakthrough work of Tikhomirov \cite{tikhomirov2020singularity}).

Another popular model of random matrices with dependent entries, which has attracted considerable attention in recent years and is the subject of the present work, is the adjacency matrix of a random $d$-regular digraphs. Note that this model has the interesting feature that \emph{no} two entries are independent of each other (in contrast with random symmetric matrices, where the dependencies are localized). Works of Cook \cite{Cook17b}, Litvak et al.\ \cite{litvak2017adjacency}, and Huang \cite{huang2018invertibility} (covering complementary regimes) show that for all $3 \leq d \leq n-3$, the probability that a uniformly random element of $\mc{M}_{n,d}$ is invertible tends to $1$ as $n\to \infty$. While it was conjectured by Cook in \cite{Cook17b} that for $\min(d, n-d) = \Omega(n)$, the singularity probability should be exponentially small (this is a special case of our \cref{thm:main}), we note that none of these works show that the singularity probability (in any regime) is smaller than even $1/\sqrt{n}$. 

The smallest singular value of uniformly random elements of $\mc{M}_{n,d}$ has been considered in the works \cite{cook2019circular, LLTTY19}. With notation as in \cref{thm:main}, Cook \cite{cook2019circular} showed that $\mb{P}[s_n(A) \leq n^{-O(\log{n}/\log{d})}] \leq O(\log^{O(1)}n/\sqrt{d})$, while Litvak et al.\ \cite{LLTTY19} showed that for $C \leq d \leq n/\log^{2}n$, $\mb{P}[s_n(A) \leq n^{-6}] \leq O(\log^{2}d/\sqrt{d})$. Note that the result of Litvak et al.\ operates in a complementary regime of $d$ compared to \cref{thm:main}, whereas the result of Cook, restricted to the regime of \cref{thm:main} gives the much weaker bound $\mb{P}[s_n(A) \leq n^{-C_\lambda}] \leq O(\log^{O(1)}/\sqrt{n})$. We note that the results of \cite{cook2019circular, LLTTY19} are actually valid for more general random matrices $A - z\on{Id}$, where $z \in \mb{C}$ is a fixed complex number with $|z| \leq \sqrt{d}$; this is crucial for the application of proving a weak circular law for $\mc{M}_{n,d}$, for which the bounds in \cite{cook2019circular, LLTTY19} are sufficient. However, for many other applications, such as the study of gaps between eigenvalues \cite{ge2017eigenvalue}, delocalization of eigenvectors \cite{rudelson2016no}, and strong circular laws \cite{tao2010random}, stronger bounds such as our \cref{thm:main} are needed. So as to not overburden the presentation, we have not pursued the direction of obtaining such bounds for $A - z\on{Id}$, but anticipate that this should be possible by adding to the proof in this paper the additional notion of real-imaginary correlations \cite{rudelson2016no}. 

Finally, we mention that a couple of models of random matrices have been studied to serve as a `warm-up' for investigating uniformly random elements of $\mc{M}_{n,d}$. Of these, the most fruitful has been the model of $\{0,1\}$-valued matrices $B$ with independent rows, such that each row is drawn uniformly from $\{0,1\}^{n}$ subject to the sum of the row being exactly $d$, although note that the independence of the rows makes the study of this model quite a bit simpler (see the discussion in the next subsection). In the same regime of $d$ as in \cref{thm:main}, Nguyen \cite{nguyen2013singularity} showed that the probability of singularity of such a matrix is at most $O_{C}(n^{-C})$ for any $C > 0$, which was improved by Ferber et al.\ \cite{ferber2019counting} to $O(\exp(-n^{c}))$ for some small constant $c > 0$. For the smallest singular value, Nguyen and Vu \cite{nguyen2012circular} showed that for any $C > 0$, there exists $D > 0$ such that $\mb{P}[s_n(B)\leq n^{-D}] = O(n^{-C})$. This was improved by Jain \cite{jain2019approximate} to $\mb{P}[s_n(B)\leq \kappa] = O(\kappa n^{2} + \exp(-n^{c}))$, for some small constant $c > 0$ and for all $\kappa \geq 0$. Very recently, Tran \cite{Tra20} obtained an optimal estimate of the form \cref{thm:main} for this model; the notion of Combinatorial Least Common Denominator (CLCD) introduced in his work will be useful for us.   

\subsection{Overview of the proof} To better illustrate our ideas, we begin by briefly recalling the geometric framework of Rudelson and Vershynin \cite{rudelson2008littlewood} for controlling the smallest singular value of an $n\times n$ matrix $M$ with i.i.d sub-Gaussian entries. The unit sphere $\mb{S}^{n-1}$ is decomposed into compressible vectors (i.e., those which are close to sparse vectors), and incompressible vectors. It is not hard to show that for any unit vector $x$, $\snorm{Mx}_2 =  \Omega(\sqrt{n})$ (except with exponentially small probability); the estimate for compressible vectors then follows from the low metric entropy of the set of compressible vectors, as well as the fact that the operator norm of $M$ is $O(\sqrt{n})$ (except with exponentially small probability). For incompressible vectors, an efficient averaging procedure reduces to studying the distance of the last (say) row of the matrix to the span of the first $n-1$ rows. This amounts to studying the inner product of the last row of the matrix with a unit vector orthogonal to the span of the first $n-1$ rows. The remainder of the proof is then devoted to showing that any unit vector orthogonal to the first $n-1$ rows of the matrix is (except with exponentially small probability) arithmetically very unstructured, in the sense of having exponentially large Least Common Denominator (LCD). This is accomplished via a union bound -- we decompose the relevant range of the LCD dyadically, and note that for each dyadic interval $[D, 2D)$, the metric entropy at the relevant scale is swamped by the probability of the image of the vector under $M$ having small norm. In slightly more detail (and omitting absolute constants), for $\alpha = \mu \sqrt{n}$, where $\mu > 0$ is a small constant which can be freely chosen \emph{at the end} of the argument, it is seen that for $x \in \mb{S}^{n-1}$ with LCD in the dyadic interval $[D,2D)$, the probability that $\snorm{Mx}_2 \leq \alpha \sqrt{n}/D$ is at most $(\alpha/D)^{n-1}$. On the other hand, there is an $(\alpha/2D)$-net of such vectors of size at most $(D/\sqrt{n})^{n}$. For the relevant range of $D$, this leads to the exponential gain of $\mu^{n}$.    

Our proof of \cref{thm:main}, while broadly based on the geometric framework, encounters challenges at every step due to the lack of dependence between the entries.

\noindent \textbf{Working on $\mb{S}^{n-1}_{0}$: }In contrast to the i.i.d case, there is no uniform anti-concentration estimate available for general $x \in \mb{S}^{n-1}$ in our setting -- for instance, the inner product of any row with the vector $(1/\sqrt{n},\dots,1/\sqrt{n})$ is always $d/\sqrt{n}$. To avoid this issue, we always restrict to the part of the unit sphere orthogonal to the all ones vector (denoted by $\mb{S}_{0}^{n-1}$), noting that the smallest singular vector must always be a part of this set. Moreover, it is seen (\cref{cor:incomp-signed-spread}) that incompressible vectors in $\mb{S}_{0}^{n-1}$ have linearly many positive and negative coordinates of size $\Theta(1/\sqrt{n})$ -- this enables us to avoid explicit use of other classes of vectors, such as non almost-constant vectors in \cite{Cook17b}. 

\noindent \textbf{Refined switching: }One of the main challenges in adapting the geometric framework to our model is the lack of independence between rows. Notably, any collection of $n-1$ rows completely determines the remaining row, which precludes the use of the distance-from-hyperplane reduction in the i.i.d case. To overcome this challenge, previous works on this model (starting with the pioneering work of Cook \cite{Cook17b}) have used a `switching' operation based on the following observation. Even after conditioning on the sum of two distinct rows (say $R_k$ and $R_\ell$), there is additional randomness remaining on the set of coordinates where the sum $R_k + R_\ell$ is exactly $1$, in the following sense: for two distinct coordinates $i,j$ in this set, it is equally likely that $R_k(i) = 1, R_\ell(i) = 0, R_k(j) = 0, R_\ell(j) = 1$ or $R_k(i) = 0, R_\ell(i) = 1, R_k(j) = 1, R_\ell(j) = 0$. For our purpose, such a switching operation based on pairing entire rows is too inefficient (since it effectively increases the key probability estimate of $(\alpha/D)^{n-1}$ in the i.i.d case to approximately $(\alpha/D)^{n/2}$). Hence, we introduce a refined switching operation, which takes a `splitting set' $S$ of size $n/2$ and a permutation $\sigma$, and then (roughly) switches $R_{\sigma(2i-1)}|_{S}$ with $R_{\sigma(2i)}|_{S}$ and $R_{\sigma(2i)}|_{S^c}$ with $R_{\sigma(2i+1)}|_{S^{c}}$ -- this ensures that we have access to $n - O(1)$ independent anti-concentration events (see, e.g., \cref{eqn:anti-y}). The set $S$ and permutation $\sigma$ are chosen from a collection -- crucially of constant size (\cref{lem:robust-split-matching}) -- satisfying certain properties (see also the discussion after \cref{def:T}).  

\noindent \textbf{Quantile CLCD (QCLCD): }Our substitute for the notion of LCD is the QCLCD (\cref{def:QCLCD}), which is based on the CLCD recently introduced by Tran \cite{Tra20} with the following crucial twist: we consider the $\ell$th smallest (for $\ell = O(1)$) CLCD of a carefully chosen collection of restrictions of the vector. \cref{def:QCLCD} has some similarities to the notion of `$(t,\ell)$-bad vectors' in \cite[Definition~4.3]{ferber2019counting}, in that both definitions remove the `very worst' restrictions of a vector. However, in our application, we will be able to remove only the $O(1)$ worst restrictions, as opposed to \cite{ferber2019counting}, where the $n^{\epsilon}$ worst restrictions need to be removed. This will be crucial in proving Cook's conjecture, for which removing $\omega(1)$ restrictions is already insufficient. The main idea behind the definition of the QCLCD is the following: suppose the QCLCD of a vector (such that the restriction sets form a well-spread family (\cref{def:well-spread})) is $D$. Then, we know that all but $\ell  = O(1)$ restrictions of the vector have CLCD at least $D$, so that we will still have access to $n- O(1)$ independent anti-concentration events (heuristically, compared to the i.i.d case, we now have a term of $(\alpha/D)^{n- O(1)}$). On the other hand, the definition of a well-spread family will ensure that at least one of the (linear-sized) restrictions falls within a level set of the CLCD (\cref{def:level-set-CLCD}); this information will allow us to obtain much more efficient nets for level sets of the QCLCD (\cref{lem:HS-net-QCLCD}) (heuristically, of size $(D/\sqrt{n})^{c_\lambda n}\times (D/\alpha)^{n-c_\lambda n}$) which will enable the union bound argument in \cref{sub:small-QCLCD} to go through, since we have a gain of $\mu^{c_\lambda n}$ (compared to $\mu^{n}$ in the i.i.d case, but this is certainly enough). We note here that since the operator norm of $A$ is $d$ (which is order $n$ as opposed to order $\sqrt{n}$), the standard nets used in the i.i.d sub-Gaussian case will be insufficient, and we will instead make use of the more refined randomized-rounding based net construction due to Livshyts \cite{Liv18}.

\noindent \textbf{New quasi-randomness properties: }To execute the strategy in the previous paragraph, in particular to ensure that the set of restrictions form a well-spread family, we will need several quasi-randomness properties of random $d$-regular digraphs. Some of these are similar to those appearing in previous works \cite{Cook17b, LLTTY17}, whereas some of them are stronger. We provide a concise proof (exploiting the asymptotic enumeration of digraphs with a prescribed degree sequence due to Canfield et al. \cite{CGM08}) that a random regular digraph has these properties except with exponentially small probability (\cref{thm:quasirandom}). We note that these quasi-randomness properties are also important in our proof that for any $x\in \mb{S}^{n-1}_{0}$, $\snorm{Ax}_2 = \Omega(\sqrt{n})$ except with exponentially small probability (\cref{lem:single-vector}), which is used to handle the compressible case.  

\noindent \textbf{Partitioning the set of regular digraphs: }There is one final issue, which is that we cannot condition on the first $n-1$ rows as in the i.i.d case (since then, the last row is completely determined). Overcoming this is based on the general strategy of Litvak et al. \cite{LLTTY19}, except that we also need to incorporate arithmetic structure. Roughly, the argument proceeds as follows: given the target $\kappa$ for the smallest singular value in \cref{thm:main}, we first rule out vectors with QCLCD at most $\mu n/\kappa$ (where $\mu$ is a small constant) using a union bound argument as outlined above (compare this to the i.i.d case, where the union bound argument rules out all vectors with subexponential LCD). The remaining vectors are then assigned to constantly many partitions, based on the choice of a `splitting set' $S$ and permutation $\sigma$ as above. For the rest of this discussion, fix such a part. We are then able to use a modification of an argument of Litvak et al. \cite{LLTTY19} to reduce to the event that row $A_{\sigma(1)}$ (say) has small inner product with a vector $v$ determined only by rows $A_{\sigma(1)}+A_{\sigma(2)},A_{\sigma(2)},\dots,A_{\sigma(n)}$. Since we have already ruled out vectors with QCLCD at most $\mu n/\kappa$, the vector $v$ will be seen to have large CLCD (with respect to the randomness available by switching the relevant restrictions of $A_{\sigma(1)}, A_{\sigma(2)}$), at which point we are able to conclude.    

\subsection{Extensions}
While we have not pursued the direction of analysing the smallest singular value of complex shifts of $A$, we believe that given our general framework for handling arithmetic structure for random regular digraphs, this should be possible by adding the ingredient of real-imaginary correlations \cite{rudelson2016no}. With appropriate modifications, our methods should also extend to more general dense contingency tables. Finally, we believe that our notion of quantile-based LCDs should be generally useful in studying the smallest singular value of random matrices with dependent entries.

\subsection{Notation}
For $A \in \mc{M}_{n,d}$, we will denote rows by $A_i$ and columns by $A^{(i)}$. For an integer $N$, $\mb{S}^{N-1}$ denotes the set of unit vectors in $\mb{R}^{N}$, and $\mb{S}_0^{N-1}$ denotes the set of points $x = (x_1,\dots,x_N) \in \mb{S}^{N-1}$ such that $\sum_{i=1}^{N} x_i = 0$. Also, $B_2^N$ denotes the unit ball in $\mb{R}^{N}$ (i.e., the set of vectors of Euclidean norm at most $1$). For a matrix $A = (a_{ij}) \in \mb{R}^{N\times N}$, $\snorm{A}$ is its spectral norm (i.e., $\ell^{2} \to \ell^{2}$ operator norm), and $\snorm{A}_{\on{HS}}$ is its Hilbert-Schmidt norm, defined by $\snorm{A}_{\on{HS}}^{2} = \sum_{i,j}a_{ij}^{2}$.

We will let $[N]$ denote the interval $\{1,\dots, N\}$, $\mf{S}_{[N]}$ denote the set of permutations of $[N]$, and $\binom{[N]}{k}$ denote the set of subsets of $[N]$ of size exactly $k$. We will denote multisets by $\{\{\}\}$, so that $\{\{a_1,\dots, a_{n}\}\}$, with the $a_i$'s possibly repeated, is a multi-set of size $n$. For a vector $v \in \mb{R}^{N}$ and $T\subseteq [N]$, $v|_{T}$ denotes the $|T|$-dimensional vector obtained by only retaining the coordinates of $v$ in $T$.

We will also make extensive use of asymptotic notation. For functions $f,g$, $f = O_{\alpha}(g)$ (or $f\lesssim_{\alpha} g$ means that $f \leq C_\alpha g$, where $C_\alpha$ is some constant depending on $\alpha$; $f = \Omega_{\alpha}(g)$ (or $f \gtrsim_{\alpha} g$) means that $f \geq c_{\alpha} g$, where $c_\alpha > 0$ is some constant depending on $\alpha$, and $f = \Theta_{\alpha}(g)$ means that both $f = O_{\alpha}(g)$ and $f = \Omega_{\alpha}(g)$ hold. For  parameters $\varepsilon, \delta$, the relation $\varepsilon \ll_{\alpha} \delta$ means that $\varepsilon$ is smaller than $c_{\alpha}(\delta)$ for a sufficiently decaying function $c_{\alpha}$ depending on $\alpha$. In practice, the function $c_\alpha$ will always be polynomial with coefficients depending on $\alpha$.

All logarithms are natural, unless indicated otherwise, and floors and ceilings are omitted when they make no essential difference.
% \todo{TODO}
% Rows of $A$ are $A_i$. We are $d$-regular for $d = \lambda n$, where $\lambda\in(0,1/2]$
% $\mb{S}^{N-1}$, $\mb{S}_{0}^{N-1}$ $B_2^{N}$, $\mf{S}_{[N]}$, $O_{\alpha}(\cdot)$, $\Omega_{\alpha}(\cdot)$, $[N]$, $\binom{[N]}{k}$, $\ll, \gtrsim$, digraph notation, $\lesssim, \gtrsim$, multisets $\{\{ \}\}$, $\snorm{\cdot}$, $\snorm{\cdot}_{\on{HS}}$, $\mc{M}_{n,d}$, $\mbf{1}$, $v|_{T}$
\subsection{Organization}
The remainder of this paper is organized as follows. In \cref{sec:preliminaries}, we collect some preliminaries; the main new results are \cref{lem:robust-split-matching}, \cref{thm:quasirandom}, and \cref{lem:single-vector}. In \cref{sec:rerandomization}, we introduce our refined switching technique, as well as the notion of the QCLCD, and discuss several key properties. Finally, \cref{sec:singular-value} proves \cref{thm:main}.

\section{Preliminaries}\label{sec:preliminaries}
For the remainder of this paper, we will assume that $\lambda \in (0,1/2]$. This can be done without loss of generality due to the following reason: for any $A \in \mc{M}_{n,d}$, the vector, each of whose coordinates is $1/\sqrt{n}$, is deterministically a unit vector achieving the largest singular value; hence, any vector attaining the smallest singular value of $A$ must belong to $\mb{S}_{0}^{n-1}$. Moreover, for any $x \in \mb{S}_{0}^{n-1}$ and $A \in \mc{M}_{n,d}$, we have $\snorm{ A x}_2 = \snorm {(J - A)x}_2$, where $J$ is the $n\times n$ all ones matrix. Finally, noting that $A \mapsto J - A$ is a bijection from $\mc{M}_{n,d}$ to $\mc{M}_{n,n-d}$ justifies the claim. 
\subsection{Compressibility, Almost-Constancy, and Robust Combinatorial Structures}\label{sub:robust}
We will make use of the decomposition of the unit sphere, formalized by Rudelson and Vershynin \cite{rudelson2008littlewood}, into \emph{compressible} and \emph{incompressible} vectors.
\begin{definition}
Given $\delta,\rho \in (0,1)$, we define $\on{Comp}_{\delta,\rho}$ to be the subset of $\mb{S}^{N-1}$ which is within Euclidean distance $\rho$ of a $\delta N$-sparse vector (i.e. a vector in $\mb{R}^{N}$ with at most $\delta N$ non-zero coordinates). Let $\on{Incomp}_{\delta,\rho}$ be the remaining vectors in $\mb{S}^{N-1}$.

Further, let $\on{Incomp}_{\delta,\rho}^0$ be the set of vectors $v \in \on{Incomp}_{\delta, \rho}$ satisfying $\mbf{1}\cdot v = 0$, and similarly for $\on{Comp}_{\delta,\rho}^0$.

We also define $\on{Cons}_{\delta,\rho}$ to be the set of vectors $v\in\mb{R}^{N}$ for which there exists some $\lambda\in\mb{R}$ such that $|v_i-\lambda|<\rho\snorm{v}_2/\sqrt{N}$ for at least $(1-\delta)N$ coordinates $i\in[N]$. 

We will repeatedly use these notions for restrictions of vectors, in which case the implicit dimension is modified and understood accordingly.
\end{definition}
We record some useful consequences of these definitions. 

\begin{lemma}[Incompressible vectors are spread, {\cite[Lemma~3.4]{rudelson2008littlewood}}]\label{lem:incomp-spread}
Fix $\delta,\rho \in (0,1)$. There exist $\nu_i = \nu_i(\delta,\rho) > 0$ for $i\in[3]$ such that any $v\in\on{Incomp}_{\delta,\rho}$ has at least $\nu_1N$ coordinates $i\in[N]$ with $|v_i\sqrt{N}|\in[\nu_2,\nu_3]$.
\end{lemma}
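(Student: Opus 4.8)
The plan is to exhibit explicit constants and then argue by contradiction with incompressibility. Fix $v\in\on{Incomp}_{\delta,\rho}$ (so $\snorm{v}_2=1$) and set $\nu_3:=\sqrt{2/\delta}$, $\nu_2:=\rho/2$, and $\nu_1:=\delta/2$. I would partition $[N]$ into the set $L$ of \emph{large} coordinates, those with $|v_i|\ge\nu_3/\sqrt N$; the set $S$ of \emph{small} coordinates, those with $|v_i|\le\nu_2/\sqrt N$; and the set $M$ of \emph{medium} coordinates, i.e.\ those with $|v_i\sqrt N|\in(\nu_2,\nu_3)$. These three sets are genuinely disjoint because $\nu_2<1<\nu_3$, and every $i\in M$ satisfies $|v_i\sqrt N|\in[\nu_2,\nu_3]$, so it suffices to show $|M|\ge\nu_1 N$.

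First I would bound $|L|$ using only the unit-norm constraint: since $1=\snorm{v}_2^2\ge\sum_{i\in L}v_i^2\ge|L|\,\nu_3^2/N$, we get $|L|\le N/\nu_3^2=\delta N/2$. Next, let $w$ be the vector obtained from $v$ by zeroing out all coordinates in $S$. Then $\snorm{v-w}_2^2=\sum_{i\in S}v_i^2\le|S|\,\nu_2^2/N\le\nu_2^2=\rho^2/4<\rho^2$, so $w$ lies within Euclidean distance $\rho$ of $v$. Now suppose toward a contradiction that $|M|<\nu_1 N=\delta N/2$. Then $w$ is supported on $L\cup M$, a set of size at most $\delta N/2+\delta N/2=\delta N$, so $w$ is a $\delta N$-sparse vector within distance $\rho$ of $v$, contradicting $v\in\on{Incomp}_{\delta,\rho}$. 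Hence $|M|\ge\nu_1 N$, which gives the claim.

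This argument is entirely elementary and I do not expect any genuine obstacle; the only thing requiring care is the calibration of the constants. One needs $\nu_2<\rho$ so that discarding \emph{every} small coordinate still moves $v$ by strictly less than $\rho$ in $\ell^2$, and one needs $\nu_3$ large enough in terms of $\delta$ (here $\nu_3^2\ge 2/\delta$) so that the crude bound $|L|\le N/\nu_3^2$ leaves a budget of at least $\delta N/2$ coordinates available to be medium after the sparsity threshold $\delta N$ is accounted for.
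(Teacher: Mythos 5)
Your proof is correct. The paper does not reprove this statement — it is cited directly from Rudelson--Vershynin \cite[Lemma~3.4]{rudelson2008littlewood} — so the right comparison is with the original argument there. Both you and Rudelson--Vershynin start the same way: bound the number of ``large'' coordinates (those of magnitude at least $\nu_3/\sqrt N$) by roughly $\delta N/2$ using the unit-norm constraint. The difference is in how incompressibility is deployed afterward. Rudelson--Vershynin project $v$ onto the large-coordinate set, note this projection is $\delta N$-sparse so incompressibility forces $\|P_{L^c}v\|_2>\rho$, and then do an $\ell^2$ mass-budgeting argument: the small coordinates contribute at most $\rho^2/2$ to this, so the medium coordinates contribute at least $\rho^2/2$, and since each medium coordinate contributes at most $1/(\delta N)$ this gives $|M|\gtrsim \rho^2\delta N$. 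You instead run a coordinate-counting (essentially $\ell^0$) contradiction: assume $|M|$ is small, zero out the small coordinates, and observe the result is a $\delta N$-sparse vector within distance $\rho$ of $v$. Your version is a bit cleaner and yields the stronger constant $\nu_1=\delta/2$ (independent of $\rho$), at the cost of a slightly smaller $\nu_2=\rho/2$ versus $\rho/\sqrt 2$; for the purposes of this paper, where only the existence of such constants matters, the two are interchangeable.
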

The following corollary shows that any vector in $\on{Incomp}_{\delta, \rho}^{0}$ has many positive \emph{and} negative coordinates of size $1/\sqrt{N}$. 
\begin{corollary}[Incompressible sum-zero vectors are bi-spread]\label{cor:incomp-signed-spread}
Fix $\delta,\rho \in (0,1)$. There exist $\nu_i = \nu_i(\delta,\rho) > 0$ for $i\in[3]$ such that any $v\in\on{Incomp}_{\delta,\rho}^0$ has at least $\nu_1N$ coordinates $i\in[N]$ with $v_i\sqrt{N}\in[\nu_2,\nu_3]$, and at least $\nu_1 N$ coordinates $j\in [N]$ with $v_j \sqrt{N} \in [-\nu_3,-\nu_2]$.
\end{corollary}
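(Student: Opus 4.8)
The plan is to bootstrap from \cref{lem:incomp-spread}, which already hands us a linear set $I$ of coordinates with $|v_i|\sqrt N\in[\nu_2,\nu_3]$; the only thing missing is sign information, and this is precisely where the constraint $\mbf{1}\cdot v=0$ must be used (note that $-v$ is also in $\on{Incomp}_{\delta,\rho}^0$, so applying the lemma to $-v$ recovers the \emph{same} set $I$ and gives nothing new). First I would observe, by pigeonhole, that at least half the coordinates of $I$ share a common sign, say positive (a negative majority is handled by the mirror-image argument, swapping the roles of the positive and negative parts of $v$ throughout). This already gives the first assertion of the corollary, and moreover forces the positive part of $v$ to carry $\ell^1$-mass $s:=\sum_{v_i>0}v_i\ge(\nu_1\nu_2/2)\sqrt N=:c\sqrt N$. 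Since $\mbf{1}\cdot v=0$, the negative part carries exactly the same mass $s$, and the whole game reduces to showing this negative mass cannot avoid the scale $1/\sqrt N$.

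The key step is a two-sided truncation of the negative coordinates. Write $N^-=\{i:v_i<0\}$ and split it by whether $|v_i|$ lies below $(c/4)/\sqrt N$, above $(4/c)/\sqrt N$, or in between. The low part contributes at most $N\cdot(c/4)/\sqrt N=(c/4)\sqrt N$ to $s$ by a trivial count; the high part has at most $Nc^2/16$ coordinates (else $\snorm{v}_2>1$), hence contributes at most $(c/4)\sqrt N$ to $s$ by Cauchy--Schwarz together with $\snorm{v}_2=1$. So the middle range carries at least $s/2\ge(c/2)\sqrt N$ of the mass, and since each coordinate there has magnitude at most $(4/c)/\sqrt N$, there are at least $(c^2/8)N$ of them, each with $v_j\sqrt N\in[-4/c,-c/4]$. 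Taking $\nu_3^{\mathrm{new}}:=\max(\nu_3,4/c)$, $\nu_2^{\mathrm{new}}:=\min(\nu_2,c/4)$, and $\nu_1^{\mathrm{new}}:=\min(\nu_1/2,c^2/8)$ then yields all three required constants, depending only on $\delta,\rho$. (One harmless normalization: we may assume the $\nu_2$ produced by \cref{lem:incomp-spread} is at most $1$, so that $c<1$ and the thresholds $(c/4)/\sqrt N<(4/c)/\sqrt N$ are correctly ordered.)

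The one subtlety — the only place where \emph{incompressibility}, rather than just $\mbf{1}\cdot v=0$ and $\snorm{v}_2=1$, does any work — is ruling out the two degenerate ways the opposite-sign mass could hide: concentrated on a few coordinates of magnitude $\Theta(1)$, or smeared across all $N$ coordinates at magnitude $o(1/\sqrt N)$. The first is excluded by the $\ell^2$ bound, the second by the crude count above, and it is the interplay of these two that is the main thing to get right; \cref{lem:incomp-spread} itself is invoked only to seed the lower bound $s\ge c\sqrt N$ on the mass that must be redistributed. Everything else is bookkeeping of constants.
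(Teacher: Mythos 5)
Your proposal is correct and follows essentially the same route as the paper's proof: invoke \cref{lem:incomp-spread}, pigeonhole the typical-size coordinates to a dominant sign, use $\mathbf{1}\cdot v=0$ to transfer the resulting $\Omega(\sqrt{N})$ of $\ell^1$-mass to the opposite-sign coordinates, and then locate that mass in a $\Theta(1/\sqrt N)$ window by cutting off the tail below with a crude count over $N$ coordinates and the tail above with $\|v\|_2=1$ plus Cauchy--Schwarz. The only differences from the paper are unimportant choices of truncation constants (the paper uses a high cutoff of $2/c$ where you use $4/c$, with $c=\mu_1\mu_2/2$), which lead to the same final bound $\gtrsim c^2 N$ on the count.
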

\begin{remark}
In particular, this shows that $\on{Incomp}_{\delta, \rho}^{0}\cap \on{Cons}_{\delta',\rho'} = \emptyset$ for $\delta', \rho' \ll \delta, \rho$. 
\end{remark}
\begin{proof}
By \cref{lem:incomp-spread}, there exist $\mu_1,\mu_2,\mu_3 > 0$ such that any $v\in\on{Incomp}_{\delta,\rho}^0$ has at least $\mu_1N$ indices $i \in [N]$ with $|v_i\sqrt{N}|\in[\mu_2,\mu_3]$. For a given $v \in \on{Incomp}_{\delta, \rho}^{0}$, assume without loss of generality that at least $\mu_1N/2$ of these are positive coordinates. In particular, the sum of the positive coordinates of $v$ is at least $\mu_1N/2\cdot\mu_2/\sqrt{N} = (\mu_1\mu_2/2)\sqrt{N}$. 

Since $\sum_i v_i = 0$ by definition, it follows that the sum of the negative coordinates of $v$ is also at least $(\mu_1\mu_2/2)\sqrt{N}$ in magnitude. Moreover, since $\|v\|_{2} = 1$, it follows that there are at most $(\mu_1^{2}\mu_2^{2}/16) N$ coordinates with value at most $- 4/(\mu_1 \mu_2 \sqrt{N})$; by Cauchy-Schwarz, the sum of the magnitudes of these coordinates is at most $(\mu_1\mu_2/4)\sqrt{N}$. Hence, the sum of the magnitudes of the coordinates which are contained in the interval $[-4/(\mu_1\mu_2\sqrt{N}),0]$ is at least $(\mu_1\mu_2/4)\sqrt{N}$. Finally, the sum of coordinates in $[-\mu_1\mu_2/(8\sqrt{N}),0]$ is at most $(\mu_1\mu_2/8)\sqrt{N}$ in magnitude, so that the sum of the coordinates in $[-4/(\mu_1\mu_2\sqrt{N}),-\mu_1\mu_2/(8\sqrt{N})]$ is at least $(\mu_1\mu_2/8)\sqrt{N}$ in magnitude. In particular, there are at least $(\mu_1^{2} \mu_{2}^{2}/32) N$ such coordinates. 

Finally, taking $\nu_1 = \min\{\mu_{1}/2, \mu_{1}^{2}\mu_{2}^{2}/32\}$, $\nu_{2} = \min\{(\mu_1 \mu_{2})/8, \mu_{2}\}$ and $\nu_{3} = \max\{\mu_{3}, 4/(\mu_1\mu_2)\}$ gives the desired conclusion.
% Adjusting $\nu_1,\nu_2,\nu_3$ accordingly gives the result.
\end{proof}

We will also use the existence of `robust splittings and matchings' of the set of coordinates $[N]$. In particular, given $\delta, \rho \in (0,1)$, we find a fixed (universal) system of $O_{\delta,\rho}(1)$ different pairs $(\sigma,S) \in \mf{S}_{[N]} \times \binom{[N]}{N/2}$ with the property that any $v \in \on{Incomp}_{\delta, \rho}^{0}$ has many of its `typical size' positive and negative elements in both $S$ and $[n]\setminus S$, and moreover, has many coordinates in consecutive positions $\sigma(i),\sigma(i+1)$ differing by order at least $1/\sqrt{N}$.
In fact, as we will see, a suitably chosen random family of pairs works well, and the justification of this fact uses no facts about sum-zero incompressible vectors except for \cref{cor:incomp-signed-spread}.

We first define the necessary events. 
\begin{definition}
\label{def:I}
Given $w\in\mb{S}^{N-1}$, $\sigma \in \mf{S}_{[N]}$, and a 3-tuple $\nu = (\nu_1, \nu_2, \nu_3) \in \mb{R}^{3}$ with $\nu_1,\nu_2,\nu_3 > 0$, we say that the event $\mc{I}_\nu(w,\sigma)$ holds if there are at least $\nu_1N$ indices $i\in[N-1]$ with $|w_{\sigma(i)}-w_{\sigma(i+1)}|\sqrt{N}\ge\nu_2$.
\end{definition}
\begin{definition}
\label{def:J}
Given $v\in\mb{S}^{N-1}$, $S\subseteq[N]$, and a 3-tuple $\nu = (\nu_1, \nu_2, \nu_3) \in \mb{R}^{3}$ with $\nu_1,\nu_2,\nu_3 > 0$, we say that the event $\mc{J}_\nu(v,S)$ holds if
\begin{enumerate}[1.]
\item there are at least $\nu_1N$ indices $i\in S$ and at least $\nu_1 N$ indices $j \in S^{c}$ with $v_i\sqrt{N}, v_j \sqrt{N}\in[\nu_2,\nu_3]$, and
\item there are at least $\nu_1N$ indices $i\in S$ and at least $\nu_1 N$ indices $j \in S^{c}$ with $v_i\sqrt{N}, v_j \sqrt{N}\in [-\nu_3,-\nu_2]$.
\end{enumerate}
\end{definition}
\begin{lemma}[A constant-sized universal family of robust combinatorial structures]\label{lem:robust-split-matching}
Fix $\delta,\rho \in (0,1)$. There exist $\nu_i(\delta,\rho) > 0$ for $i\in[3]$ and there is a family $\mc{R}_{\delta,\rho}$ of size $m_{\delta,\rho}$ of $(\sigma,S) \in \mf{S}_{[N]}\times \binom{[N]}{N/2}$  such that the following holds: for any $w,v\in\on{Incomp}_{\delta,\rho}^0$ there is $(\sigma,S)\in\mc{R}_{\delta,\rho}$ such that $\mc{I}_\nu(w,\sigma)$ and $\mc{J}_\nu(v,S)$ hold.
\end{lemma}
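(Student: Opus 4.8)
The plan is to produce $\mc{R}_{\delta,\rho}$ by a probabilistic argument: sample $m = m_{\delta,\rho}$ pairs $(\sigma,S)$ independently and uniformly at random from $\mf{S}_{[N]}\times\binom{[N]}{N/2}$, and show that with positive probability this family has the desired universality property. The key point is that even though $\on{Incomp}_{\delta,\rho}^0$ is infinite, the events $\mc{I}_\nu(w,\sigma)$ and $\mc{J}_\nu(v,S)$ depend on $w$ and $v$ only through coarse combinatorial data, so a net/counting argument will let us reduce to finitely many ``test'' inputs. Concretely, I would first fix the output $\nu = (\nu_1,\nu_2,\nu_3)$ coming from \cref{cor:incomp-signed-spread} applied with parameters $\delta,\rho$ (possibly shrinking $\nu_1$ and $\nu_2$ and enlarging $\nu_3$ by constant factors along the way), so that every $v\in\on{Incomp}_{\delta,\rho}^0$ has $\ge \nu_1' N$ coordinates with $v_i\sqrt N\in[\nu_2',\nu_3']$ and $\ge\nu_1' N$ coordinates with $v_i\sqrt N\in[-\nu_3',-\nu_2']$, and similarly for $w$.

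The core estimate is the following: for a \emph{fixed} $v\in\on{Incomp}_{\delta,\rho}^0$ and a uniformly random $S\in\binom{[N]}{N/2}$, the event $\mc{J}_\nu(v,S)$ holds with probability at least $1-e^{-c N}$ for a constant $c = c(\delta,\rho)>0$. Indeed, let $P^+$ be the set of $\ge\nu_1'N$ ``good positive'' coordinates and $P^-$ the set of good negative ones; a uniformly random half-set $S$ contains, by a hypergeometric tail bound (e.g.\ Chernoff/Hoeffding for sampling without replacement), at least $\frac14\nu_1' N$ elements of $P^+$ in $S$ and at least $\frac14\nu_1'N$ in $S^c$, and likewise for $P^-$, except with probability $e^{-cN}$. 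Taking $\nu_1 \mathrel{:=} \nu_1'/4$ (and renaming) gives $\mb{P}[\mc{J}_\nu(v,S)]\ge 1-e^{-cN}$. An analogous argument handles $\mc{I}_\nu$: for fixed $w\in\on{Incomp}_{\delta,\rho}^0$ with $\ge\nu_1'N$ coordinates of size in $[\nu_2',\nu_3']/\sqrt N$ and $\ge\nu_1'N$ of size in $[-\nu_3',-\nu_2']/\sqrt N$, a random permutation $\sigma$ places linearly many ``$+$ good'' coordinates immediately followed by a ``$-$ good'' coordinate (so that $|w_{\sigma(i)}-w_{\sigma(i+1)}|\sqrt N\ge 2\nu_2'$), again except with probability $e^{-cN}$; this is a standard second-moment or Azuma-type computation on the random matching of consecutive positions. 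Hence for a random $(\sigma,S)$ and fixed $(w,v)$, both events hold simultaneously with probability $\ge 1 - 2e^{-cN}$, so the probability that \emph{none} of the $m$ independent samples works for this fixed $(w,v)$ is at most $(2e^{-cN})^m$.

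Finally I would discretize. The event $\mc{I}_\nu(w,\sigma)\wedge\mc{J}_\nu(v,S)$ is monotone in the sense that it only gets easier if we enlarge the index sets of ``good'' coordinates; and whether a given $(\sigma,S)$ works for $(w,v)$ depends only on the sign pattern of $w$ and $v$ together with which coordinates are ``typical-sized,'' i.e.\ on a partition of $[N]$ into $O(1)$ classes. Thus it suffices to verify the property on one representative per such combinatorial type, and there are at most $K^N$ types for some constant $K$. Choosing $m = m_{\delta,\rho}$ large enough that $(2e^{-cN})^m \cdot K^{2N} < 1$ — e.g.\ $m > 2\log K / c$ — a union bound over all (pairs of) types shows the random family $\mc{R}_{\delta,\rho}$ is universal with positive probability, hence such a family exists; its size $m_{\delta,\rho}$ depends only on $\delta,\rho$ as required. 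I expect the main technical obstacle to be the clean formulation of the ``combinatorial type'' reduction — one must make sure that the finitely many types genuinely capture the events $\mc{I}_\nu,\mc{J}_\nu$ for \emph{all} incompressible sum-zero vectors (using \cref{cor:incomp-signed-spread} to guarantee each type's representative is itself a legitimate test case with the stated abundance of good coordinates), and to track the constant factors lost when passing from $\nu'$ to $\nu$ so that a single triple $\nu$ works uniformly.
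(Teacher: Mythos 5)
Your proposal is correct and follows essentially the same probabilistic-method route as the paper: sample a constant number of random $(\sigma,S)$'s, show each fails for a fixed $(w,v)$ with probability $e^{-\Omega(N)}$, and union-bound over an exponential but finite number of combinatorial representatives supplied by \cref{cor:incomp-signed-spread}. The paper makes the discretization you worry about cleaner by parametrizing not by sign patterns of vectors but by pairs of disjoint sets $T_1,T_2\subset[N]$ of size exactly $\nu_1'N$ (standing for trimmed good-positive and good-negative coordinate sets), and it also builds separate families for $S$ and $\sigma$ before taking their product, but these are presentation choices, not conceptual differences. One small caution: your parenthetical ``second-moment'' alternative for the permutation part is not enough on its own, since Chebyshev gives only a polynomial tail; you need the Azuma/Lipschitz-on-$\mf{S}_{[N]}$ concentration (as the paper invokes) to get the $e^{-cN}$ bound that survives the union over exponentially many types.
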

\begin{proof}
We will separately construct a family of $S\in\binom{[N]}{N/2}$ and a family of $\sigma\in\mf{S}_{[N]}$ with the desired properties. Then, simply taking all pairs $(\sigma,S)$ clearly satisfies the desired conclusion.

First, we find a family of sets $S$. Let $\nu_1',\nu_2',\nu_3' > 0$ be as in \cref{cor:incomp-signed-spread}. Consider $m_1$ sets chosen uniformly and independently from among all subsets of $[N]$ of size $N/2$. Denote this random collection of subsets by $\mc{R}_1$. Note that for any fixed pair of disjoint subsets $T_1,T_2 \subseteq [N]$ with $|T_1| = |T_2| = \nu_1'N$, a subset $S$ chosen uniformly from $\binom{[N]}{N/2}$ has each of $S\cap T_i$ and $S^c\cap T_i$ of size at least $\nu_1'N/3$ with probability $1-\exp(-\Omega_{\nu_1'}(N))$. Therefore, taking $m_1$ sufficiently large (in terms of $\nu_1'$, which in turn depends on $\delta,\rho$) and taking a union bound over pairs of disjoint subsets $T_1,T_2 \subseteq [N]$ with $|T_1| = |T_2| = \nu_1' N$, we find that there is a fixed family $\mc{R}_1$ of size $m_1$ with the following property: for any pair of disjoint subsets $T_1,T_2 \subseteq [N]$ with $|T_1| = |T_2| = \nu_1'N$, there is $S\in\mc{R}_1$ with each of $S\cap T_i$ and $S^c\cap T_i$ of size at least $\nu_1'N/3$. Now, since any $v\in\on{Incomp}^0_{\delta,\rho}$ has at least $\nu_1'N$ positive and negative elements of the correct size (by \cref{cor:incomp-signed-spread}), we see that for any $v \in \on{Incomp}_{\delta, \rho}^{0}$, there exists $S \in \mc{R}_{1}$ such that $\mc{J}_{\nu}(v,S)$ holds (for $\nu = (\nu'_1/3, \nu'_2, \nu'_3)$).

Next, we find a family of permutations $\sigma$. It suffices to show that there is a fixed family of permutations of $[N]$, $\mc{R}_{2}$, of size $m_2 = m_2(\delta, \rho)$ with the following property: for any pair of disjoint subsets $T_1, T_2 \subseteq [N]$ with $|T_1| = |T_2| = \nu_1' N$, there is $\sigma \in \mc{R}_{2}$ with $\sigma(i) \in T_1$ and $\sigma(i+1) \in T_2$ for at least $c(\nu_1')N$ indices $i \in [N-1]\cap (2\mb{Z}+1)$. Then, since for any $v\in \on{Incomp}_{\delta, \rho}^{0}$, any value among the $\nu_1' N$ positive elements with magnitude at least $\nu_2'/ \sqrt{N}$ differs from any value among the $\nu_1' N$ negative elements of magnitude at least $\nu_{2}'/\sqrt{N}$ by at least $2\nu_{2}'/\sqrt{N}$, we will get the desired conclusion (for $\nu = (c(\nu_1'), \nu_2', \nu_3')$). As before, it suffices to show that for a fixed pair of disjoint subsets $T_1, T_2 \subseteq [N]$ with $|T_1| = |T_2| = \nu_1' N$, the probability that a uniformly random permutation $\sigma$ satisfies $\sigma(i)\in T_1$ and $\sigma(i+1)\in T_2$ for at least $c(\nu_1')N$ indices $i\in [N-1]\cap (2\mb{Z} + 1)$ is at least $1 - \exp(-\Omega_{\nu_1'}(N))$. To see this, let $f\colon \mf{S}_{[N]}\to \mb{R}$ denote the number of such indices. Then, it follows from the linearity of expectation that $\mb{E}[f] \geq (\nu_{1}')^{2}\cdot (N-1)/2$. Moreover, it is clear that $f$ is at most $2$-Lipschitz with respect to the normalized Hamming distance on $\mf{S}_{[N]}$. Therefore, by the concentration of Lipschitz functions on the symmetric group (cf. \cite[Theorem~5.2.6]{vershynin2018high}), it follows that $\mb{P}[f \geq (\nu_1')^{2}\cdot (N-1)/4] \geq 1 - \exp(-\Omega_{\nu_1'}(N))$, as desired.  \qedhere
% It suffices to prove, throwing away half of the pairs $(i,i+1)$, that we have a family of matchings which, for any two disjoint sets $T_1,T_2$ of size $\nu_1N$, has at least $\nu_1^2N^2$ cross-edges. In fact, it suffices to show that for two fixed such sets $T_1,T_2$, a single random matching has exponential concentration around its mean number of cross-edges between $T_1,T_2$ (since the mean is easily verified to be near $2\nu_1^2N^2$). Then we can boost this probability and union-bound after taking a large family as before.

% To prove this concentration claim, we use Azuma--Hoeffding. An equivalent model is to fix a matching and then reveal whether a vertex is in $T_1$, $T_2$, or neither according to a uniform distribution on partitions of size $\nu_1n,\nu_1n,(1-2\nu_1)n$. We reveal a pair of vertices (across each edge) at a time, and consider the associated Doob martingale. Note that at every step, we reveal at most $1$ crossing edge, and therefore it suffices to check that the expectation given this edge of the number of crossings in the remaining set does not change by more than a constant. To see this, suppose there were $a,b,c$ vertices left in $T_1$, $T_2$, and the remainder. The expectations of the number of cross-edges is
% \[\frac{ab}{\binom{a+b+c}{2}}\cdot\frac{a+b+c}{2} = \frac{ab}{a+b+c-1}.\]
% Revealing an edge changes $a,b,c$ via decreasing $a+b+c$ by $2$, with each of $a,b,c$ changing by at most $2$ in the obvious way. It is easy to verify the discrepancy between the two is constant-order no matter what, and Azuma--Hoeffding immediately finishes.
\end{proof}

\subsection{Combinatorial LCD}\label{sub:CLCD}
For quantifying the arithmetic structure of vectors, it will be convenient to use the notion of combinatorial least common denominator (CLCD), recently introduced by Tran \cite{Tra20} in his work on the least singular value of random zero/one matrices, each of whose rows sums to $n/2$.
\begin{definition}
[Combinatorial Least Common Denominator (CLCD), {\cite[Definition~1.4]{Tra20}}]
\label{def:CLCD}
For a vector $v\in\mb{R}^N$, $\gamma \in (0,1)$, and $\alpha > 0$, we define
\[\on{CLCD}_{\alpha,\gamma}(v) = \on{LCD}_{\alpha,\gamma}(D(v)) = \inf\{\theta > 0: \on{dist}(\theta D(v),\mb{Z}^{\binom{N}{2}}) < \min(\gamma|\theta D(v)|,\alpha)\},\]
where $D(v)$ is the vector in $\mb{R}^{\binom{N}{2}}$ with coordinates $v_i-v_j$ for $i < j$.
\end{definition}
\begin{remark}
We will take $\gamma\in(0,1)$ of constant order and $\alpha$ of order linear in $N$, in a similar manner to Tran \cite{Tra20}. For `typical' vectors, the CLCD will be at least $\sqrt{N}$ in size as with the usual LCD; see \cref{lem:lower-CLCD}. Also, note that scaling a vector down by a multiplicative factor will scale the CLCD up by the same factor.
\end{remark}
In order to state the key property of CLCD, we first define the L{\'e}vy concentration function of a random variable $X$.
\begin{definition}\label{def:levy-concentration}
For a random variable $X$ and $\epsilon \geq 0$, the L{\'e}vy concentration of $X$ of width $\epsilon$ is 
\[\mc{L}(X,\epsilon) = \sup_{x\in\mb{R}}\mb{P}[|X-x| < \epsilon].\]
\end{definition}

The key properties of the CLCD (analogous to standard properties of the LCD from \cite{rudelson2008littlewood}) are the following results from Tran \cite{Tra20}. 
\begin{definition}
\label{def:W}
Given a vector $v \in \mb{R}^{N}$ and $t \in [N]$, we define the random variable $W_{t,v}$ as $W_{t,v}:= \sum_{i=1}^N b_iv_i$, 
where $b = (b_1,\dots, b_N)$ is a uniformly random vector on the $\{0,1\}$-Boolean hypercube summing to $t$.
\end{definition}
\begin{lemma}[Anti-concentration via CLCD]\label{lem:levy-concentration-CLCD}
For any $a > 0$ and $\gamma\in(0,1)$, there exists $C = C(a, \gamma)$ depending only on $a,\gamma$ for which the following holds. Let $v\in\mb{R}^N$ with $\snorm{D(v)}_2\ge a\sqrt{N/(t(1-t))}$. Then, for every $\alpha > 0$ and $\epsilon\ge 0$,
\[\mc{L}(W_{tN,v},\epsilon)\le C\epsilon + \frac{C}{\on{CLCD}_{\alpha,\gamma}(v)} + Ce^{-8t(1-t)\alpha^2/N}.\]
\end{lemma}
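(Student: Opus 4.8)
\emph{Strategy.} The plan is to combine Esseen's concentration inequality with a symmetrization that rewrites the ``slice'' variable $W_{tN,v}$ as a Rademacher sum indexed by a random matching of the coordinates, and then to run the usual $\on{LCD}$-based anti-concentration for such sums --- with the role of the coefficient vector played by a \emph{random restriction} of $D(v)$. This is precisely why the correct notion of arithmetic structure here is $\on{CLCD}_{\alpha,\gamma}(v)=\on{LCD}_{\alpha,\gamma}(D(v))$, which packages all pairwise differences $v_i-v_j$ at once.

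\emph{Reduction to a random matching.} First I would generate $b$ by drawing a uniform permutation $\pi\in\mf{S}_{[N]}$ and placing the ones at positions $\pi(1),\dots,\pi(tN)$. Pairing ``in'' positions with ``out'' positions through $\pi$ (say position $tN-j+1$ with position $tN+j$ for $1\le j\le m:=\min(t,1-t)N$, the remaining always-in and always-out positions being irrelevant) yields, after revealing only the unordered pairs, a uniformly random partial matching $M$ on $[N]$ with $|M|=m$ for which, conditionally, $W_{tN,v}=c(M)+\sum_{\{i,j\}\in M}\xi_{ij}(v_i-v_j)/2$ with $c(M)$ a constant and the $\xi_{ij}$ independent Rademacher. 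Hence $\mc{L}(W_{tN,v},\epsilon)\le\mathbb{E}_M\,\mc{L}\big(\sum_{\{i,j\}\in M}\xi_{ij}(v_i-v_j)/2,\ \epsilon\big)$.

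\emph{Per-matching estimate.} For a fixed $M$, Esseen's inequality bounds the inner Levy concentration by $C\epsilon\int_{-1/\epsilon}^{1/\epsilon}\prod_{\{i,j\}\in M}\lvert\cos(\theta(v_i-v_j)/2)\rvert\,d\theta$; the elementary bound $\lvert\cos(\pi x)\rvert\le\exp(-c\,\on{dist}(x,\mb{Z})^2)$ turns the integrand into $\exp(-c\,\on{dist}(\theta' D(v)|_M,\mb{Z}^{|M|})^2)$ for a rescaled variable $\theta'$, where $D(v)|_M$ denotes the subvector $(v_i-v_j)_{\{i,j\}\in M}$ of $D(v)$. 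Splitting the $\theta$-range dyadically and invoking the definition of $\on{LCD}_{\alpha,\gamma}$ exactly as in Rudelson--Vershynin then gives a per-matching bound of the shape $C\epsilon/\snorm{D(v)|_M}_2 + C/\big(\snorm{D(v)|_M}_2\cdot\on{LCD}_{\alpha,\gamma}(D(v)|_M)\big) + Ce^{-c\alpha^2}$, the three terms being the Gaussian main term (variance $\asymp\snorm{D(v)|_M}_2^2$), the arithmetic term, and the $\alpha$-truncation.

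\emph{Averaging, and the main obstacle.} It then remains to average over $M$. Since $M$ is a uniform partial matching of size $m\asymp t(1-t)N$, linearity gives $\mathbb{E}_M\snorm{D(v)|_M}_2^2=\frac{|M|}{\binom N2}\snorm{D(v)}_2^2\asymp\frac{t(1-t)}{N}\snorm{D(v)}_2^2\gtrsim a^2$ by hypothesis, and a bounded-differences inequality for functions of a random matching shows $\snorm{D(v)|_M}_2^2\gtrsim a^2$ outside an event of probability $e^{-\Omega(N)}$; the same concentration, applied to $\on{dist}(\theta' D(v)|_M,\mb{Z})^2$ around its mean $\frac{|M|}{\binom N2}\on{dist}(\theta' D(v),\mb{Z})^2$, is what converts the regime $\on{dist}(\theta'D(v),\mb{Z})\ge\alpha$ (valid for $\theta'$ below $\on{CLCD}_{\alpha,\gamma}(v)$) into an exponential term of the advertised form $e^{-\Theta(t(1-t)\alpha^2/N)}$. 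The genuinely delicate step --- the one I expect to be the main obstacle --- is transferring the \emph{arithmetic} information: a random matching meets only a $\Theta(1/N)$ fraction of the $\binom N2$ pairwise differences, so one must show that for all but an $e^{-\Omega(N)}$ fraction of matchings $M$ one has $\on{LCD}_{\alpha,\gamma}(D(v)|_M)\gtrsim\on{CLCD}_{\alpha,\gamma}(v)$ (possibly with slightly adjusted parameters). I would do this by a union bound over dyadic scales $\theta'\in[2^k,2^{k+1})$ below $\on{CLCD}_{\alpha,\gamma}(v)$: if a non-negligible fraction of matchings placed $\theta'D(v)|_M$ within $\min(\gamma\snorm{\theta'D(v)|_M}_2,\alpha)$ of the integer lattice, then the set of pairs $\{i,j\}$ for which $\on{dist}(\theta'(v_i-v_j),\mb{Z})$ is small would have to be so large, with total defect so small, that $\theta'D(v)$ itself would lie within $\min(\gamma\snorm{\theta'D(v)}_2,\alpha)$ of $\mb{Z}^{\binom N2}$, contradicting $\theta'<\on{CLCD}_{\alpha,\gamma}(v)$. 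Once this transfer is established, the rest is a routine assembly of Esseen's inequality, the cosine estimate, and concentration of measure on the symmetric group.
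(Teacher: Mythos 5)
The paper does not give a self-contained proof of this lemma; it cites \cite[Theorem~3.2]{Tra20}, so there is no in-paper argument to compare your reconstruction against. Your reduction via a random partial matching is a correct symmetrization: conditionally on the unordered pairs, the residual randomness is indeed a product of Rademacher signs, and the per-matching Esseen computation is the standard Rudelson--Vershynin estimate. The genuine gap is in the averaging step, and it is more serious than you acknowledge. Your assertion that bounded differences on random matchings gives $\snorm{D(v)|_M}_2^2\gtrsim a^2$ outside an $e^{-\Omega(N)}$-probability event is false whenever $D(v)$ is dominated by a few coordinates. For example, take $t=1/3$ and $v=(1/\sqrt2,-1/\sqrt2,0,\ldots,0)$, which has $\snorm{D(v)}_2=\sqrt N$ and hence satisfies the hypothesis with $a$ a small constant. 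Here $m=\min(t,1-t)N=N/3$, and a random partial matching of that size avoids both vertices $1$ and $2$ with probability $(1-2m/N)^2+o(1)=1/9+o(1)$; on that event $D(v)|_M=0$ and the per-matching L\'evy concentration is identically $1$. More generally, bounded differences gives a tail of order $\exp\big(-\Omega(s^2/(m\,\max_e D(v)_e^4))\big)$ for deviation $s$, which is $e^{-\Omega(N)}$ only if $\max_e|D(v)_e|\lesssim N^{-1/2}$, i.e.\ only if $v$ is already spread. The arithmetic-transfer step you flag as the main obstacle has the same defect: $\sum_{e\in M}\on{dist}(\theta'(v_{i_e}-v_{j_e}),\mb Z)^2$ is again a sum of terms that need not be uniformly small, so bounded differences does not give $e^{-\Omega(N)}$ concentration in general. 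Also, the ``slightly adjusted parameters'' hedge conceals a rescaling of $\alpha$ by a factor of order $\sqrt{t(1-t)/N}$ (this is exactly what produces the exponent $8t(1-t)\alpha^2/N$), which is not slight.

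What rescues the lemma for the example above is that $\on{CLCD}_{\alpha,\gamma}(v)=O_\gamma(1)$ when $v$ is supported on $O(1)$ coordinates, so the term $C/\on{CLCD}_{\alpha,\gamma}(v)$ already makes the claimed bound vacuous. A proof along your lines would therefore need an explicit dichotomy: either the norms $\snorm{D(v)|_M}_2$ (and the restricted lattice distances) fail to concentrate, in which case one must show $\on{CLCD}_{\alpha,\gamma}(v)$ is itself $O_{a,\gamma}(1)$ and there is nothing to prove, or else the matching quantities concentrate and the averaging goes through. As written, the ``routine assembly'' at the end would need this case analysis spelled out --- it is the crux of the argument, not a postscript.
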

\begin{proof}
This follows from \cite[Theorem~3.2]{Tra20} in the same way as \cite[Theorem~1.5]{Tra20}.
\end{proof}

The next lemma provides a useful lower bound on the CLCD of vectors which are not almost-constant. 
%We also recall some elementary properties about the typical size of the CLCD on unstructured vectors.
\begin{lemma}[Non almost-constant vectors have large CLCD, {\cite[Lemma~2.15]{Tra20}}]\label{lem:lower-CLCD}
Let $\delta,\rho\in(0,1)$ and let $v\in\mb{R}^{N-1}\setminus\on{Cons}_{\delta,\rho}$. Then for every $\alpha > 0$ and every $\gamma\in(0,\delta\rho/12)$, we have
\[\on{CLCD}_{\alpha,\gamma}(v)\ge\frac{1}{7\snorm{v}_2}\sqrt{\delta N}.\]
\end{lemma}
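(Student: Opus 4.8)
Let $\delta,\rho\in(0,1)$ and $v\in\mb{R}^{N-1}\setminus\on{Cons}_{\delta,\rho}$. Then for every $\alpha>0$ and every $\gamma\in(0,\delta\rho/12)$ one has $\on{CLCD}_{\alpha,\gamma}(v)\ge \frac{1}{7\snorm{v}_2}\sqrt{\delta N}$.

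The plan is to run the usual ``spread vector $\Rightarrow$ large LCD'' mechanism, but on the difference vector $D(v)$ and with the role of incompressibility played by the hypothesis $v\notin\on{Cons}_{\delta,\rho}$. Write $m:=N-1$. I would first assume $v\neq 0$ (otherwise $D(v)=0$ and $\on{CLCD}_{\alpha,\gamma}(v)=+\infty$, so there is nothing to prove), and dispose of small $N$ by a crude bound: for any $\theta<\tfrac12\snorm{D(v)}_\infty^{-1}$, the vector $\theta D(v)$ lies in the open cube $(-\tfrac12,\tfrac12)^{\binom m2}$, so $\on{dist}(\theta D(v),\mb{Z}^{\binom m2})=\snorm{\theta D(v)}_2\ge\gamma\snorm{\theta D(v)}_2\ge\min(\gamma\snorm{\theta D(v)}_2,\alpha)$, using $\gamma<1$ and $D(v)\neq 0$ (a nonzero constant vector would lie in $\on{Cons}_{\delta,\rho}$, and $v\neq 0$); hence no such $\theta$ is admissible and $\on{CLCD}_{\alpha,\gamma}(v)\ge\tfrac12\snorm{D(v)}_\infty^{-1}\ge\tfrac1{2\sqrt2}\snorm v_2^{-1}$ since $|v_i-v_j|^2\le 2\snorm v_2^2$. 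This already exceeds $\tfrac1{7\snorm v_2}\sqrt{\delta N}$ once $\delta N\le 49/8$, so henceforth I may take $N$ large in terms of $\delta$ (the threshold being of order $1/\delta$, which overlaps the range just handled). It then remains to show that every admissible scale, i.e.\ every $\theta>0$ with $\on{dist}(\theta D(v),\mb{Z}^{\binom m2})<\min(\gamma\snorm{\theta D(v)}_2,\alpha)$, satisfies $\theta\ge\tfrac1{7\snorm v_2}\sqrt{\delta N}$; the bound on $\on{CLCD}$ follows by taking the infimum, and $\alpha$ plays no role---it suffices to contradict $\on{dist}(\theta D(v),\mb{Z}^{\binom m2})<\gamma\snorm{\theta D(v)}_2$.

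The combinatorial heart is to extract from $v\notin\on{Cons}_{\delta,\rho}$ a large ``bi-clique'' of coordinate pairs with tightly controlled differences. By Markov applied to $\sum_i v_i^2=\snorm v_2^2$, the set $B:=\{i\in[m]:|v_i|>K\snorm v_2/\sqrt m\}$ has $|B|<m/K^2$; I would take $K=\Theta(1/\sqrt\delta)$ so that $|B|$ is a small fraction of $\delta m$. On $R:=[m]\setminus B$ every coordinate has magnitude at most $K\snorm v_2/\sqrt m$. List $\{v_i:i\in R\}$ in increasing order $w_1\le\dots\le w_M$ ($M=|R|$) and set $t:=\lceil\delta m/8\rceil$; for $N$ large and $K$ as above one has $M-2t\ge(1-\delta)m$. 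If $w_{M-t}-w_t<2\rho\snorm v_2/\sqrt m$, then the $\ge(1-\delta)m$ coordinates of rank in $(t,M-t]$ all lie within $\rho\snorm v_2/\sqrt m$ of the midpoint of $[w_t,w_{M-t}]$, contradicting $v\notin\on{Cons}_{\delta,\rho}$; hence $w_{M-t}-w_t\ge 2\rho\snorm v_2/\sqrt m$. Taking $G_1:=\{i\in R:v_i\le w_t\}$ and $G_2:=\{i\in R:v_i\ge w_{M-t}\}$ gives disjoint sets with $|G_1|,|G_2|\ge t=\Omega(\delta m)$ such that, for every $i\in G_1$ and $j\in G_2$,
\[\frac{2\rho\,\snorm v_2}{\sqrt m}\ \le\ |v_i-v_j|\ \le\ \frac{2K\snorm v_2}{\sqrt m}\ =\ \Theta\!\paren{\frac{\snorm v_2}{\sqrt{\delta m}}}.\]

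Now take an admissible $\theta$ and suppose for contradiction $\theta<\tfrac1{7\snorm v_2}\sqrt{\delta N}$. For each cross-pair $(i,j)\in G_1\times G_2$ the choice of $K$, together with the $N/(N-1)$ dimension correction (this is where the constant $7$ is used), forces $|\theta(v_i-v_j)|<\tfrac12$, so $\on{dist}(\theta(v_i-v_j),\mb{Z})=|\theta(v_i-v_j)|\ge 2\rho\theta\snorm v_2/\sqrt m$. Since distance to the lattice $\mb{Z}^{\binom m2}$ decomposes coordinatewise and the $|G_1||G_2|=\Omega(\delta^2 m^2)$ cross-pairs index distinct coordinates of $D(v)$,
\[\on{dist}\!\paren{\theta D(v),\mb{Z}^{\binom m2}}^2\ \ge\ |G_1||G_2|\cdot\frac{4\rho^2\theta^2\snorm v_2^2}{m}\ \gtrsim\ \delta^2\rho^2\,\theta^2\snorm v_2^2\,m.\]
On the other hand $\snorm{\theta D(v)}_2^2=\theta^2\snorm{D(v)}_2^2=\theta^2\bigl(m\snorm v_2^2-(\sum_i v_i)^2\bigr)\le\theta^2 m\snorm v_2^2$. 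Combining, $\on{dist}(\theta D(v),\mb{Z}^{\binom m2})\ge c\,\delta\rho\,\snorm{\theta D(v)}_2$ for an absolute $c$; with $K$, $t$ and the dimension corrections tuned carefully one gets $c=\tfrac14$. As $\gamma<\delta\rho/12<\delta\rho/4\le c\,\delta\rho$, this contradicts $\on{dist}(\theta D(v),\mb{Z}^{\binom m2})<\gamma\snorm{\theta D(v)}_2$. Hence $\theta\ge\tfrac1{7\snorm v_2}\sqrt{\delta N}$, which is the claim.

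The step I expect to be the real obstacle is the extraction in the second paragraph: turning the purely ``value-spread'' statement $v\notin\on{Cons}_{\delta,\rho}$ into a \emph{product} set $G_1\times G_2$ of $\Omega(\delta^2 N^2)$ pairs whose differences lie in an interval $[\,\Theta(\rho\snorm v_2/\sqrt N),\,\Theta(\snorm v_2/\sqrt{\delta N})\,]$ of \emph{bounded ratio}. Boundedness of the ratio---not merely of its lower end---is what makes the final comparison work with an $N$-independent threshold on $\gamma$: a dyadic decomposition of the differences would lose a factor $\log N$ and force $\gamma\lesssim\delta\rho/\sqrt{\log N}$, which is incompatible with $\gamma<\delta\rho/12$. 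The remaining pieces---that $\on{dist}(\theta(v_i-v_j),\mb{Z})$ equals $|\theta(v_i-v_j)|$ on the cross-pairs, the summation, and the comparison with $\snorm{\theta D(v)}_2$---are routine; matching the exact constants $1/7$ and $\delta\rho/12$ takes some care in choosing $K$, $t$ and the $N$-versus-$(N-1)$ corrections simultaneously, and in any event a bound of the same shape with somewhat worse explicit constants follows immediately (these being, essentially, the constants recorded by Tran).
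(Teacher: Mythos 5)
The paper does not prove this lemma at all: it is imported verbatim from Tran \cite[Lemma~2.15]{Tra20}, and the only ``proof'' the paper offers is the one-line remark that the non-unit-norm version follows from the unit-norm version by the scaling $\on{CLCD}_{\alpha,\gamma}(cv)=\on{CLCD}_{\alpha,\gamma}(v)/c$. Your proposal, by contrast, is a from-scratch reconstruction of what Tran's argument must look like, so the two ``approaches'' are not really comparable. That said, the structure you lay out is the right one and is, up to constants, a sound proof of the stated inequality: dispose of $\delta N=O(1)$ via the trivial estimate $\on{CLCD}\ge \tfrac{1}{2\snorm{D(v)}_\infty}\ge\tfrac{1}{2\sqrt2\snorm v_2}$; use Markov to isolate a set $R$ on which $|v_i|\le K\snorm v_2/\sqrt m$; use the negation of the $\on{Cons}_{\delta,\rho}$ condition to extract a separation $w_{M-t}-w_t\ge 2\rho\snorm v_2/\sqrt m$ between two linear-sized ``tails'' $G_1,G_2\subseteq R$, yielding $\Omega(\delta^2 m^2)$ cross-pairs whose differences lie in a single interval of bounded ratio; then lower-bound $\on{dist}(\theta D(v),\mb Z^{\binom m2})$ coordinatewise on these pairs and compare against $\snorm{\theta D(v)}_2^2=\theta^2\bigl(m\snorm v_2^2-(\sum v_i)^2\bigr)\le\theta^2 m\snorm v_2^2$ to get $\on{dist}\ge\tfrac{\delta\rho}{4}\snorm{\theta D(v)}_2$, which contradicts $\gamma<\delta\rho/12$.

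The one place you should not wave your hands is the interaction between the two constraints on $K\sqrt\delta$. You need simultaneously $K\sqrt\delta\ge c_0$ (so that $|B|<m/K^2$ is small enough compared with $3\delta m/4$ to leave $\ge(1-\delta)m$ middle coordinates after removing the two tails of size $t=\lceil\delta m/8\rceil$; concretely $c_0=2/\sqrt3$) and $K\sqrt\delta\le\tfrac74\sqrt{m/N}$ (so that $|\theta(v_i-v_j)|<\tfrac12$ on all cross-pairs for every $\theta<\tfrac{\sqrt{\delta N}}{7\snorm v_2}$). The window $(2/\sqrt3,\ \tfrac74\sqrt{m/N})$ is genuinely narrow: as written, the crude bound covers $\delta N\le 49/8$, while the main argument's requirement $m\delta\bigl(\tfrac34-\tfrac1{K^2}\bigr)\ge 2$ kicks in only at $\delta m\gtrsim 5$–$6$ (the exact threshold depends on where in the window you sit and on the $\sqrt{m/N}$ correction), so for $\delta$ close to $1$ there is a short range of $N$ where neither branch as you've stated it applies. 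This does not invalidate the method -- the interval for $K\sqrt\delta$ is nonempty once $N\ge 2$, and relaxing $1/7$ a little, or sharpening the crude bound, or doing a short finite case check, closes the gap -- but you should not assert that ``the range just handled'' and ``the threshold of order $1/\delta$'' overlap without exhibiting concrete constants that make them do so, since the factor $\sqrt{N/(N-1)}$ pushes against you rather than in your favor. You also only need $\gamma<\delta\rho/4$ rather than the stated $\delta\rho/12$; the extra slack in the hypothesis is worth noting, as Tran presumably uses it precisely to make the constants close up cleanly.
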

\begin{remark}
The version in \cite{Tra20} is stated only for $\snorm{v}_2=1$, but the statement above is an easy consequence. 
\end{remark}
Next, we need that the $\on{CLCD}$ of a vector is `approximately stable' under small Euclidean perturbations.
\begin{lemma}[Stability of the CLCD, {\cite[Lemma~2.14]{Tra20}}]\label{lem:stability-CLCD}
Let $v\in\mb{R}^N$, $\alpha > 0$, and $\gamma\in(0,1)$. Then, for any $w\in\mb{R}^N$ with $\snorm{v-w}_2<\gamma\snorm{D(v)}_2/(5\sqrt{N})$, we have
\[\on{CLCD}_{\alpha/2,\gamma/2}(w)\ge\min\bigg(\on{CLCD}_{\alpha,\gamma}(v),\frac{\alpha}{4\sqrt{N}\snorm{v-w}_2}\bigg).\]
\end{lemma}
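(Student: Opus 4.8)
The plan is to reduce everything to the definition $\on{CLCD}_{\alpha,\gamma}(v)=\on{LCD}_{\alpha,\gamma}(D(v))$, where $D\colon\mb{R}^N\to\mb{R}^{\binom N2}$ is the \emph{linear} map $x\mapsto(x_i-x_j)_{i<j}$, and then to run a standard perturbation estimate for the $\on{LCD}$. The one ingredient specific to the $\on{CLCD}$ is the elementary identity $\snorm{D(x)}_2^2=N\snorm{x}_2^2-(\sum_i x_i)^2$ (obtained by expanding $\sum_{i<j}(x_i-x_j)^2$), which in particular gives $\snorm{D(x)}_2\le\sqrt N\snorm{x}_2$ for every $x\in\mb{R}^N$. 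Applying this to $x=v-w$ and using linearity of $D$ yields $\snorm{D(v)-D(w)}_2=\snorm{D(v-w)}_2\le\sqrt N\snorm{v-w}_2$; combined with the hypothesis $\snorm{v-w}_2<\gamma\snorm{D(v)}_2/(5\sqrt N)$ and $\gamma<1$, the triangle inequality then gives $\snorm{D(w)}_2\le\tfrac65\snorm{D(v)}_2$ (and $\snorm{D(w)}_2\ge\tfrac45\snorm{D(v)}_2$, though only the upper bound is needed).

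Now fix any $\theta$ with $0<\theta<\min\big(\on{CLCD}_{\alpha,\gamma}(v),\,\alpha/(4\sqrt N\snorm{v-w}_2)\big)$; it suffices to prove $\on{dist}(\theta D(w),\mb{Z}^{\binom N2})\ge\min\big(\tfrac\gamma2\theta\snorm{D(w)}_2,\tfrac\alpha2\big)$, since taking the infimum over such $\theta$ then shows that no admissible $\theta$ lies in the defining set of $\on{LCD}_{\alpha/2,\gamma/2}(D(w))$, which is exactly the asserted bound. Since $\theta<\on{CLCD}_{\alpha,\gamma}(v)$, the definition of $\on{LCD}$ gives $\on{dist}(\theta D(v),\mb{Z}^{\binom N2})\ge\min(\gamma\theta\snorm{D(v)}_2,\alpha)$, and the triangle inequality gives $\on{dist}(\theta D(w),\mb{Z}^{\binom N2})\ge\on{dist}(\theta D(v),\mb{Z}^{\binom N2})-\theta\snorm{D(v-w)}_2\ge\on{dist}(\theta D(v),\mb{Z}^{\binom N2})-\theta\sqrt N\snorm{v-w}_2$. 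I would then split into two cases according to which term attains the minimum $\min(\gamma\theta\snorm{D(v)}_2,\alpha)$. If it is $\alpha$, then $\theta\sqrt N\snorm{v-w}_2<\alpha/4$ by the choice of $\theta$, so the distance is at least $\alpha-\alpha/4=\tfrac34\alpha\ge\tfrac\alpha2$. If it is $\gamma\theta\snorm{D(v)}_2$, then the perturbation hypothesis gives $\theta\sqrt N\snorm{v-w}_2<\tfrac15\gamma\theta\snorm{D(v)}_2$, so the distance is at least $\tfrac45\gamma\theta\snorm{D(v)}_2\ge\tfrac\gamma2\cdot\tfrac65\theta\snorm{D(v)}_2\ge\tfrac\gamma2\theta\snorm{D(w)}_2$, using $\snorm{D(w)}_2\le\tfrac65\snorm{D(v)}_2$. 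In either case the required lower bound holds.

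I do not anticipate a genuine obstacle here: this is a routine two-case stability estimate, and the only care needed is to track the numerical constants so that the slack ($\tfrac34\ge\tfrac12$ in the first case, $\tfrac45\ge\tfrac35$ in the second) stays positive, and to correctly pass the $\binom N2$-dimensional distance bound from $D(w)$ to $D(v)$ via $\snorm{D(v)-D(w)}_2\le\sqrt N\snorm{v-w}_2$. The degenerate case $v=w$ is covered automatically, since then the second term in the minimum is $+\infty$ and the argument goes through with $\snorm{D(v-w)}_2=0$ (using additionally that $\on{CLCD}$ is monotone nonincreasing under shrinking $\alpha$ and $\gamma$).
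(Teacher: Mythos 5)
Your proof is correct. Note that the paper does not reproduce a proof here — it cites the result directly as Lemma~2.14 of Tran \cite{Tra20} — but your argument is exactly the standard LCD stability estimate (as in Rudelson--Vershynin) transported through the linear map $D$ via the bound $\snorm{D(v-w)}_2\le\sqrt N\snorm{v-w}_2$, which is precisely how Tran proves it; the constant-tracking in the two cases checks out ($3/4\ge 1/2$ and $4/5\ge 3/5\ge\tfrac{\gamma}{2}\snorm{D(w)}_2/(\gamma\snorm{D(v)}_2)$ using $\snorm{D(w)}_2\le\tfrac65\snorm{D(v)}_2$). One tiny wording slip: in the parenthetical about the degenerate case you say ``$\on{CLCD}$ is monotone \emph{nonincreasing} under shrinking $\alpha$ and $\gamma$''; what you need (and what is true, since shrinking $\alpha,\gamma$ shrinks the defining set of the infimum) is that it is \emph{nondecreasing} under shrinking $\alpha$ and $\gamma$.
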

Finally, we need a result on the metric entropy of level sets of the CLCD. This result is essentially stated in Tran \cite{Tra20}, except that we allow the length of the vectors to vary in an interval of constant order (rather than be constrained to live on the unit sphere as in \cite{Tra20}). A trivial modification of the argument in \cite{Tra20} produces the required result, so we do not provide a detailed justification here.
\begin{definition}[Level sets of CLCD]\label{def:level-set-CLCD}
Let $H > 0$ and $\chi, \mu \in (0,1)$. We define
\[L_{H,\chi, \mu} = \{x\notin\on{Cons}_{\delta,\rho}: \snorm{x}_2\in[\chi,1], H\le\on{CLCD}_{\mu N,\gamma}(x)\le 2H\}.\]
% so we are choosing $\alpha = \mu n$.
\end{definition}
\begin{lemma}[Nets of level sets of CLCD, From {\cite[Lemma~2.19]{Tra20}}]\label{lem:metric-entropy-CLCD}
Assume that $0 < \delta,\rho\ll 1$ and $0 < \mu\ll \zeta \ll_{\delta,\rho}\gamma\ll_{\delta,\rho}1$. Fix $H\ge \zeta \sqrt{n}$. Then, there exists a $(9\mu\sqrt{N}/H)$-net $\mc{N}$ of $L_{H,\chi, \mu}$ of cardinality at most $\mu^{-3}H^3(C_{\delta,\rho,\gamma,\chi,\zeta}H/\sqrt{N})^N$.
\end{lemma}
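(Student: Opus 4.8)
The plan is to follow the construction of Tran for level sets of the combinatorial LCD on the sphere (\cite[Lemma~2.19]{Tra20}), inserting one extra discretization so as to allow vectors of length in $[\chi,1]$ rather than length exactly $1$.

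First I would peel off the constant part. Given $x\in L_{H,\chi,\mu}$, write $x=x_0+\bar x\mbf{1}$ with $x_0\perp\mbf{1}$ and $\bar x=N^{-1}\mbf{1}\cdot x$, so $|\bar x|\le \snorm{x}_2/\sqrt{N}\le 1/\sqrt{N}$. Since $x\notin\on{Cons}_{\delta,\rho}$, applying the definition with candidate constant $\bar x$ shows that more than $\delta N$ coordinates have $|x_i-\bar x|\ge\rho\snorm{x}_2/\sqrt{N}$, whence $\snorm{x_0}_2^2=\sum_i(x_i-\bar x)^2\ge\delta\rho^2\snorm{x}_2^2\ge\delta\rho^2\chi^2$ and thus $\snorm{x_0}_2\in[\sqrt{\delta}\,\rho\,\chi,\,1]$. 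As $D(x)=D(x_0)$ we have $\on{CLCD}_{\mu N,\gamma}(x)=\on{LCD}_{\mu N,\gamma}(D(x_0))\in[H,2H]$, so by \cref{def:CLCD} there is a witness $\theta\in[H,2H+1]$ and an integer point $p\in\mb Z^{\binom{N}{2}}$ with $\snorm{\theta D(x_0)-p}_2<\mu N$; note also $\snorm{p}_2\le\theta\snorm{D(x_0)}_2+\mu N=O(H\sqrt{N})$, using $\snorm{D(x_0)}_2=\sqrt{N}\snorm{x_0}_2\le\sqrt{N}$.

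Next I would recover $x_0$ from $p$. The map $D\colon\mb R^N\to\mb R^{\binom{N}{2}}$, $v\mapsto(v_i-v_j)_{i<j}$, satisfies $D^\top D=NI-J$, so $N^{-1}D^\top$ inverts $D$ on $\mbf{1}^\perp$ — indeed $N^{-1}D^\top D(x_0)=x_0$ — and has operator norm $1/\sqrt{N}$. Hence in $\ell^2$, $x_0$ is within $\frac{1}{N}\snorm{D^\top}\snorm{D(x_0)-p/\theta}_2\le\frac{1}{\sqrt{N}}\cdot\frac{\mu N}{\theta}\le\frac{\mu\sqrt{N}}{H}$ of the point $\tilde x:=\frac{1}{N\theta}D^\top p$, which satisfies $\snorm{\tilde x}_2\le 2$ and lies in the scaled root lattice $\frac{1}{N\theta}D^\top\mb Z^{\binom{N}{2}}=\frac{1}{N\theta}\{z\in\mb Z^N:\mbf{1}\cdot z=0\}$. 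Discretizing $\theta$ to precision $\sim\mu\sqrt{N}$ and $\bar x\in[-1/\sqrt{N},1/\sqrt{N}]$ to precision $\sim\mu/H$ perturbs $\tilde x+\bar x\mbf{1}$ by a further $O(\mu\sqrt{N}/H)$ in $\ell^2$, so after tallying all the errors, the collection $\mc{N}$ of all $\frac{1}{N\theta'}D^\top p+\bar x'\mbf{1}$ — over the admissible discretized pairs $(\theta',\bar x')$ and over the relevant integer vectors $p$ — forms a $(9\mu\sqrt{N}/H)$-net of $L_{H,\chi,\mu}$.

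The remaining task, and the crux, is to bound $|\mc{N}|$. The number of admissible pairs $(\theta',\bar x')$ is at most $\mu^{-3}H^3$ (crudely), which is the source of the polynomial prefactor. The number of distinct net points contributed by $p$ is the number of distinct values $\frac{1}{N\theta'}D^\top p$ with $p\in\mb Z^{\binom{N}{2}}$, $\snorm{p}_2=O(H\sqrt{N})$, and — crucially — $\on{dist}(p,\on{Im}(D))<\mu N$, this last constraint being forced by $\on{dist}(\theta D(x_0),\mb Z^{\binom{N}{2}})<\mu N$ together with $\theta D(x_0)\in\on{Im}(D)$. It is precisely this confinement of $p$ to a thin neighbourhood of an $(N-1)$-dimensional subspace of the $\binom{N}{2}$-dimensional lattice that cuts the count down from the trivial volumetric bound to $(C_{\delta,\rho,\gamma,\chi,\zeta}H/\sqrt{N})^N$ — and this is only available because $H\ge\zeta\sqrt{N}$ and $\mu\ll\zeta\ll_{\delta,\rho}\gamma\ll_{\delta,\rho}1$, so that one cannot simply quote the classical LCD level-set lemma for $D(x)$ in dimension $\binom{N}{2}$. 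This lattice-point count is exactly the metric-entropy computation of \cite[Lemma~2.19]{Tra20} (one additionally discards the part of the lattice that lands in $\on{Cons}_{\delta,\rho}$, which is harmless, and allows $\snorm{x_0}_2$ to range over $[\sqrt{\delta}\rho\chi,1]$, which only affects the constant); combined with the $\mu^{-3}H^3$ factor this gives the claimed bound. I expect this count to be the main obstacle, since it is the heart of Tran's argument; the modification to $\snorm{x}_2\in[\chi,1]$, on the other hand, enters only through the lower bound $\snorm{x_0}_2\gtrsim_{\delta,\rho}\chi$ and the extra discretization of $\bar x$ above, and is therefore routine.
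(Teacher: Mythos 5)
Your proposal follows the same route the paper takes: the paper provides no proof of this lemma, deferring to Tran's Lemma 2.19 with the observation that the extension from unit vectors to vectors of norm in $[\chi,1]$ requires only a trivial modification. Your reconstruction of Tran's argument — peeling off the mean $\bar x\mathbf{1}$, using $x\notin\on{Cons}_{\delta,\rho}$ to lower-bound $\snorm{x_0}_2$, recovering $x_0$ from the lattice witness $p$ via the pseudo-inverse $N^{-1}D^\top$ (using $D^\top D=NI-J$), discretizing $\theta$ and $\bar x$, and identifying the lattice-point count for $p$ confined near $\on{Im}(D)$ as the crux to be quoted from Tran — matches the intended argument and correctly locates the variable-length modification as routine.
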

\begin{remark}
The key point in the lemma is that the constant $C_{\delta, \rho, \gamma, \chi, \zeta}$ is independent of $\mu$, so that there is no $\mu$ dependence in the base of the exponent $N$. The extra $\mu^{-3}H^3$ in net size comes from a slight but unimportant technical error in the presentation of \cite{Tra20}, as well as buffer for our version which is applicable to vectors not necessarily on the unit sphere. Also, the condition on $H$ here is slightly weaker than in \cite{Tra20}, but is proved in an identical fashion.
\end{remark}

\subsection{Quasirandomness Properties of \texorpdfstring{$d$}{d}-Regular Digraphs}\label{sub:quasirandom}
We will need various quasirandomness properties of $d$-regular digraphs, which are concisely captured in \cref{thm:quasirandom}. In our regime $d = \lambda n$ (for fixed $\lambda \in (0,1/2]$), these are straightforward consequences of the asymptotic enumeration of digraphs with specified degree sequences, due to Canfield, Greenhill, and McKay \cite{CGM08} (building on seminal work of McKay and Wormald \cite{MW90}, which solved the analogous problem for graphs). The techniques in \cite{CGM08, MW90} represent the number of digraphs with prescribed degree sequences as a contour integral, and then analyze the resulting expression using saddle points -- in our case, the utility of these asymptotic enumeration results is that allow us to easily `transfer' various quasirandomness properties, which depend only on a small number of rows of the adjacency matrix, from Erd\H{o}s-R\'enyi digraphs to uniform $d$-regular digraphs. 
% any property of digraphs depending only on a small number of rows holds with very high probability for random $d$-regular digraphs provided it holds for   and roughly these asymptotic enumeration results allow one to derive that a constant number of rows are not too correlated, which is sufficient for various regularity conditions we require. 
% We note that the quasirandomness conditions needed for our work can also be derived from discrepancy results due to Cook \cite{Cook17a}, which instead rely on exchangeable pairs; however, various modifications to handle slightly irregular graphs are required, and we therefore choose to derive the necessary results from asymptotic enumeration results. 

\begin{definition}[Switching set]\label{def:switching-set}
For two vertices $i,j\in[n]$, we define their \emph{switching set} $S_{i,j}$ in digraph $A$ as the set of indices $k$ with $a_{ik}\neq a_{jk}$. Define the \emph{weight} of the switching set on a subset $S\subseteq[n]$ to be
\[\omega_{i,j}(S) = \sum_{k\in S}(a_{ik}-a_{jk}) = \sum_{k\in S\cap S_{i,j}}(a_{ik}-a_{jk}).\]
Note that $\omega_{i,j}([n]) = 0$ for a $d$-regular digraph $A$. 
\end{definition}

We now define a few events for a $d$-regular digraph $A$. 
\begin{definition}[Quasirandomness properties]
\label{def:Q-Q'-Q''}
For $A \in \mc{M}_{n,d}$, we define the following events.
\begin{enumerate}[(P1)]
\item Given $h \in \mb{N}$, let $\mc{Q}_h$ be the event that for any $2h$ distinct rows $A_{i_1},\ldots,A_{i_h}$ and $A_{j_1},\ldots,A_{j_h}$, we have
\[\bigg|\bigcap_{k=1}^hS_{i_k,j_k}^c\bigg|\le 2(\lambda^2+(1-\lambda)^2)^hn.\]
\item For $S\subseteq[n]$, let $\mc{Q}_S'$ be the event that for all sets of $4$ distinct rows $A_{i_1},A_{i_2},A_{j_1},A_{j_2}$, 
\[\min(|S_{i_1,j_1}\cap S_{i_2,j_2}\cap S|,|S_{i_1,j_1}\cap S_{i_2,j_2}\cap S^c|)\ge (2\lambda(1-\lambda))^2n/4.\]
\item For $S\subseteq [n]$, let $\mc{Q}_S''$ be the event that for every pair of distinct rows $A_i,A_j$, we have
\[|\omega_{i,j}(S)|\le\min\bigg(\frac{|S\cap S_{i,j}|}{6},\frac{|S^c\cap S_{i,j}|}{6}\bigg).\]
\item Finally, for a family $\mc{R}$ of subsets of $[n]$ and $h \in \mb{N}$, define
\[\mc{Q}_{h,\mc{R}} = \mc{Q}_h\cap\bigcap_{S\in\mc{R}}(\mc{Q}_S'\cap\mc{Q}_S'');\]
this final event encapsulates all the necessary quasirandomness conditions that we will need.
\end{enumerate}
\end{definition}
\begin{theorem}[Random regular digraphs are  quasirandom]\label{thm:quasirandom}
Let $h < n^{1/4}$ be a positive integer, and $\mc{R}\subseteq\binom{[n]}{n/2}$ be a family of sets. Let $A$ be chosen uniformly at random from $\mc{M}_{n,d}$. Then
\[\mb{P}[\mc{Q}_{h,\mc{R}}^c]\lesssim|\mc{R}|\exp(-\Omega_\lambda(n)).\]
\end{theorem}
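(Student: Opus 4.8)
The plan is to deduce all of the quasirandomness properties from a single transfer principle: since each event in $\mc{Q}_{h,\mc{R}}$ depends only on $O(h)$ rows of $A$, its probability under the uniform measure on $\mc{M}_{n,d}$ is comparable (up to a factor that is subexponential in $n$) to its probability under the Erd\H{o}s--R\'enyi model $G(n,\lambda)$, where each entry is an independent $\on{Ber}(\lambda)$ variable. Concretely, for any event $\mc{E}$ determined by rows $A_{k_1},\dots,A_{k_m}$ with $m = O(h) \le n^{1/4}$, conditioning on the complementary $n-m$ rows of a uniform $A \in \mc{M}_{n,d}$ fixes a residual degree sequence on the remaining $m\times n$ submatrix, and the asymptotic enumeration formula of Canfield--Greenhill--McKay \cite{CGM08} shows that the number of completions with a prescribed value on these $m$ rows is, to within a $\exp(o(n))$ factor (in fact a polynomial factor, since $m$ is so small and the degrees stay at $\Theta(n)$), proportional to the Erd\H{o}s--R\'enyi probability of that configuration. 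Hence $\mb{P}_{\mc{M}_{n,d}}[\mc{E}] \le \exp(o(n)) \cdot \mb{P}_{G(n,\lambda)}[\mc{E}]$, and it suffices to prove each property holds except with probability $\exp(-\Omega_\lambda(n))$ in the independent model, where the gain is genuinely exponential and dominates the subexponential transfer loss.

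Next I would check each of (P1)--(P3) in the Erd\H{o}s--R\'enyi model, in each case for a \emph{fixed} tuple of rows, and then union bound over the polynomially (or $|\mc{R}|$-)many choices. For (P1): under $G(n,\lambda)$, the events $\{k \in S_{i_\ell,j_\ell}^c\} = \{a_{i_\ell k} = a_{j_\ell k}\}$ for a fixed $k$ across $\ell = 1,\dots,h$ are independent, each with probability $\lambda^2 + (1-\lambda)^2$, so $|\bigcap_\ell S_{i_\ell,j_\ell}^c|$ is a sum of $n$ i.i.d.\ $\on{Ber}((\lambda^2+(1-\lambda)^2)^h)$ variables; Chernoff gives that it exceeds $2(\lambda^2+(1-\lambda)^2)^h n$ with probability $\exp(-\Omega((\lambda^2+(1-\lambda)^2)^h n))$. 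Since $h < n^{1/4}$ and $\lambda \ge $ some constant, $(\lambda^2+(1-\lambda)^2)^h$ could in principle be as small as $2^{-n^{1/4}}$, which is not $\exp(-\Omega_\lambda(n))$ — so here one must be slightly careful: the number of tuples to union over is $n^{2h}$, and $n^{2h} \cdot \exp(-c(\lambda^2+(1-\lambda)^2)^h n)$; one checks that for $h$ beyond $O_\lambda(\log n)$ the bound $2(\lambda^2+(1-\lambda)^2)^h n$ is already below $1$ for $n$ large (an intersection of that many switching complements is likely empty), so the event holds trivially, and for $h = O_\lambda(\log n)$ the Chernoff bound beats the union bound comfortably. (Alternatively the hypothesis $h < n^{1/4}$ together with the constant lower bound on $\lambda$ keeps everything in range with a more careful accounting; I'd present the clean dichotomy.) For (P2): $S_{i_1,j_1} \cap S_{i_2,j_2} \cap S$ counts $k \in S$ with $a_{i_1 k} \neq a_{j_1 k}$ and $a_{i_2 k} \neq a_{j_2 k}$, a sum over $|S| = n/2$ independent coordinates each with probability $(2\lambda(1-\lambda))^2$ (using independence of the four entries in column $k$ — this uses that $i_1,i_2,j_1,j_2$ are distinct), so Chernoff gives concentration around $(2\lambda(1-\lambda))^2 n/2$, and $(2\lambda(1-\lambda))^2 n/4$ is comfortably below the mean; same for $S^c$. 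For (P3): $\omega_{i,j}(S) = \sum_{k \in S\cap S_{i,j}}(a_{ik}-a_{jk})$ is, conditionally on $S_{i,j}$, a sum of $|S \cap S_{i,j}|$ independent $\pm 1$ signs (each $\pm 1$ with probability $1/2$ given membership in $S_{i,j}$), so by Hoeffding it is $O(\sqrt{|S\cap S_{i,j}|\log(1/\epsilon)})$ except with probability $\epsilon$; meanwhile $|S \cap S_{i,j}|$ and $|S^c \cap S_{i,j}|$ are each $\Theta_\lambda(n)$ with exponentially good probability (sums of $\Theta(n)$ independent $\on{Ber}(2\lambda(1-\lambda))$ variables), so the required bound $|\omega_{i,j}(S)| \le \frac16 \min(|S\cap S_{i,j}|, |S^c\cap S_{i,j}|)$, which asks for $O(n)$ on the left and $\Theta(n)$ on the right, holds except with probability $\exp(-\Omega_\lambda(n))$.

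Finally I would assemble: by the transfer principle each of the polynomially many instances of (P1) and (P2), and each of the $|\mc{R}|$ instances of (P2) and (P3) indexed by $S \in \mc{R}$ (times polynomially many row-tuples), fails under $\mc{M}_{n,d}$ with probability at most $\exp(o(n)) \cdot \exp(-\Omega_\lambda(n)) = \exp(-\Omega_\lambda(n))$; a union bound over all of them, absorbing the polynomial-in-$n$ and $|\mc{R}|$ factors into the exponential (the polynomial is $\exp(o(n))$, and $|\mc{R}|$ is carried explicitly as in the statement), yields $\mb{P}[\mc{Q}_{h,\mc{R}}^c] \lesssim |\mc{R}| \exp(-\Omega_\lambda(n))$.

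I expect the main obstacle to be making the transfer step from $\mc{M}_{n,d}$ to $G(n,\lambda)$ fully rigorous with an error that is genuinely $\exp(o(n))$ (or polynomial) rather than merely $e^{O(n)}$ — this is where the precise asymptotic enumeration of \cite{CGM08} for digraphs with nearly-regular degree sequences is essential, and one has to verify that after conditioning on $n - O(h)$ rows the residual degree sequence stays within the range where those enumeration formulas apply with the stated accuracy, and that the ratio of completion counts to Erd\H{o}s--R\'enyi weights is controlled uniformly. The probabilistic estimates in the independent model are all routine Chernoff/Hoeffding bounds; the only mild subtlety there is the accounting for (P1) when $h$ grows, handled by the dichotomy above.
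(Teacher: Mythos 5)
Your proposal is correct and takes essentially the same approach as the paper: both transfer events depending on $O(h)$ rows from uniform $\mc{M}_{n,d}$ to an independent-entries model via the Canfield--Greenhill--McKay enumeration and then finish with Chernoff/Hoeffding and a union bound, the only cosmetic difference being that the paper makes the transfer a two-step comparison --- first to independent rows each uniform on the row-sum-$d$ slice (cost $\exp(O_\lambda(h))$ from comparing counts $N_c$ with nearby column sums), and then to i.i.d.\ $\operatorname{Ber}(\lambda)$ entries (cost $(O(n\lambda(1-\lambda)))^h$ from a local-CLT conditioning) --- whereas you fold these into a single step, which is fine since the combined loss $\exp(O_\lambda(h\log n))$ is indeed $\exp(o(n))$ for $h<n^{1/4}$. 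One small caution: your dichotomy for (P1) at large $h$ (arguing the event ``holds trivially'' because the intersection is ``likely empty'') is not actually a proof, since ``likely'' is far from exponentially likely and does not survive the union bound over $n^{2h}$ row-tuples; the paper's proof is equally terse on this point, and in any case it is moot because the application only uses $h = O_\lambda(1)$.
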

\begin{remark}
In our application, $h$ will be a sufficiently large constant depending on various parameters (which in turn depend on $\lambda$); see \cref{eqn:eta-Q}. 
\end{remark}
\begin{proof}
A special case of \cite[Theorem~1]{CGM08} gives the following: let $N_{c}$ (respectively $N_{c'}$) denote the number of $(n-2h)\times n$ matrices with row sums $d = \lambda n$ and column sums $c_1,\ldots,c_n$ (respectively $c_1',\ldots, c_n'$) where $c_i, c_i'\in[d-2h,d]$. Then, $\max\{N_{c}/N_{c'}, N_{c'}/N_{c} \} \leq \exp(O_\lambda(h))$, for all $n$ sufficiently large (in terms of $\lambda$). 

In particular, the following is immediate: let $\mc{E}$ be an event for digraphs depending on at most $2h$ specified rows, let $p$ denote the probability of $\mc{E}$ for a uniformly chosen random $d$-regular digraph, and let $p'$ denote the probability of $\mc{E}$ for a uniformly chosen $\{0,1\}$-matrix subject to each row having sum $d$. Then, $p \leq p'\exp(O_\lambda(h))$ for all $n$ sufficiently large (in terms of $\lambda$). Moreover, letting $p''$ be the probability of $\mc{E}$ for the model where each entry of the $2h$ specified rows is an i.i.d. copy of $\on{Ber}(\lambda)$, and each of the remaining rows is chosen independently from the uniform distribution on vectors in $\{0,1\}^{n}$ summing to $d$, we see by a simple conditioning argument that $p' \leq O(n\lambda(1-\lambda))^{h}p''$.

Finally, the requisite probability bounds for the last model follow from a straightforward application of Hoeffding's inequality and the union bound, at which point we can conclude by the above comparison argument.     
%Thus we immediately see that a random $d$-regular digraph, restricted to $2h$ rows, has each possible realization of the rows with sum $d$ occurring with a probability at least $\exp(-O_\lambda(h))$ of what it is if the rows were independent.
%In particular, any event in the dependent row model that depends on at most $2h$ rows has probability at most equal to the value in the independent model, times a factor of at most $\exp(O_\lambda(h))$.
%The result follows immediately for $\mc{Q}_h$ by comparison to a model in which all entries are independent (rather than rows being constrained-sum) and union-bounding. The analysis for $\mc{Q}_S',\mc{Q}_S''$ (which is union-bounded over $S\in\mc{R}$) are completely analogous. 
\end{proof}

\subsection{Invertibility with respect to a single vector}
The goal of this subsection is to show that for any fixed vector $x \in \mb{S}_{0}^{n-1}$, $\snorm{Ax}_{2} \gtrsim_{\lambda} \sqrt{n}$, except with exponentially small probability.  
\begin{lemma}[Invertibility with respect to a fixed sum-zero vector]\label{lem:single-vector}
Let $d = \lambda n$. There is an absolute constant $c_{\lambda} > 0$ for which the following holds. Let $A$ be chosen uniformly at random from $\mc{M}_{n,d}$. Then, 
\[\sup_{x\in \mb{S}_{0}^{n-1}}\mb{P}[\snorm{Ax}_2\le c_\lambda\sqrt{n}]\le 2e^{-c_\lambda n}.\]
%for any $x\in\mb{S}_0^{n-1}$.
\end{lemma}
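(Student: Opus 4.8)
The plan is to fix $x\in\mb{S}_0^{n-1}$ and prove the anti-concentration bound by splitting the sphere into the compressible/almost-constant regime and the incompressible regime, handling each separately. For the first regime, note that $\mb{S}_0^{n-1}$ contains no vectors that are almost-constant in the sense of $\on{Cons}_{\delta,\rho}$ once $\delta,\rho$ are small enough: a sum-zero unit vector cannot be within $\rho/\sqrt n$ of a constant vector $\lambda\mbf{1}$ unless $|\lambda|$ itself is tiny, in which case $\snorm{x}_2$ could not be $1$. More to the point, if $x\in\on{Comp}^0_{\delta,\rho}$ then $x$ is close to a $\delta n$-sparse sum-zero vector, and I would use the quasirandomness of $A$ (specifically $\mc{Q}_h$, which controls the sizes of intersections of switching-set complements, equivalently the pairwise "distances" of rows) to show that $\snorm{Ax}_2$ cannot be small. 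Concretely, for a genuinely sparse sum-zero $y$ supported on a set $T$ with $|T|\le\delta n$, the vector $Ay$ has coordinates $\sum_{k\in T}a_{ik}y_k$; using that most pairs of columns of $A$ restricted to a linear-sized set of rows look like independent $\on{Ber}(\lambda)$ columns (a consequence of \cref{thm:quasirandom}), one shows $\snorm{Ay}_2\gtrsim_\lambda\sqrt n$ deterministically on the quasirandom event, and then transfers to nearby $x$ using $\snorm{A}\le d=\lambda n$ together with the fact that the distance $\rho$ can be taken as small as we like relative to $\lambda$. Since $\mb{P}[\mc{Q}_h^c]\le e^{-\Omega_\lambda(n)}$, this regime contributes only $e^{-c_\lambda n}$.

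For the incompressible regime, the strategy is to exploit the switching randomness directly. Fix $x\in\on{Incomp}^0_{\delta,\rho}$. By \cref{lem:robust-split-matching} (applied with $v=w=x$), there is a pair $(\sigma,S)$ from the constant-sized universal family $\mc{R}_{\delta,\rho}$ so that $x$ has linearly many typical-size coordinates of each sign in both $S$ and $S^c$; in particular $x|_S$ and $x|_{S^c}$ are both far from constant, hence by \cref{lem:lower-CLCD} have $\on{CLCD}$ at least $\Omega_{\delta,\rho}(\sqrt n)$ (after rescaling; note $\snorm{x|_S}_2$ and $\snorm{x|_{S^c}}_2$ are bounded below by the bi-spread property). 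Now I would reveal $A$ in two stages: first expose the column sums of $A$ and all of $A$ except for a single pair of rows $A_k,A_\ell$ restricted to their switching set, then use the remaining switching randomness. After conditioning, for a pair of rows $A_k,A_\ell$, the difference of their inner products with $x$ — or, after the refined restriction described in the overview, an inner product of $x|_S$ with a $\{0,1\}$-vector of prescribed sum — is exactly a variable of the form $W_{t,\cdot}$ as in \cref{def:W}, so \cref{lem:levy-concentration-CLCD} gives L\'evy concentration $\lesssim_{\lambda} \epsilon + 1/\on{CLCD} + e^{-\Omega_\lambda(n)}\lesssim_\lambda \epsilon + n^{-1/2} + e^{-\Omega_\lambda(n)}$ for each such pair. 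Running this over $\Omega(n)$ disjoint pairs of rows (the pairs furnished by the matching structure of $\sigma$, on which $\mc{Q}'_S$ guarantees linear-sized common switching sets and $\mc{Q}''_S$ controls the weights $\omega_{k,\ell}(S)$), the coordinates of $Ax$ on these rows become, after conditioning, independent with uniformly bounded L\'evy concentration at width $c_\lambda$; a tensorization/Paley–Zygmund argument (e.g. \cite[Lemma]{rudelson2008littlewood}-style) then yields $\mb{P}[\snorm{Ax}_2\le c_\lambda\sqrt n]\le e^{-c_\lambda n}$ on the quasirandom event $\mc{Q}_{h,\mc{R}_{\delta,\rho}}$.

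Putting the two regimes together and absorbing $\mb{P}[\mc{Q}_{h,\mc{R}_{\delta,\rho}}^c]\lesssim|\mc{R}_{\delta,\rho}|e^{-\Omega_\lambda(n)}=e^{-\Omega_\lambda(n)}$ (since $|\mc{R}_{\delta,\rho}|=O_{\delta,\rho}(1)$) gives the claimed bound $2e^{-c_\lambda n}$ after choosing $\delta,\rho,h$ appropriately in terms of $\lambda$ and shrinking $c_\lambda$. The main obstacle I anticipate is the bookkeeping in the incompressible case: because no two entries of $A$ are independent, one must set up the conditioning so that the switching randomness on the $\Omega(n)$ chosen pairs of rows really is jointly independent and each piece genuinely carries a non-degenerate $W_{t,\cdot}$ distribution with the right variance parameter $t(1-t)=\Theta_\lambda(1)$ — this is exactly what $\mc{Q}'_S$ (linear common switching sets) and $\mc{Q}''_S$ (the weight bound $|\omega_{k,\ell}(S)|\le\min(|S\cap S_{k,\ell}|,|S^c\cap S_{k,\ell}|)/6$, which keeps $t$ bounded away from $0$ and $1$) are designed to supply, so the real content is verifying that the hypotheses of \cref{lem:levy-concentration-CLCD} (in particular $\snorm{D(x|_S)}_2\gtrsim_\lambda\sqrt{n}$) hold simultaneously for all the relevant restrictions, which follows from the bi-spread corollary \cref{cor:incomp-signed-spread} and the defining properties of the family $\mc{R}_{\delta,\rho}$.
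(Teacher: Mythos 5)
Your approach is genuinely different from the paper's and has a substantive gap in the compressible regime; your incompressible regime is in the right spirit but uses heavier machinery than the paper does.

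The paper's proof does \emph{not} split into compressible and incompressible cases. The key observation is that for any $x\in\mb{S}_0^{n-1}$ one has $\snorm{D(x)}_2^2 = n\snorm{x}_2^2 - (\sum_i x_i)^2 = n$, which is uniform over the whole sum-zero sphere. Defining the weight of a row pair $(i,j)$ by $w_{i,j}=\sum_{k,\ell\in S_{i,j}}(x_k-x_\ell)^2$, a double-counting argument gives $\sum_{i,j}w_{i,j}\ge\sum_{k,\ell}\tfrac{1}{2}|S^{(k,\ell)}|^2(x_k-x_\ell)^2$, where $S^{(k,\ell)}$ is the \emph{column} switching set; on the quasirandom event that all such column switching sets are linear-sized, this is $\gtrsim_\lambda n^2\snorm{D(x)}_2^2 = n^3$, so $\Omega_\lambda(n^2)$ row pairs have weight $\gtrsim_\lambda n$. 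A uniformly random permutation $\sigma$ then pairs up $\Omega_\lambda(n)$ good disjoint row pairs with high probability, and conditioning on $\sigma$ and the row sums $A_{\sigma(2i-1)}+A_{\sigma(2i)}$ makes the differences $A_{\sigma(2i-1)}-A_{\sigma(2i)}$ conditionally independent, uniform on $\pm1$-vectors on their switching set summing to zero. Anti-concentration of $(A_{\sigma(2i-1)}-A_{\sigma(2i)})\cdot x$ is then obtained via hypercontractivity on the central slice plus Paley--Zygmund (no CLCD needed), and tensorization finishes. This works identically whether or not $x$ is compressible, since $\snorm{D(x)}_2^2=n$ always.

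The gap in your proposal is the compressible regime. You assert that for a sparse sum-zero $y$, ``one shows $\snorm{Ay}_2\gtrsim_\lambda\sqrt n$ deterministically on the quasirandom event,'' but this is not justified: the quasirandomness properties of \cref{thm:quasirandom} are pairwise statements about switching sets of rows (or columns), and they do not obviously yield a deterministic restricted-isometry-type lower bound over sparse sum-zero vectors — that would be a much stronger statement than what is being claimed, and would anyway risk circularity with \cref{lem:compressible-singular-vector}, whose proof uses the present lemma. What is actually needed for a fixed compressible $x$ is exactly the single-vector probabilistic bound you are trying to prove, so the split buys you nothing; the whole point of the paper's argument is that it is uniform over $\mb{S}_0^{n-1}$. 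Your incompressible argument is closer to the main small-QCLCD argument (\cref{sub:small-QCLCD}) in flavor, invoking \cref{lem:robust-split-matching} and \cref{lem:levy-concentration-CLCD}; while one could likely make a version of this work, it is considerably heavier than what the paper does here, and you would still need to verify the CLCD lower bound on the restrictions $x|_{T_i}$ to the actual per-pair switching sets (not $x|_S$ or $x|_{S^c}$), handle the exceptional indices where it fails, and carefully set up the conditioning so the $\Omega(n)$ events are genuinely jointly independent — details your sketch does not pin down.
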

\begin{proof}
To start we note that $\snorm{D(x)}_2^2 = n$. We denote the rows of $A$ by $A_i$, and the columns of $A$ by $A^{(i)}$. For indices $i\neq j$, let $S_{i,j}$ denote the switching set of rows $A_i$ and $A_j$, and let $S^{(i,j)}$ denote the switching set of columns $A^{(i)}$ and $A^{(j)}$ (i.e., the set of $k$ with $a_{ki} \neq a_{kj}$). Let $m = \lfloor n/2 \rfloor$. For $(\sigma,A)$ distributed uniformly in $\mf{S}_{[n]}\times \mc{M}_{n,d}$, let $\mc{G}$ be the sigma-algebra generated by $\sigma$ and the random variables given by the row sums $A_\sigma(1)+A_\sigma(2),A_\sigma(3)+A_\sigma(4),\ldots,A_{\sigma(2m-1)}+A_{\sigma(2m)}$ (so that if $n$ is odd, then $A_{\sigma(n)}$ is measurable with respect to $\mc{G}$). Note that conditioned on $\mc{G}$, %$A$ conditional on $\mc{G}$ has distribution given by independently choosing each row difference $A_1-A_2,A_3-A_4,\ldots,A_{2m-1}-A_{2m}$ (see the discussion in \cref{sub:switching} for a similar situation). Furthermore, 
each of the vectors $A_{\sigma(2i-1)}-A_{\sigma(2i)}$ for $i\in [m]$ is distributed uniformly on the set of vectors supported on the switching set $S_{\sigma(2i-1),\sigma(2i)}$ that have $\pm 1$ entries within the support and sum to $0$.

%Fix $i,j$. Let $A^{(i)}$ be the $i$th column of $A$, and let $R_{i,j}$ be the switching set on columns $i,j$ (i.e., the set of $k$ with $a_{ki}\neq a_{kj}$). 
Let $\mc{E}_1$ denote the event that $|S^{(i,j)}|\gtrsim_{\lambda}n$ for every pair of distinct $i,j\in[n]$. Then, from \cref{thm:quasirandom} (and row-column symmetry), we know that $\mb{P}[\mc{E}_1^{c}] \leq \exp(-\Omega_{\lambda}(n))$. 

Next, for every pair of distinct rows $i,j$, we define their weight (with respect to the vector $x$) to be
\[w_{i,j} = \sum_{k,\ell\in S_{i,j}}(x_k-x_\ell)^2.\]
Then, we see that
\[\sum_{i,j}w_{i,j} \ge \sum_{k,\ell}\frac{|S^{(k,\ell)}|^2}{2}(x_k-x_\ell)^2,\]
since every configuration with $a_{ik}=a_{j\ell}=1-a_{i\ell}=1-a_{jk}$ is counted on the left, and the right is a clear lower bound for this quantity. In particular, on the event $\mc{E}_1$,  %And therefore, with probability $1-\exp(-\Omega_\lambda(n))$, 
we have
\[\sum_{i,j}w_{i,j}\gtrsim_{\lambda}n^2\snorm{D(x)}_2^2.\]
Furthermore, since  $w_{i,j}\le\snorm{D(x)}_2^2$, we find that on the event $\mc{E}_1$, there are at least $\Omega_\lambda(n^2)$ pairs of distinct $i,j\in[n]$ with $w_{i,j}\gtrsim_\lambda\snorm{D(x)}_2^2 = n$.

%Let $m = \lfloor n/2 \rfloor$. 
%Hence a random matching on $[n]$ has at least $\Omega_\lambda(n)$ matches satisfying $w_{i,j}\gtrsim_\lambda\snorm{D(x)}_2^2$ with probability $1-\exp(-\Omega_\lambda(n))$. (This argument is similar to the one in \cref{lem:robust-split-matching}.)
%For the sake of simplicity, we will write the remainder of the argument as if the matching chosen matched $\{1,2\},\ldots,\{2m-1,2m\}$ where $m = \lfloor n/2\rfloor$.
%For $(\sigma,A)$ distributed uniformly in $\mf{S}_{[n]}\times \mc{M}_{n,d}$, let $\mc{G}$ be the sigma-algebra generated by $\sigma$ and the random variables given by the row sums $A_\sigma(1)+A_\sigma(2),A_\sigma(3)+A_\sigma(4),\ldots,A_{\sigma(2m-1)}+A_{\sigma(2m)}$ (note that if $n$ is odd, then $A_{\sigma(n)}$ is measurable with respect to $\mc{G}$). Note that conditioned on $\mc{G}$, %$A$ conditional on $\mc{G}$ has distribution given by independently choosing each row difference $A_1-A_2,A_3-A_4,\ldots,A_{2m-1}-A_{2m}$ (see the discussion in \cref{sub:switching} for a similar situation). Furthermore, 
%each of the vectors $A_{\sigma(2i-1)}-A_{\sigma(2i)}$ for $i\in [m]$ is distributed uniformly on the set of vectors supported on the switching set $S_{\sigma(2i-1),\sigma(2i)}$ that have $\pm 1$ entries within the support and sum to $0$.

Let $\mc{E}_{2}$ denote the event (measurable with respect to $\mc{G}$) that at least $\Omega_{\lambda}(n)$ `good' pairs $(\sigma(2i-1), \sigma(2i))$ satisfy $w_{\sigma(2i-1), \sigma(2i)} \gtrsim_{\lambda} n$. Then, the above discussion, along with a similar argument as in the proof of \cref{lem:robust-split-matching} shows that $\Pr[\mc{E}_2^{c}] \leq \exp(-\Omega_{\lambda}(n))$.  
%Finally, we can assume that the realization of the random matching and $\mc{G}$ is such that at least $\Omega_\lambda(n)$ pairs $(2i-1,2i)$ have $w_{2i-1,2i}\gtrsim_\lambda n$ (since the error probability is $\exp(-\Omega_\lambda(n))$). 
On the event $\mc{E}_2$, define $\mc{P}$ to be the set of indices $i \in [m]$ such that $(\sigma(2i-1),\sigma(2i))$ is a good pair.  

We now demonstrate anticoncentration of $(A_{\sigma(2i-1)}-A_{\sigma(2i)})\cdot x$ for $i\in\mc{P}$. Let $y$ be the length $|S_{\sigma(2i-1),\sigma(2i)}|$ vector of $\pm 1$ values in $(A_{\sigma(2i-1)}-A_{\sigma(2i)})|_{S_{\sigma(2i-1),\sigma(2i)}}$ (noting that the rest of the vector is deterministically $0$). It is sum $0$ and uniform on this slice. Consider the linear function $f(y) = (A_{\sigma(2i-1)}-A_{\sigma(2i)})\cdot x$. Then, the hypercontractivity of linear functions on the central slice of the Boolean hypercube (cf. \cite[Lemma~5.2]{FM19}) shows that there exists some absolute constant $C \geq 1$ for which 
%We use \cref{lem:moment-hypercontractivity} applied to the linear function $f(y) = (A_{2i-1}-A_{2i})\cdot x$ (with variables $y$ corresponding to the switches $S_{2i-1,2i}$ in $A_{2i-1}-A_{2i}$ which are $\pm 1$-valued with sum $0$). This demonstrates
\[\mb{E}[|f(y)|^4]\le C^4\mb{E}[f(y)^2]^2.\]
Then, setting $\lambda^2 = \mb{E}[f(y)^2]/2$, the Paley-Zygmund inequality in \cite[Lemma~3.5]{LPRT05} gives %in \cref{lem:paley-zygmund} gives
\[\mb{P}[|f(y)| > \lambda]\ge\frac{\mb{E}[f(y)^2]^2}{4\mb{E}[f(y)^4]}\ge\frac{1}{4C^4}.\]
%Here clearly $C\ge 1$. 
Noting that 
\[2\lambda^2 = \mb{E}[f(y)^2] = \frac{w_{2i-1,2i}}{|S_{2i-1,2i}|-1}\gtrsim_\lambda 1\]
for $i\in\mc{P}$, it follows that there exists some $c'_\lambda > 0$ such that for all $i \in \mc{P}$,
\[\mb{P}[|f(y)| > c'_{\lambda}]\ge\frac{1}{4C^4}.\]
%for some $c_\lambda$.

Finally, since $A_{\sigma(2i-1)} - A_{\sigma(2i)}$ are conditionally independent given $\mc{G}$, and since $\snorm{Ax}_2 \geq \sum_{i \in \mc{P}} ((A_{\sigma(2i-1)} - A_{\sigma(2i)})\cdot x)^{2}$, it follows from tensorization (cf. \cite[Lemma~2.2(2)]{rudelson2008littlewood}) that there exists a constant $c_\lambda > 0$ such that for any $G \in \mc{E}_2$,
$$\mb{P}[\snorm{Ax}_2 < c_\lambda \sqrt{n} | \mc{G} = G] \leq \exp(-c_\lambda n).$$
The desired conclusion now follows using the law of total probability, after noting that $\mb{P}[\mc{E}_2^{c}] \leq \exp(-\Omega_{\lambda}(n))$ and after possibly decreasing $c_\lambda > 0$. 
\end{proof}

\section{Rerandomization, Switching, and Quantile Combinatorial LCD}\label{sec:rerandomization}
In this section, we introduce our main new ingredients -- refined switching operations, and the quantile Combinatorial LCD (QCLCD).
\subsection{Rerandomization and switching}\label{sub:switching}
%We now give a decomposition of the elements of the matrix which introduces many ``switches'', that allow for easy randomness. This is directly related to the notion of switching sets defined in \cref{def:switching-set}. 
Fix $(S,\sigma) \in \binom{[n]}{n/2} \times \mf{S}_{[n]}$. 
For $A \in \mc{M}_{n,d}$ with rows $A_i$, let $R_i = A_{\sigma(i)}$ and let $r_i(S)$ (respectively $r_i(S^c)$) denote the sum of $R_i|_S$ (respectively $R_i|_{S^c}$).
\begin{definition}[Revealed information]
\label{def:F}
For $A$ chosen uniformly from $\mc{M}_{n,d}$, let $\mc{F}_{S,\sigma}$ denote the sigma-algebra generated by the collection of random variables 
$$\{r_i(S), r_i(S^c)\}_{i \in [n]} \cup \{(R_{2i-1}+R_{2i})|_S, (R_{2i}+R_{2i+1})|_{S^c}\}_{i \in [\lfloor (n-1)/2 \rfloor]} \cup \{R_1|_{S^c}, R_n|_{P}\},$$
where $P = S$ if $n$ is odd and $P = S^{c}$ if $n$ is even. 
\end{definition}
%Suppose we are given a ``splitting set'' $S$ of size $n/2$ and a permutation $\sigma$ of $[n]$. Let our matrix be $A$, with rows $A_i$, and let $R_i = A_{\sigma(i)}$.
%Now for $i\in[n-1]$ we reveal the sum of the elements of each $R_i|_S$ and $R_i|_{S^c}$. We also reveal the vectors $(R_{2i-1}+R_{2i})|_S$ and $(R_{2i}+R_{2i+1})|_{S^c}$. We also reveal $R_1|_{S^c}$. If $n$ is even we also reveal $R_n|_{S^c}$; if $n$ is odd we also reveal $R_n|_S$. Let the sigma-algebra generated by this revealed information by $\mc{F}_{S,\sigma}$.

The key point is that conditioned on $\mc{F}_{S,\sigma}$, there is additional randomness in the form of each $(R_{2i-1}-R_{2i})|_S$ and $(R_{2i}-R_{2i+1})|_{S^c}$. Note that each of these vectors has many fixed $0$s and some random $\pm 1$ signs (constrained to have a fixed sum), and moreover, that the random $\pm 1$ signs occur precisely where the two rows have a switching set (in the sense of \cref{def:switching-set}), which is measurable given $\mc{F}_{S,\sigma}$. This demonstrates the nomenclature: the sets $S$ allow one to, in the remaining randomness, `switch' between having $01$ in $R_i$ and $10$ in $R_{i+1}$ to $10$ and $01$, respectively. %(although in reality we have large slices to work on). 

We will also make use of the following sets. 
\begin{definition}[Support of remaining randomness]
\label{def:T}
With notation as above, and for each $i \in [\lfloor (n-1)/2 \rfloor]$, let $T_{2i-1} = S\cap S_{\sigma(2i-1),\sigma(2i)}$ (i.e., it is the subset of $S$ such that the entry of $(R_{2i-1}+R_{2i})|_S$ is $1$), and similarly, let $T_{2i} = S^c\cap S_{\sigma(2i),\sigma(2i+1)}$. 
%(i.e., it is the subset of $S^c$ such that the entry of $(R_{2i}+R_{2i+1})|_{S^c}$ is $1$).
Note that these are measurable with respect to $\mc{F}_{S,\sigma}$.
\end{definition}

We note that in the study of the singularity and smallest singular value of random $d$-regular digraphs, the idea of `injecting randomness' using such switching operations goes back to the work of Cook \cite{Cook17b}. The main difference in our switching operation is the introduction of $(\sigma,S) \in \mf{S}_{[n]}\times \binom{[n]}{n/2}$, which will ultimately be chosen from a family $\mc{R}_{\delta, \rho}$ satisfying the conclusion of \cref{lem:robust-split-matching}. As we will see in \cref{eqn:anti-y}, the presence of the set $S$ will ensure that the event of a vector having small image is the tensorization of $n- O(1)$ independent random walks concentrating in a small interval; the crucial point here is that for proving the conjecture of Cook, $n - O(1)$ cannot be replaced by $n - \omega(1)$, whereas the switching construction in \cite{Cook17b} would naively only provide $n/2$ independent random walks. The permutation $\sigma$ dictates the order in which we reveal rows, and its properties will be crucially used in \cref{sub:large-QCLCD} (see the averaging step there), to ensure that the first term in \cref{thm:main} is $\kappa \sqrt{n}$ as opposed to $\kappa n^{1/2 + c}$ for some $c > 0$.    

\subsection{Quantile Combinatorial LCD}\label{sub:QCLCD}
We introduce a notion of arithmetic structure of vectors, which removes the `very worst' CLCDs of certain restrictions of the given vector. %while at the same time guaranteeing a reasonable quality of bounds. This is done by selecting a certain quantile of the spectrum of CLCDs. 
\begin{definition}[Quantile CLCD (QCLCD)]\label{def:QCLCD}
Let $v\in\mb{R}^n$ and $t \in \mb{N}$. Given $t$ sets (possibly repeated) of coordinates $\mc{T} = \{\{T_1,\ldots,T_t\}\}$ and $\ell\in[t]$, we define the quantile combinatorial LCD or $\on{QCLCD}_{\ell,\alpha,\gamma}^{\mc{T}}(v)$ to be the $\ell$th smallest value in the multiset
\[\{\{\on{CLCD}_{\alpha,\gamma}(v|_{T_i}): i\in[t]\}\}.\]
\end{definition}
\begin{remark}
Our notion of $\on{QCLCD}$ can be modified in the obvious way to yield a notion of $\on{QLCD}$ for the standard LCD, which can, for instance, be used to study the simpler model of random $d$-regular digraphs, each of whose non-zero entries is independently replaced by a Rademacher random variable. 
\end{remark}

In the rest of this subsection, we show that $\on{QCLCD}$ is not too small if the family of sets $\mc{T}$ is `well-spread' and the vector is not almost constant.
\begin{definition}[Well-spread family]\label{def:well-spread}
For $Q,t \in \mb{N}, \eta \in (0,1)$, and $U \subseteq [n]$, we say that a multifamily $\mc{U}$ of sets of coordinates $U_i\subseteq U$ for $i\in[t]$ is $(Q,\eta)$-\emph{well-spread with respect to} $U$ if:
\begin{enumerate}[(W1)]
\item (compare with (P1) in \cref{def:Q-Q'-Q''}) for every $Q$ distinct indices $i_1,\ldots,i_Q$, we have
\[\bigg|U\setminus\bigcup_{j=1}^QU_{i_j}\bigg|\le\eta|U|, \quad \text{and}\]
\item (compare with (P2) in \cref{def:Q-Q'-Q''}) for every pair $i,j \in [t]\times [t]$, we have $|U_i\cap U_j|\ge\eta|U|$.
\end{enumerate}
\end{definition}
%We begin by showing that any bi-spread vector %is not almost constant on many restrictions to a well-spread set system of indices, and 
%has large QCLCD with respect to a well-spread set system.
\begin{lemma}[Bi-spread vectors have large QCLCD for well-spread families]\label{lem:lower-bound-D(x)}
Let $S \in \binom{[n]}{n/2}$, and suppose that $\mc{T}_1$ is $(Q,\eta)$-well-spread with respect to $S$ and $\mc{T}_2$ is $(Q,\eta)$-well-spread with respect to $S^c$. Let $x \in \mb{S}^{n-1}$, and suppose that $x$ satisfies $\mc{J}_\nu(x,S)$. Then, for $\mc{T} = \mc{T}_1\cup\mc{T}_2$ (as a multifamily), we have 
\begin{enumerate}[(C1)]
\item There are at most $2Q$ sets $T \in \mc{T}$ (with multiplicity) for which 
\begin{align}
\label{eqn:restriction-constant}
\snorm{D(x|_T)}_2\lesssim_{\nu,\eta,Q}\sqrt{n}\quad \on{or}\quad x|_T\in\on{Cons}_{\delta',\rho'};
\end{align}
%holds is at most $2Q$,
\item $\on{QCLCD}_{2Q,\alpha,\gamma}^{\mc{T}}(x)\gtrsim_{\nu,\eta,Q}\sqrt{n}$,
\end{enumerate}
as long as $\eta\ll_\nu 1$, $\delta',\rho'\ll_{\nu,\eta,Q} 1$, $\gamma \ll _{\nu, \eta, Q} 1$.
\end{lemma}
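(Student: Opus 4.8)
The plan is to prove (C1) first, and then derive (C2) from it. For (C1), I would use a counting argument based on the two well-spread conditions. By \cref{cor:incomp-signed-spread} (or rather the hypothesis $\mc{J}_\nu(x,S)$ directly), $x$ has at least $\nu_1 n$ coordinates in $S$ of size $\ge \nu_2/\sqrt n$ and at least $\nu_1 n$ in $S$ of size $\le -\nu_2/\sqrt n$, and similarly for $S^c$. Call a set $T\in\mc{T}$ \emph{bad} if it satisfies \cref{eqn:restriction-constant}. I claim there cannot be $2Q$ bad sets: if there were, then at least $Q$ of them lie in $\mc{T}_1$ (or $\mc{T}_2$, WLOG $\mc{T}_1$, with respect to $S$), and by condition (W1) the union of these $Q$ sets misses at most $\eta|S|=\eta n/2$ coordinates of $S$. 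Choosing $\eta \ll_\nu 1$, this union therefore contains a constant fraction (at least $\nu_1/4$, say) of the large-positive coordinates \emph{and} of the large-negative coordinates of $x$ restricted to $S$. Some one of these $Q$ bad sets $T$ must then contain at least $(\nu_1 n/4)/Q$ large-positive coordinates and at least $(\nu_1 n /4)/Q$ large-negative coordinates of $x|_S$ — but then on $T$ we have, by the difference $v_i - v_j$ over one large-positive and one large-negative index, many pairs contributing $\gtrsim_\nu 1/n$ to $\snorm{D(x|_T)}_2^2$, forcing $\snorm{D(x|_T)}_2^2 \gtrsim_{\nu,\eta,Q} n$; simultaneously the spread of positive and negative values of size $\Theta(1/\sqrt n)$ forces $x|_T \notin \on{Cons}_{\delta',\rho'}$ for $\delta',\rho' \ll_{\nu,\eta,Q} 1$, contradicting badness of $T$. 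Hence fewer than $2Q$ sets are bad.

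For (C2), recall $\on{QCLCD}_{2Q,\alpha,\gamma}^{\mc{T}}(x)$ is the $2Q$-th smallest value among the $\on{CLCD}_{\alpha,\gamma}(x|_{T_i})$. Since at most $2Q - 1 < 2Q$ of the restrictions are bad, the $2Q$-th smallest CLCD is attained at a \emph{good} $T$, i.e., one with $x|_T \notin \on{Cons}_{\delta',\rho'}$ (and $\snorm{D(x|_T)}_2 \gtrsim_{\nu,\eta,Q}\sqrt n$, hence in particular $\snorm{x|_T}_2 \gtrsim_{\nu,\eta,Q} 1$ — needed so the lower bound is nontrivial). Then \cref{lem:lower-CLCD} applied to $x|_T \in \mb{R}^{|T|}\setminus\on{Cons}_{\delta',\rho'}$, valid for $\gamma < \delta'\rho'/12$, gives
\[
\on{CLCD}_{\alpha,\gamma}(x|_T) \ge \frac{1}{7\snorm{x|_T}_2}\sqrt{\delta' |T|} \gtrsim_{\nu,\eta,Q} \sqrt{n},
\]
using $\snorm{x|_T}_2 \le \snorm{x}_2 = 1$ and $|T| \gtrsim_{\eta} n$ (which follows from (W2), say $|T_i| \ge |T_i \cap T_i| \ge \eta|S|$, or more simply from the positive-coordinate count). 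This is exactly (C2). The constraint hierarchy $\eta \ll_\nu 1$, then $\delta',\rho' \ll_{\nu,\eta,Q}1$, then $\gamma \ll_{\nu,\eta,Q}1$ is dictated by the order in which these choices are forced above.

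The main obstacle is the first step of (C1): quantifying carefully how, from $Q$ bad sets covering all but an $\eta$-fraction of $S$, one extracts a \emph{single} bad set that is simultaneously rich in both large-positive and large-negative coordinates, and then turning that richness into a quantitative lower bound on $\snorm{D(x|_T)}_2$ that contradicts the first alternative in \cref{eqn:restriction-constant} while the spread contradicts the second. This is essentially a pigeonhole-plus-Cauchy-Schwarz computation in the spirit of the proof of \cref{cor:incomp-signed-spread}, but one has to be attentive that the two alternatives in \cref{eqn:restriction-constant} are ruled out by two genuinely different features (a large $D$-norm versus non-almost-constancy) and that both follow from the same pigeonholed set $T$. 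Everything else — the passage from (C1) to (C2), and the bookkeeping of the implied constants — is routine given \cref{lem:lower-CLCD}.
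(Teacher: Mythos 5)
The gap is precisely at the step you yourself flag as ``the main obstacle'': you assert that from $Q$ bad sets whose union covers all but an $\eta$-fraction of $S$, a \emph{single} bad set $T$ must contain $\gtrsim\nu_1 n/Q$ large-positive \emph{and} $\gtrsim\nu_1 n/Q$ large-negative coordinates of $x|_S$. This does not follow from (W1) and pigeonholing alone — one bad set could absorb essentially all the large positives and a different bad set essentially all the large negatives — and despite announcing that you will ``use a counting argument based on the two well-spread conditions,'' your sketch never actually invokes (W2). The paper's resolution is exactly to bring in (W2) at this point: pigeonholing produces a set $T^+$ with $\gtrsim\nu_1 n/Q$ large-positive coordinates and a (possibly different) set $T^-$ with $\gtrsim\nu_1 n/Q$ large-negative coordinates; (W2) gives $|T^+\cap T^-|\ge\eta|S|$, so that intersection has either $\gtrsim\eta n$ nonnegative coordinates or $\gtrsim\eta n$ negative ones; in the first case $T^-$ itself then has linearly many nonnegative coordinates (from the intersection) together with linearly many coordinates $\le-\nu_2/\sqrt n$, which simultaneously forces $\snorm{D(x|_{T^-})}_2\gtrsim_{\nu,\eta,Q}\sqrt n$ and $x|_{T^-}\notin\on{Cons}_{\delta',\rho'}$, contradicting badness (the other case is symmetric with $T^+$). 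So the single set's ``richness'' the paper extracts is in nonnegative-versus-large-negative coordinates, not large-positive-versus-large-negative as you describe, but that is enough, and (W2) is the essential ingredient missing from your sketch.

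The passage from (C1) to (C2) via \cref{lem:lower-CLCD}, using $\snorm{x|_T}_2\le1$ and $|T|\gtrsim_\eta n$ (which itself comes from (W2)) to get $\on{CLCD}_{\alpha,\gamma}(x|_T)\gtrsim_{\delta'}\sqrt n$ for every good $T$ once $\gamma<\delta'\rho'/12$, matches the paper and is fine. One small accounting point you implicitly rely on: for the $2Q$-th smallest CLCD to be attained at a good set you need ``at most $2Q-1$ bad sets,'' not merely ``at most $2Q$''; the contradiction argument does in fact deliver this, since already $2Q$ bad sets would pigeonhole $Q$ of them into one of $\mc{T}_1,\mc{T}_2$ and trigger the contradiction.
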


\begin{proof}
Suppose for the sake of contradiction that (C1) is false.  By the pigeonhole principle, at least $Q$ of the sets (with multiplicity) satisfying \cref{eqn:restriction-constant} lie in $S$ or $S^c$; without loss of generality assume that $Q$ of these sets lie in $S$. Then, since $S$ has at least $\nu_{1}n$ indices $j$ for which $x_j \in [\nu_{2}/\sqrt{n}, \nu_{3}/\sqrt{n}]$, it follows from (W1) of \cref{def:well-spread} that for $\eta\le\nu_1/2$, at least one of the $Q$ sets satisfying \cref{eqn:restriction-constant} has $\nu_1n/(2Q)$ positive coordinates between $[\nu_2/\sqrt{n},\nu_3/\sqrt{n}]$. A similar argument shows that at least one of the $Q$ sets satisfying \cref{eqn:restriction-constant} has $\nu_1 n/(2Q)$ negative coordinates between $[-\nu_3/\sqrt{n},-\nu_2/\sqrt{n}]$. Consider the common intersection of these two sets, which by (W2) of \cref{def:well-spread} has size at least $\eta n$, and note that this intersection has either $\eta n/2$ nonnegative coordinates or $\eta n/2$ negative coordinates. Without loss of generality, suppose that there are at least $\eta n/2$ nonnegative coordinates. But then, for $T$ being the set with at least $\nu_{1} n/(2Q)$ coordinates between $[-\nu_3/\sqrt{n},-\nu_2/\sqrt{n}]$, we see that $\snorm{D(x|_{T})}_{2} \geq \sqrt{\nu_{1}\nu_{2}\eta/2Q}\cdot\sqrt{n}$ (and also, $x|_{T}$ is clearly not in $\on{Cons}_{\delta', \rho'}$)  which contradicts that $T$ satisfies \cref{eqn:restriction-constant}.

Finally, for (C2), note that for every set $T \in \mc{T}$ for which $x|_{T} \notin \on{Cons}_{\delta',\rho'}$, it follows from \cref{lem:lower-CLCD} and $\snorm{x|_{T}}_{2} \leq 1$ that $\on{CLCD}_{\alpha,\gamma}(x|_T)\gtrsim_{\delta',\rho'}\sqrt{N}$ as long as $\gamma \in (0,\delta'\rho'/12)$. Then, the conclusion follows immediately from (C1) and the definition of QCLCD.  
%This immediately implies the result as if there are $\eta n/2$ nonnegative coordinates in this common intersection, the set with at least  $\nu_1/(2Q)$ negative coordinates between $[-\nu_3/\sqrt{n},-\nu_2/\sqrt{n}]$ has many large differences in $D(x|_T)$.
\end{proof}
% The previous lemma immediately shows that the $\on{QCLCD}$ of bi-spread vectors cannot be small on a well-spread set system.
% \begin{lemma}\label{lem:lower-bound-QCLCD}[Bi-spread vectors have large QCLCD for a well-spread family]
% \todo{Merge this with previous proof}
% Let $S \in \binom{[n]}{n/2}$, and suppose that $\mc{T}_1$ is $(Q,\eta)$-well-spread with respect to $S$ and $\mc{T}_2$ is $(Q,\eta)$-well-spread with respect to $S^c$. Let $x \in \mb{S}^{n-1}$, and suppose that $x$ satisfies $\mc{J}_\nu(x,S)$ with $|\mc{T}_1| = |\mc{T}_2| = t$. Then, for $\mc{T} = \mc{T}_1\cup\mc{T}_2$ (as a multifamily), we have that
% \[\on{QCLCD}_{2Q,\alpha,\gamma}^{\mc{T}}(x)\gtrsim_{\nu,\eta,Q}\sqrt{n}\]
% as long as $\eta\ll_\nu 1$ and $\gamma \ll _{\nu, \eta, Q} 1$.
% \end{lemma}
% \begin{proof}
% By \cref{lem:lower-bound-D(x)}, we have at least $2t-(2Q-1)$ sets $T\in\mc{T}$ with
% %\[\snorm{D(x|_{T})}_{2} \gtrsim_{\nu, \eta, Q}\sqrt{n}.\]
% %In particular, it follows (cf. \cite[Lemma~2.2]{Tra20}) that for all such sets $T$,
% \[x|_T\notin\on{Cons}_{\delta',\rho'}\]
% for $\delta',\rho' \ll_{\nu, \eta, Q} 1$. Thus, by \cref{lem:lower-CLCD} and $\snorm{x|_T}_2\le 1$ we obtain
% \[\on{CLCD}_{\alpha,\gamma}(x|_T)\gtrsim_{\delta',\rho'}\sqrt{N}\]
% as long as $\gamma\in(0,\delta'\rho'/12)$. 
% \end{proof}

%\section{Randomized Rounding and Compressible Vectors}\label{sec:nets}
%\subsection{Randomized Rounding}\label{sub:randomized-rounding}
\subsection{Nets for \texorpdfstring{$\on{QCLCD}$}{QCLCD}}\label{sub:nets-QCLCD}
In this subsection, we will construct sufficiently small nets for level sets of the QCLCD. 

\begin{definition}[Level sets of QCLCD]\label{def:level-set-QCLCD}
Fix a set system $\mc{T}$, an integer $Q \in \mb{N}$, $\nu=(\nu_1,\nu_2,\nu_3) \in \mb{R}^{3}$ with $\nu_i > 0$, $\mu \in (0,1)$, and $S\in\binom{[n]}{n/2}$. Suppose $H > 0$. We define
\[K_{\mc{T},H,\mu} = \{x\in\mb{S}^{n-1}: \mc{J}_\nu(x,S)\wedge H\le\on{QCLCD}_{2Q,\mu n,\gamma}^{\mc{T}}(x)\le 2H\}.\]
%so we are choosing $\alpha = \mu n$.
\end{definition}

Our goal is to show the following. 

\begin{lemma}[Nets for level sets of QCLCD]\label{lem:HS-net-QCLCD}
With notation as in \cref{def:level-set-QCLCD} and $\theta \in (0,1)$, suppose that $\mc{T}_1$ is $(Q,\eta)$-well-spread with respect to $S$ and $\mc{T}_2$ is $(Q,\eta)$-well-spread with respect to $S^c$, with each set in $\mc{T}=\mc{T}_1\cup\mc{T}_2$ of size at least $2\theta n$.
Assume that $0 < \delta,\rho\ll 1$, $0 < \mu\ll_{\delta,\rho}\gamma\ll_{\delta,\rho}1$, and $H\gtrsim_{\delta,\rho,\gamma}\sqrt{n}$. Then, there exists a collection $\mc{N}\subseteq K_{\mc{T},H,\mu}+(200\mu\sqrt{n}/H)B_2^n$ such that for every $x\in K_{\mc{T},H,\mu}$ and $m\times n$ matrix $B$, there is a point $y\in\mc{N}$ with
\[\snorm{B(x-y)}_2\le\frac{100\mu}{H}\snorm{B}_{\on{HS}},\]
and such that
\[|\mc{N}|\le H^3|\mc{T}|\bigg(\frac{C_{\delta,\rho,\gamma,\nu,\eta,Q, \theta}H\mu^{\theta-1}}{\sqrt{n}}\bigg)^n,\]
as long as $n$ is sufficiently large and $|\mc{T}| > 4Q$.
\end{lemma}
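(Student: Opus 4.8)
The plan is to build the net for $K_{\mc{T},H,\mu}$ by combining a good net for a single CLCD level set (\cref{lem:metric-entropy-CLCD}) applied to \emph{one} well-chosen restriction, with a comparatively crude but cheap net for the complementary coordinates, and to exploit the fact that a well-spread family forces at least one linear-sized restriction to land in a fixed CLCD level set. First I would fix $x \in K_{\mc{T},H,\mu}$. Since $x$ satisfies $\mc{J}_\nu(x,S)$, \cref{lem:lower-bound-D(x)} applies: all but at most $2Q$ sets $T \in \mc{T}$ have $\snorm{D(x|_T)}_2 \gtrsim \sqrt{n}$ and $x|_T \notin \on{Cons}_{\delta',\rho'}$, so by \cref{lem:lower-CLCD} those restrictions have $\on{CLCD}_{\mu n,\gamma}(x|_T) \gtrsim_{\delta',\rho'}\sqrt{n}$. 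On the other hand, since $\on{QCLCD}^{\mc{T}}_{2Q,\mu n,\gamma}(x) = \on{CLCD}$ at the $2Q$-th smallest restriction and this is $\le 2H$, among the restrictions that are not bi-spread-forced-large there must be at least one, call it $T = T(x)$, with $\on{CLCD}_{\mu n,\gamma}(x|_T) \le 2H$; combined with $H \gtrsim_{\delta,\rho,\gamma}\sqrt{n}$ and the lower bound, after passing to a dyadic subscale we get $x|_T \in L_{H',\chi,\mu}$ for the level-set set from \cref{def:level-set-CLCD} with $H' \in [H/2, 4H]$ (up to constants), $\chi \gtrsim_{\nu,\eta,Q}$-depending on $\theta$ since $|T| \ge 2\theta n$ forces $\snorm{x|_T}_2 \ge$ a $\theta$-dependent constant... actually $\snorm{x|_T}_2$ can be small, so I would instead normalize: write $x|_T = \snorm{x|_T}_2 \cdot \hat x_T$ with $\hat x_T$ on the sphere, noting $\on{CLCD}$ scales inversely with norm, and handle the $O(1)$ possible values of $\snorm{x|_T}_2$ (to within a multiplicative $(1+\mu)$, giving $H^3$-type overhead) by a separate net on the scalar. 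This is exactly the role of the $H^3|\mc{T}|$ prefactor.

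Next I would assemble the net. There are $|\mc{T}|$ choices for which set $T$ plays the distinguished role, and $O(\log(H/\sqrt{n}))$ — absorbed into the polynomial-in-$H$ prefactor — choices of the dyadic subinterval and a $\mu$-fine scalar net for $\snorm{x|_T}_2$; this accounts for the $H^3|\mc{T}|$ factor. For the coordinates in $T$: by \cref{lem:metric-entropy-CLCD} there is a net of $L_{H',\chi,\mu}$ (on the $|T|$-dimensional sphere) of size at most $\mu^{-3}(H')^3(C_{\delta,\rho,\gamma,\chi,\zeta}H'/\sqrt{|T|})^{|T|}$, at scale $9\mu\sqrt{|T|}/H'$; rescaling by $\snorm{x|_T}_2 \le 1$, this yields a net for $x|_T$ at scale $\le 9\mu\sqrt{n}/H' \lesssim \mu\sqrt{n}/H$ in $\ell^2$, and since $|T| \ge 2\theta n$, the base of the exponent is $C_{\delta,\rho,\gamma,\chi,\zeta}H'/\sqrt{2\theta n} \lesssim_\theta H/\sqrt{n}$ and there are only $|T| \le n$ coordinates, so this contributes a factor $(C H/\sqrt{n})^{|T|}$ with no $\mu$ in the base. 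For the coordinates in $[n]\setminus T$ (there are $\le (1-2\theta)n$ of them but I only need $\le n$): here $x|_{T^c}$ lies in a ball of radius $\le 1$ in $\mb{R}^{|T^c|}$, and I net it at scale $\mu\sqrt{n}/H$ using a volumetric net, which costs $(C\sqrt{|T^c|}\cdot H/(\mu\sqrt{n}))^{|T^c|} \le (CH/(\mu\sqrt{n}))^{|T^c|}$ — here the $\mu^{-1}$ appears, but only raised to the power $|T^c| \le (1-2\theta)n$, which is the source of the $\mu^{\theta-1}$ (since $\mu^{\theta - 1} = \mu^{-(1-\theta)} \ge \mu^{-(1-2\theta)}$ for $\mu \le 1$, with room to spare). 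Multiplying the two pieces and the prefactor, the base of the exponent becomes $C_{\delta,\rho,\gamma,\nu,\eta,Q,\theta}H\mu^{\theta-1}/\sqrt{n}$ to the power $\le |T| + |T^c| = n$, matching the claimed bound.

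For the approximation guarantee, given $x$ and an arbitrary $m\times n$ matrix $B$, pick $y \in \mc{N}$ agreeing with $x$'s distinguished set choice and with $\snorm{y|_T - x|_T}_2 \le 9\mu\sqrt{n}/H$ and $\snorm{y|_{T^c}-x|_{T^c}}_2 \le \mu\sqrt{n}/H$, so $\snorm{x-y}_2 \le 10\mu\sqrt{n}/H$. Splitting $B(x-y) = B\Pi_T(x-y) + B\Pi_{T^c}(x-y)$ and using $\snorm{B\Pi_T}_{\on{HS}}, \snorm{B\Pi_{T^c}}_{\on{HS}} \le \snorm{B}_{\on{HS}}$ together with $\snorm{B v}_2 \le \snorm{B}_{\on{HS}}\snorm{v}_2$ coordinate-block-wise, one gets $\snorm{B(x-y)}_2 \le (9+1)\mu\sqrt{n}/H \cdot \snorm{B}_{\on{HS}} \le (100\mu/H)\snorm{B}_{\on{HS}}$; that $\mc{N} \subseteq K_{\mc{T},H,\mu}+(200\mu\sqrt{n}/H)B_2^n$ is immediate from $\snorm{x-y}_2 \le 10\mu\sqrt{n}/H \le 200\mu\sqrt{n}/H$ for the $y$ we produce, and one can simply discard any net point not within this ball of some element of $K_{\mc{T},H,\mu}$ without affecting the covering property.

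I expect the main obstacle to be bookkeeping two things simultaneously: (i) ensuring the distinguished restriction $T(x)$ genuinely lands in the \emph{fixed} level set $L_{H',\chi,\mu}$ whose net from \cref{lem:metric-entropy-CLCD} has a $\mu$-independent base — this is where \cref{lem:lower-bound-D(x)}'s conclusion that all-but-$2Q$ restrictions have CLCD $\gtrsim\sqrt{n}$ is essential (it guarantees the $2Q$-th smallest CLCD is witnessed by a restriction that is itself well-behaved, i.e.\ $\snorm{D(x|_T)}_2\gtrsim\sqrt n$ and $x|_T\notin\on{Cons}_{\delta',\rho'}$, so the hypotheses of \cref{def:level-set-CLCD} are met after normalizing), and where the hypothesis $|\mc{T}| > 4Q$ is used so that "all-but-$2Q$" still leaves a $T$ with CLCD $\le 2H$ to choose; and (ii) chasing the constant $\chi$ through: since $\snorm{x|_T}_2$ need not be $\Omega(1)$, the cleanest route is to normalize and absorb the scalar into the prefactor, so $\chi$ in \cref{lem:metric-entropy-CLCD} can be taken to be any fixed small constant (the normalized vector is a unit vector), and its dependence is folded into $C_{\delta,\rho,\gamma,\nu,\eta,Q,\theta}$. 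The inequality chain on the exponent bases — verifying $(C_\theta H/\sqrt{n})^{|T|}\cdot(CH/(\mu\sqrt n))^{|T^c|} \le (C_\theta' H\mu^{\theta-1}/\sqrt n)^n$ using $|T^c|\le(1-2\theta)n\le(1-\theta)n$ — is the one genuinely computational point, but it is elementary.
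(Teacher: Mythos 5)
Your bookkeeping for the \emph{size} of the net is essentially the paper's: pick a distinguished $T\in\mc{T}$ using \cref{lem:lower-bound-D(x)} to ensure $x|_T$ lands in a genuine CLCD level set (the hypothesis $|\mc{T}|>4Q$ is exactly what makes such a $T$ available), apply \cref{lem:metric-entropy-CLCD} to $x|_T$, take a crude volumetric net for $x|_{T^c}$, multiply, and account for the $|\mc{T}|$ choices and the scalar overhead via the $H^3|\mc{T}|$ prefactor — with $|T|\geq 2\theta n$ localizing the $\mu^{-1}$ to at most $(1-\theta)n$ coordinates. All of that matches the intended argument.

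The gap is in the final step, the approximation guarantee, and it is fatal. You build a $\beta$-net with $\beta\asymp\mu\sqrt n/H$ and then estimate
$\snorm{B(x-y)}_2 \le \snorm{B}_{\on{HS}}\snorm{x-y}_2 \lesssim \frac{\mu\sqrt n}{H}\snorm{B}_{\on{HS}}$,
but then assert this is $\le\frac{100\mu}{H}\snorm{B}_{\on{HS}}$, which is false by a factor of $\sqrt{n}$. That missing $\sqrt n$ is not slack: in the application $B=A$ has $\snorm{A}_2\asymp n\asymp\snorm{A}_{\on{HS}}$, so the naive bound gives $\mu n^{3/2}/H$ rather than the $\mu n/H$ you need, and the union bound in \cref{sub:small-QCLCD} would not close. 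This is precisely the content of the remark after the lemma statement about the operator norm being ``unusually large'' here. The paper's proof does not use a bare $\beta$-net: it first bounds the covering number $N(K_{\mc{T},H,\mu},\beta B_2^n)$ (this is where your size computation belongs), and then passes through Livshyts' randomized-rounding net theorem (\cref{thm:general-HS-net}) with $\alpha=1/3$, which upgrades an ordinary $\beta$-net into a net $\mc N$ satisfying $\snorm{B(x-y)}_2\le\frac{2\beta}{\alpha\sqrt n}\snorm{B}_{\on{HS}}$ at only a $C^n$ cost in cardinality. The extra $1/\sqrt n$ in Livshyts' guarantee is exactly the factor your direct argument is missing; no coordinate-block decomposition of $B$ can recover it, since $\snorm{B\Pi_T}_{\on{HS}}\le\snorm B_{\on{HS}}$ is lossless and $\snorm{B_T v}_2\le\snorm{B_T}_{\on{HS}}\snorm v_2$ is the same weak bound you started from.
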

\begin{remark}
The critical point here is that -- compared to a $(200\mu \sqrt{n}/H)$-net obtained using the usual volumetric argument, which would have dependence $\mu^{n}$ in the size of the net -- the above net has the improved dependence $\mu^{(\theta - 1)n}$; this saving of $\mu^{\theta n}$ will be crucial for us. Another important point is the appearance of $\snorm{B}_{\on{HS}}$ (as opposed to the standard $\sqrt{n}\snorm{B}_{2}$), since the operator norm is `unusually large' compared to the Hilbert-Schmidt norm in our application (although this point can likely be bypassed, see the remark after \cref{thm:general-HS-net}).    
\end{remark}
The key ingredient 
%In this section, we state HS-quality nets based on randomized rounding given by Livshyts \cite{Liv18}; 
in the proof of this lemma is the following randomized-rounding based net construction due to Livshyts \cite{Liv18}. 
\begin{theorem}[Specialization of {\cite[Theorem~4]{Liv18}}]\label{thm:general-HS-net}
There exists an absolute constant $C_{\ref{thm:general-HS-net}} > 0$ for which the following holds. Fix $\alpha\in(0,1/2)$ and $\beta\in(0,\alpha/10)$. Consider any $K\subseteq\mb{S}^{n-1}$ and $n\ge 1/\alpha^2$. Then, there exists a deterministic net $\mc{N}\subseteq K + (4\beta/\alpha)B_2^n$ such that for every $x\in K$ and $m\times n$ matrix $B$, there is a point $y \in \mc{N}$ with 
\[\snorm{B(x-y)}_2\le\frac{2\beta}{\alpha\sqrt{n}}\snorm{B}_{\on{HS}},\]
%simultaneously for every $m\times n$ matrix $B$, 
and such that 
%Furthermore, we can choose $\mc{N}$ such that 
\[|\mc{N}|\le N(K,\beta B_2^n)\exp(C\alpha^{0.08}\log(1/\alpha)n),\]
where $N(K,\beta B_2^n)$ is the covering number of the set $K$ with balls of radius $\beta$.
\end{theorem}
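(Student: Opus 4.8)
The plan is to reduce the statement to a bound on the ordinary $\ell^2$-covering number of $K_{\mc{T},H,\mu}$ at the single scale $\beta\asymp\mu\sqrt n/H$, and then invoke \cref{thm:general-HS-net} essentially verbatim to upgrade that covering into a net with the advertised Hilbert--Schmidt approximation property. Concretely, fix $\alpha$ to be a small absolute constant and set $\beta:=50\mu\alpha\sqrt n/H$; since $H\gtrsim\sqrt n$ and $\mu$ is small we have $\beta\in(0,\alpha/10)$, and \cref{thm:general-HS-net} applied to $K=K_{\mc{T},H,\mu}\subseteq\mb{S}^{n-1}$ produces a net $\mc{N}\subseteq K_{\mc{T},H,\mu}+(200\mu\sqrt n/H)B_2^n$ such that every $x\in K_{\mc{T},H,\mu}$ and every $m\times n$ matrix $B$ admit $y\in\mc N$ with $\snorm{B(x-y)}_2\le(100\mu/H)\snorm{B}_{\on{HS}}$, of cardinality $N(K_{\mc{T},H,\mu},\beta B_2^n)\cdot e^{C\alpha^{0.08}\log(1/\alpha)n}$. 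As $\alpha$ is a constant the last factor is only $C'^{\,n}$, so it suffices to prove $N(K_{\mc{T},H,\mu},\beta B_2^n)\lesssim H^3|\mc{T}|(CH\mu^{\theta-1}/\sqrt n)^n$.

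To cover $K_{\mc{T},H,\mu}$, fix $x$ and let $T^\ast=T^\ast(x)\in\mc{T}$ realize the $2Q$-th smallest element of $\{\{\on{CLCD}_{\mu n,\gamma}(x|_T):T\in\mc{T}\}\}$, so that $\on{CLCD}_{\mu n,\gamma}(x|_{T^\ast})=\on{QCLCD}^{\mc{T}}_{2Q,\mu n,\gamma}(x)\in[H,2H]$; by \cref{lem:lower-bound-D(x)} (this is where the hypothesis $|\mc{T}|>4Q$ enters) at most $2Q$ of the restrictions are almost-constant or have $D$-norm $o(\sqrt n)$, which lets us control the structure of $x|_{T^\ast}$ in the cases below. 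The net is a union over the $|\mc{T}|$ choices of $T^\ast$, and for each fixed $T^\ast$ it is a product of a $\beta$-net for the coordinates of $T^\ast$ (there are $|T^\ast|\ge 2\theta n$ of them) with a crude volumetric $\beta$-net of the unit ball of $\mb{R}^{[n]\setminus T^\ast}$, the latter of size $(3/\beta)^{n-|T^\ast|}=(C_\alpha H/(\mu\sqrt n))^{n-|T^\ast|}$. For the $T^\ast$-coordinates, when $x|_{T^\ast}\notin\on{Cons}_{\delta,\rho}$ we use \cref{lem:lower-CLCD} and the scale invariance of $\on{Cons}$ to place $x|_{T^\ast}/\snorm{x|_{T^\ast}}_2$, after dyadically splitting $\snorm{x|_{T^\ast}}_2$ into the polynomially many relevant scales, into a level set $L_{H',\chi,\mu'}$ of \cref{def:level-set-CLCD} with $\chi=1/2$ and $\mu':=\mu n/|T^\ast|$ (so the $\on{CLCD}$ parameter $\mu'|T^\ast|$ agrees with the parameter $\mu n$ of the QCLCD); crucially $H'\lesssim H$ because $\snorm{x|_{T^\ast}}_2\le 1$ and $\on{CLCD}_{\mu n,\gamma}(x|_{T^\ast})\le 2H$, while $H'\gtrsim\sqrt n\ge\zeta\sqrt n$ because the $\on{CLCD}$ of a non-almost-constant unit vector is $\gtrsim\sqrt n$. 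Applying \cref{lem:metric-entropy-CLCD} to $L_{H',\chi,\mu'}$, rescaling back, and refining by a $\theta$-dependent constant factor so that the scale becomes at most $\beta$ (which costs only an extra $C_\theta^{\,n}$), yields a $\beta$-net of the $T^\ast$-coordinates of size $\lesssim\on{poly}(H,1/\mu)(CH/\sqrt n)^{|T^\ast|}$. When instead $x|_{T^\ast}\in\on{Cons}_{\delta,\rho}$, the bound $\on{CLCD}_{\mu n,\gamma}(x|_{T^\ast})\le 2H$ forces the restriction of $x|_{T^\ast}$ to its $(1-\delta)|T^\ast|$ non-deviating coordinates to be, at the relevant scale, $\ell^2$-close to a constant vector; covering the constant value, the choice of deviating-coordinate set, and the vector on those coordinates by elementary estimates then yields a $\beta$-net of the $T^\ast$-coordinates of size at most $\on{poly}(H)\cdot C^n(H/\sqrt n)^{|T^\ast|}\mu^{-\delta|T^\ast|}$, which for $\delta$ small is of the same order.

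Multiplying the $T^\ast$-net by the complement net, using $|T^\ast|\ge 2\theta n$, and summing over the $|\mc{T}|$ choices of $T^\ast$ gives
\[
N(K_{\mc{T},H,\mu},\beta B_2^n)\;\lesssim\;\on{poly}(H,1/\mu)\,|\mc{T}|\Big(\tfrac{CH}{\sqrt n}\Big)^{n}\mu^{-(n-|T^\ast|)}\;\le\;\on{poly}(H,1/\mu)\,|\mc{T}|\Big(\tfrac{CH}{\sqrt n}\Big)^{n}\mu^{-(1-\theta)n},
\]
since $n-|T^\ast|\le(1-2\theta)n\le(1-\theta)n$ and $\mu<1$; absorbing the $\on{poly}(H,1/\mu)$ factor into $H^3$ and into the base (legitimate for $n$ large, as it contributes a factor tending to $1$ per coordinate) and then composing with \cref{thm:general-HS-net} as above produces the claimed net and bound with $C=C_{\delta,\rho,\gamma,\nu,\eta,Q,\theta}$.

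The crux — and the main obstacle — is arranging the $T^\ast$-coordinate net so that the base of the exponential comes out as $(CH/\sqrt n)^{|T^\ast|}$ with $|T^\ast|\ge 2\theta n$: this is exactly what yields the saving $\mu^{\theta-1}$ rather than $\mu^{-1}$ over the trivial volumetric net. Three things must be reconciled at once: (i) the $\on{CLCD}$ parameter passed to \cref{lem:metric-entropy-CLCD} must equal the one used to define the QCLCD, which pins $\mu'=\mu n/|T^\ast|$; (ii) the level-set parameter actually fed to \cref{lem:metric-entropy-CLCD} must be confined to $[\,\Omega(\sqrt n),O(H)\,]$ rather than ranging up to the trivial exponential bound on the $\on{CLCD}$ — this is why $T^\ast$ is taken to realize the $2Q$-th smallest $\on{CLCD}$, which by definition lies in $[H,2H]$; and (iii) the almost-constant restrictions must be quarantined and estimated by hand, using the upper bound on their $\on{CLCD}$ to promote $\ell^\infty$-closeness to a constant into the $\ell^2$-closeness needed for a net of the right size. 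Once these are in place the remaining steps, including the reduction to \cref{thm:general-HS-net}, are routine bookkeeping.
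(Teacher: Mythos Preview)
You have not addressed the stated theorem. \Cref{thm:general-HS-net} is a specialization of Livshyts's randomized-rounding net theorem, quoted from \cite{Liv18}; the paper provides no proof and uses it purely as a black box. Your write-up is instead a proof sketch of \cref{lem:HS-net-QCLCD} (nets for level sets of the QCLCD), and it \emph{invokes} \cref{thm:general-HS-net} in its very first paragraph. Read as a proof of \cref{thm:general-HS-net}, the argument is circular.

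If your intended target was \cref{lem:HS-net-QCLCD}, then your outline is essentially the paper's: reduce to an ordinary covering-number bound for $K_{\mc T,H,\mu}$ at scale $\asymp\mu\sqrt n/H$, then feed that bound into \cref{thm:general-HS-net} with a fixed absolute constant $\alpha$ (the paper takes $\alpha=1/3$). On a distinguished $T\in\mc T$ one uses \cref{lem:metric-entropy-CLCD} to obtain the $\mu$-free factor $(CH/\sqrt{|T|})^{|T|}$; on the complement one takes the trivial volumetric net; and one pays a factor $|\mc T|$ for the choice of $T$. The one substantive difference is how the distinguished set is selected. You pin $T^\ast$ to be the realizer of the $2Q$-th smallest CLCD and then split according to whether $x|_{T^\ast}\in\on{Cons}_{\delta,\rho}$, treating the almost-constant case by an ad hoc estimate. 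The paper instead argues, via $|\mc T|>4Q$ together with \cref{lem:lower-bound-D(x)}, that one can directly choose a $T$ with both $\on{CLCD}_{\mu n,\gamma}(x|_T)\in[H,2H]$ and $x|_T\notin\on{Cons}$ (equivalently $\snorm{D(x|_T)}_2\gtrsim\sqrt n$), so that \cref{lem:metric-entropy-CLCD} applies with no case analysis. Your ad hoc step --- that $\on{CLCD}(x|_{T^\ast})\le 2H$ forces $\ell^2$-closeness to a constant at scale $\mu\sqrt n/H$ on the non-deviating coordinates --- is asserted but not justified, and it is not obvious; the paper's direct selection of $T$ sidesteps this entirely.
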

\begin{remark}
In \cite{Liv18}, the above statement is proved with $\snorm{B}_{\on{HS}}$ replaced by a certain regularized Hilbert-Schmidt norm (which is always at most the standard Hilbert-Schmidt norm), and in fact, a considerable amount of the effort in \cite{Liv18} is devoted to obtaining this more refined quantity on the right hand side. For our application, this is unnecessary since all matrices $B$ to which we will need to apply \cref{thm:general-HS-net,lem:HS-net-QCLCD} are $\{0,1\}$-valued, and hence, have $\snorm{B}_{\on{HS}} \leq \sqrt{mn}$ -- in particular, this permits a much more streamlined proof (using the techniques in \cite{Liv18}) of \cref{thm:general-HS-net} than the general \cite[Theorem~4]{Liv18}. 
%in  and  our situation is simplified versus the generality given in \cite{Liv18} since the Hilbert-Schmidt norm in our model is deterministically $\sqrt{dn}$, and in particular less than $n$. 
We also note that one can replace the use of \cref{thm:general-HS-net} with a spectral gap estimate (as in \cite{jain2019approximate,Tra20}) for $d$-regular digraphs, which can likely be derived from more recent and refined asymptotic enumeration results due to Barvinok and Hartigan \cite{BH13}; this approach is substantially more technical and hence we have decided to use \cite{Liv18} instead.
\end{remark}

%Now we derive good HS-quality nets for $\on{QCLCD}$. We define an appropriate notion of level set.

\begin{proof}[Proof of \cref{lem:HS-net-QCLCD}]
Let $\beta = (20\mu\sqrt{n}/H)$. We will bound $N(K_{\mc{T},H,\mu}, \beta B_{2}^{n})$, at which point the result will follow immediately from \cref{thm:general-HS-net} (with $\alpha = 1/3$). In order to do this, we will construct a $\beta$-net for $K_{\mc{T}, H, \mu}$ and bound its size. 
%We first construct a $(16\mu\sqrt{n}/H)$-net for $K_{\mc{T},H,\mu}$, after which the result follows trivially from \cref{thm:general-HS-net}, setting $\alpha = 1/3$.

If $x\in K_{\mc{T},H,\mu}$, then by definition, at least $|\mc{T}|-(2Q-1)$ of the sets $T\in\mc{T}$ have
\[\on{CLCD}_{\mu n,\gamma}(x|_T)\in[H,2H].\]
Moreover, by \cref{lem:lower-bound-D(x)}, at least $|\mc{T}|-2(2Q-1)$ of these $|\mc{T}| - (2Q-1)$ sets $T$ additionally satisfy
\[\snorm{x}_2\sqrt{n}\ge\snorm{D(x|_T)}_2\gtrsim_{\nu,\eta,Q}\sqrt{n}.\]
Since $|\mc{T}| > 4Q$, we can choose $T\in\mc{T}$ satisfying both of the above equations. For the rest of the proof, fix such a set $T \in \mc{T}$; at the end, we will introduce an overall multiplicative factor of $|\mc{T}|$ in the size of the net to account for this choice. 

%We take a union of nets for these cases (paying a factor of $|\mc{T}|$ in the process). Now suppose both conditions are satisfied for $T\in\mc{T}$. 
We note that by \cref{lem:metric-entropy-CLCD} applied with $\chi$ and $\zeta$ constants depending on $\nu,\eta,Q$, there is a $(9\mu\sqrt{|T|}/H)$-net for $x|_T$ of size at most
\[\mu^{-3}H^3\bigg(\frac{C_{\delta,\rho,\gamma,\nu,\eta,Q}H}{\sqrt{|T|}}\bigg)^{|T|}.\]
We take a $(9\mu\sqrt{n}/H)$-net of $B_2^{n-|T|}$ for $x|_{T^c}$ (with size bounded by the standard volumetric argument), and then take the product net, which has size at most
\[\mu^{-3}H^3\bigg(\frac{C_{\delta,\rho,\gamma,\nu,\eta,Q}H}{\sqrt{|T|}}\bigg)^{|T|}\times\bigg(\frac{4H}{9\mu\sqrt{n}}\bigg)^{n-|T|}\lesssim H^3\bigg(\frac{C_{\delta,\rho,\gamma,\eta,Q, \theta}H\mu^{\theta-1}}{\sqrt{n}}\bigg)^n.\]
In the last step, we used $n\ge |T| > 2\theta n$ and absorbed the $\mu^{-3}$ term into the exponential. The result now follows as indicated in the first paragraph of the proof. 
\end{proof}

%\section{Compressible Vectors}\label{sub:nonsingular-compressibles}
\section{Singular value bound -- Proof of \cref{thm:main}}\label{sec:singular-value}
\subsection{Initial reduction}
Note that the vector $(1/\sqrt{n}, 1/\sqrt{n},\dots, 1/\sqrt{n})$ is deterministically a unit vector achieving the largest singular value; hence, the singular vector attaining the smallest singular must be orthogonal to it, so that we may restrict ourselves to $\mb{S}_0^{n-1}$ in the subsequent discussion. In particular, we fix maps $x \colon \mc{M}_{n,d} \to \mb{S}_{0}^{n-1}$ and $y: \mc{M}_{n,d} \to \mb{S}_{0}^{n-1}$ such that for $A \in \mc{M}_{n,d}$, $x(A)$ is a right least singular vector and $y(A)^{T}$ is a left least singular vector.

Throughout, we fix $\kappa$ as in \cref{thm:main}. Let $\mc{S}$ be the event that $\snorm{Ax(A)}_2\le\kappa$, which is the principal event we wish to study. Let $\chi > 0$ be a sufficiently small constant to be determined at the end of the analysis (this should not be confused with the abstract parameter $\chi$ appearing in \cref{def:level-set-CLCD}, \cref{lem:metric-entropy-CLCD}). We will assume that $\kappa\ge e^{-\chi n}$, since the statement of \cref{thm:main} for $\kappa < e^{-\chi n}$ follows from the statement for $\kappa = e^{-\chi n}$. %otherwise, we may replace $\kappa$ by $\max(\kappa, e^{-\chi n})$, which only increases the probability we need to control.

Our proof will involve various parameters; the dependencies between them may be succinctly represented as follows: %Now we choose parameters. Ultimately, we will have
\begin{align}
\label{eqn:parameter-dependence}
(n^{-1}\alpha:=)\mu\ll\gamma \ll \eta,Q^{-1}\ll\nu_1,\nu_2,\nu_3\ll\delta,\rho\ll\lambda,
\end{align}
with $\mu$ chosen at the very end to enable various union bound arguments to go through with exponential room (technically, $\chi$ is chosen after $\mu$ but this is conceptually unimportant). More precisely, $\lambda$ is fixed in the statement of \cref{thm:main}. We choose  $\delta, \rho$ (depending only on $\lambda$) as in \cref{lem:compressible-singular-vector} below. Next, we choose $\nu = (\nu_1,\nu_2,\nu_3)$ as in \cref{lem:robust-split-matching}, based on $\delta,\rho$. This also gives us a family $\mc{R} = \mc{R}_{\delta,\rho}$ of pairs $(\sigma,S) \in \mf{S}_{[n]}\times \binom{[n]}{n/2}$ with certain properties that we will need. Note that $|\mc{R}| = O_{\delta, \rho}(1)$, and hence $O_\lambda(1)$ under the choices we have made.
Next, choose $Q$ and $\eta$ such that
\begin{align}
\label{eqn:eta-Q}
\eta < \lambda^{2}(1-\lambda)^{2}, \quad 2(\lambda^2+(1-\lambda)^2)^Q:= \eta \ll_\nu 1,
\end{align}
with the requisite smallness coming from \cref{lem:lower-bound-D(x)}. Having chosen $\eta, Q, \nu$, we choose $\gamma$ sufficiently small as per \cref{lem:lower-bound-D(x)}. Finally, we will work with the $\on{QCLCD}$ as in \cref{def:QCLCD} with parameter $\alpha = \mu n$, where $\mu$ will be taken to be a constant much smaller than all previously defined constants in accordance with \cref{eqn:small-LCD}. 

For the reader's convenience, we collect various events that will appear during the course of our proof. 
\begin{align*}
& \mc{S} = \{\snorm{Ax(A)}_{2} \leq \kappa\},\\
&\mc{C}_R = \{\exists x\in\on{Comp}_{\delta,\rho}^0: \snorm{Ax}_2 = \snorm{Ax(A)}_2\},\quad\mc{C}_L = \{\exists y\in\on{Comp}_{\delta,\rho}^0: \snorm{y^TA}_2 = \snorm{Ax(A)}_2\},\\
&\mc{C}=\mc{C}_L\cup\mc{C}_R,\\
&\mc{Q}_{Q,\mc{R}}\text{ as in \cref{sub:quasirandom}},\\
&\mc{I}_{\nu}(y(A), \sigma)\text{ as in \cref{def:I}},\\ &\mc{J}_{\nu}(x(A), S)\text{ as in \cref{def:J}},\\
&\mc{F}_{S, \sigma}\text{ is the sigma-algebra in \cref{def:F}}.
\end{align*}

We will also repeatedly abuse notation by stating expectation of events; events should be understood as the appropriate indicator. 

With these preliminaries, note that we have
\begin{align*}
\mb{P}[\mc{S}]&\le\mb{P}[\mc{C}\cap\mc{S}]+\mb{P}[\mc{Q}_{Q,\mc{R}}^c]+\mb{P}[\mc{C}^c\cap\mc{Q}_{Q,\mc{R}}\cap\mc{S}]\\
&\le O_\lambda(\exp(-cn)) + \mb{P}[\mc{Q}_{Q,\mc{R}}\cap\mc{C}^c\cap\mc{S}],
\end{align*}
where $c$ is the smaller of the two constants found in \cref{thm:quasirandom} and \cref{lem:compressible-singular-vector} below (note that this application of \cref{lem:compressible-singular-vector} requires $\kappa \lesssim _{\lambda} \sqrt{n}$, which we may assume without loss of generality, since \cref{thm:main} is trivially true outside this regime). %Indeed, if $\mc{C}$ and $\mc{S}$ both hold then we have a least singular vector which is compressible and which has singular value less than $\kappa$, which is too small (except with exponential probability) by \cref{lem:compressible-singular-vector}. This holds as long as $\kappa\ll_\lambda\sqrt{n}$, which we may assume.

Now
\[\mb{P}[\mc{Q}_{Q,\mc{R}}\cap\mc{C}^c\cap\mc{S}]\le\sum_{(\sigma,S)\in\mc{R}}\mb{P}[\mc{Q}_{Q,\mc{R}}\cap\mc{C}^c\cap\mc{S}\cap\mc{I}_\nu(y(A),\sigma)\cap\mc{J}_\nu(x(A),S)];\]
this holds since $\mc{C}^{c}$ guarantees that $x(A), y(A) \in \on{Incomp}_{\delta, \rho}^{0}$, so that by \cref{lem:robust-split-matching}, the events $\mc{I}_{\nu}(y(A), \sigma)$ and $\mc{J}_{\nu}(x(A), S)$ must hold for some choice of $(\sigma, S) \in \mc{R}$.
%where $\mc{I}_\nu,\mc{J}_\nu$ are defined as in \cref{sub:robust}. We can do this since the event $\mc{C}^c$ guarantees that $x(A),y(A)\notin\on{Comp}_{\delta,\rho}^0$, and hence \cref{lem:robust-split-matching} applies.
Since $|\mc{R}| = O_{\lambda}(1)$ by \cref{lem:robust-split-matching}, it follows that up to losing an overall multiplicative factor of $O_{\lambda}(1)$, we may (and will) restrict our attention to a fixed choice of $(\sigma, S) \in \mc{R}$ i.e., we will provide a uniform (in $(\sigma, S)$ upper bound on each summand on the right hand side of the above equation). 

Therefore, fix $(\sigma, S) \in \mc{R}$ and for $i \in [\lfloor (n-1)/2 \rfloor ]$, recall the definition of $T_{2i-1}, T_{2i}$ from \cref{def:T}. %We will consider the sigma-algebra $\mc{F}_{S,\sigma}$ At this point, as in \cref{sub:switching}, given our set $S$ of size $n/2$ and permutation $\sigma$ we define a corresponding sigma-algebra $\mc{F}_{S,\sigma}$ which pairs up consecutive rows on each half. 
%We recall the definitions of $T_{2i-1}$ as the switching set $S_{\sigma(2i-1),\sigma(2i)}\cap S$ and $T_{2i}$ as the switching set $S_{\sigma(2i),\sigma(2i+1)}\cap S^c$. 
Let $\mc{T}_1$ be the multifamily of the odd-indexed sets $T_{2i-1}$ and $\mc{T}_2$ be the multifamily of the even-indexed sets $T_{2i}$. Then, by the law of total probability, we have
%We now reveal $\mc{F}_{S,\sigma}$.
\begin{align*}
&\mb{P}[\mc{Q}_{Q,\mc{R}}\cap\mc{C}^c\cap\mc{S}\cap\mc{I}_\nu(y(A),\sigma)\cap\mc{J}_\nu(x(A),S)]\\
&= \mb{E}_{\mc{F}_{S,\sigma}}[\mb{P}[\mc{Q}_{Q,\mc{R}}\cap\mc{C}^c\cap\mc{S}\cap\mc{I}_\nu(y(A),\sigma)\cap\mc{J}_\nu(x(A),S)|\mc{F}_{S,\sigma}]].
\end{align*}
We will provide an upper bound on the inner probability which is uniform over the realisations of $\mc{F}_{S,\sigma}$.

Note that by the parameter choice in \cref{eqn:eta-Q}, it follows that on the event  $\mc{Q}_{Q,\mc{R}}$,  $\mc{T}_1$ is $(Q,\eta)$-well-spread with respect to $S$ (recall \cref{def:well-spread}) and $\mc{T}_2$ is $(Q,\eta)$-well-spread with respect to $S^c$. Thus, on the event $\mc{Q}_{Q,\mc{R}}\cap  \mc{J}_\nu(x(A),S)$, it follows from \cref{lem:lower-bound-D(x)} that
\[\snorm{D(x(A)|_T)}_2\gtrsim_{\nu,\eta,Q}\sqrt{n}\]
for all but less than $2Q$ sets $T\in\mc{T}$, and hence, from \cref{lem:lower-bound-D(x)} that
\[\on{QCLCD}_{2Q,\alpha,\gamma}^{\mc{T}}(x(A))\gtrsim_{\nu,\eta,Q}\sqrt{n}.\]
for all but less than $2Q$ sets $T \in \mc{T}$. Therefore, letting $D = 2^d$,
\begin{align}
\label{eqn:split-lcd}
&\mb{P}[\mc{Q}_{Q,\mc{R}}\cap\mc{C}^c\cap\mc{S}\cap\mc{I}_\nu(y(A),\sigma)\cap\mc{J}_\nu(x(A),S)|\mc{F}_{S,\sigma}] \nonumber \\
&\le\sum_{d=\log(c_{\nu, \eta, Q}\sqrt{n})}^{\log(\mu n/\kappa)}\mb{P}[\mc{Q}_{Q,\mc{R}}\cap\mc{C}^c\cap\mc{S}\cap\mc{I}_\nu(y(A),\sigma)\cap\mc{J}_\nu(x(A),S)\cap\on{QCLCD}_{2Q,\mu n,\gamma}^{\mc{T}}(x(A))\in[D,2D]|\mc{F}_{S,\sigma}] \nonumber\\
&\qquad+\mb{P}[\mc{Q}_{Q,\mc{R}}\cap\mc{C}^c\cap\mc{S}\cap\mc{I}_\nu(y(A),\sigma)\cap\mc{J}_\nu(x(A),S)\cap\on{QCLCD}_{2Q,\mu n,\gamma}^{\mc{T}}(x(A))\ge\mu n/\kappa|\mc{F}_{S,\sigma}],
\end{align}
where $c_{\nu, \eta, Q}$ is a constant depending on $\nu, \eta, Q$ coming from \cref{lem:lower-bound-D(x)}. We will deal with the first term in \cref{sub:small-QCLCD} and the second term in \cref{sub:large-QCLCD}. 

\subsection{Compressible vectors}
In this short subsection, we quickly show that $\mb{P}[\mc{C} \cap \mc{S}]$ is exponentially small. In fact, the following is a much stronger statement. 
\begin{lemma}\label{lem:compressible-singular-vector}
There exist $\delta,\rho,c\in(0,1)$ (depending only on $\lambda$) so that
\[\mb{P}\bigg[\inf_{x\in\on{Comp}_{\delta,\rho}^0}\snorm{Ax}_2 < c\sqrt{n}\bigg]\le 2\exp(-cn).\]
\end{lemma}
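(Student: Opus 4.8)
The plan is to reduce the infimum over the continuous set $\on{Comp}_{\delta,\rho}^{0}$ to a union bound over a finite net, using the single-vector estimate \cref{lem:single-vector} as the pointwise input. First I would record the operator norm bound: deterministically $\snorm{A} \le d \le n$ (each row and column has $\ell^1$-norm $d$), so $A$ is $n$-Lipschitz as a map $\mb{S}^{n-1}\to\mb{R}^n$. This means that if $x' $ is within Euclidean distance $\rho_0$ of $x$, then $\snorm{Ax}_2 \ge \snorm{Ax'}_2 - n\rho_0$, so it suffices to control $\snorm{Ax'}_2$ on a $\rho_0$-net of $\on{Comp}_{\delta,\rho}^{0}$ with $\rho_0 = c'/\sqrt n$ for a small constant $c'$ (to be compared against the $c_\lambda\sqrt n$ from \cref{lem:single-vector}).

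Next I would bound the size of such a net. A vector in $\on{Comp}_{\delta,\rho}$ is within $\rho$ of a $\delta n$-sparse unit vector; the set of $\delta n$-sparse unit vectors admits a net of size at most $\binom{n}{\delta n}\cdot (C/\rho_0)^{\delta n} \le (C'/(\delta\rho_0))^{\delta n} \le e^{C''\delta\log(1/\delta)\, n}$ by the standard volumetric estimate on each of the $\binom{n}{\delta n}$ coordinate subspaces (here I am choosing the net to consist of genuinely $\delta n$-sparse vectors within $2\rho$ of the original, which is fine after adjusting $\rho$). So there is a net $\mc{N}$ of $\on{Comp}_{\delta,\rho}$ (hence of $\on{Comp}_{\delta,\rho}^0$) at scale $O(\rho + \rho_0)$ of cardinality at most $e^{C''\delta\log(1/\delta)\,n}$. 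Strictly, to apply \cref{lem:single-vector} I want the net points to lie in (or very near) $\mb{S}_0^{n-1}$; since every $x\in\on{Comp}^0_{\delta,\rho}$ has $\mbf 1\cdot x = 0$, I can take the net points to be projections onto $\mb{S}_0^{n-1}$ of the sparse net vectors, which changes distances by a negligible amount, or simply apply \cref{lem:single-vector} noting its proof only used $\snorm{D(x)}_2^2 = n$, which holds approximately for near-sum-zero vectors.

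Then the core step: by \cref{lem:single-vector}, for each fixed $x'\in\mc{N}$, $\mb{P}[\snorm{Ax'}_2 \le c_\lambda\sqrt n] \le 2e^{-c_\lambda n}$. Union bounding over $\mc{N}$,
\[
\mb{P}\big[\exists x'\in\mc{N}: \snorm{Ax'}_2 \le c_\lambda\sqrt n\big] \le 2\, e^{C''\delta\log(1/\delta)\,n}\, e^{-c_\lambda n}.
\]
Choosing $\delta$ small enough (depending only on $\lambda$) that $C''\delta\log(1/\delta) < c_\lambda/2$, this is at most $2e^{-c_\lambda n/2}$. On the complementary event, every $x\in\on{Comp}_{\delta,\rho}^0$ has a net point $x'$ with $\snorm{x-x'}_2 \le c_\lambda/(2\sqrt n)$ (after fixing $\rho, \rho_0$ small accordingly, and also shrinking $\rho$ so that $\on{Comp}_{\delta,\rho}^0$ is covered at this scale), whence $\snorm{Ax}_2 \ge \snorm{Ax'}_2 - n\cdot c_\lambda/(2\sqrt n) \ge c_\lambda\sqrt n - (c_\lambda/2)\sqrt n = (c_\lambda/2)\sqrt n$. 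Taking $c = c_\lambda/2$ and the final $\delta,\rho$ as chosen gives the lemma.

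The main obstacle is the bookkeeping around the interplay of the three scales — the compressibility radius $\rho$, the net radius $\rho_0$, and the target $c\sqrt n$ — together with the operator norm being order $n$ rather than $\sqrt n$ (this is why the net must be at scale $\Theta(1/\sqrt n)$ rather than a constant, and why it is essential that the metric entropy of $\on{Comp}_{\delta,\rho}$ carries a factor of $\delta$ in the exponent so that it can be beaten by $e^{-c_\lambda n}$ after choosing $\delta$ small). One must order the choices carefully: first $\delta$ small relative to $c_\lambda$, then $\rho$ and $\rho_0$ small relative to $c_\lambda$ and $n$, so that no circularity arises. Everything else is routine volumetric estimation plus \cref{lem:single-vector}.
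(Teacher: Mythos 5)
There is a genuine gap, and it is precisely the issue the paper is forced to work around by invoking the Livshyts randomized-rounding net (\cref{thm:general-HS-net}). Your Lipschitz bound $\snorm{A}\le d$ forces the net to be at scale $\rho_0 = \Theta(1/\sqrt n)$, and you then claim the $\rho_0$-net of $\delta n$-sparse unit vectors has cardinality $\le (C'/(\delta\rho_0))^{\delta n} \le e^{C''\delta\log(1/\delta)n}$. The second inequality is false when $\rho_0 \asymp 1/\sqrt n$: the log of the cardinality is $\delta n\log\bigl(C'/(\delta\rho_0)\bigr) = \delta n\bigl(\log(C'/c') + \log(1/\delta) + \tfrac12\log n\bigr)$, and the $\tfrac12\,\delta n\log n$ term is superlinear in $n$ for any fixed constant $\delta > 0$. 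Your union bound then reads $2\exp\bigl(\Theta(\delta n\log n) - c_\lambda n\bigr)$, which diverges. Choosing $\delta$ smaller does not help unless you take $\delta = O(1/\log n)$, but $\delta$ must be a constant depending only on $\lambda$ for \cref{lem:compressible-singular-vector} to be usable in the rest of the argument (and the incompressible machinery also needs constant $\delta,\rho$).

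This is exactly why the paper does not use a net at scale $1/\sqrt n$ with the operator-norm Lipschitz estimate. Instead, it applies \cref{thm:general-HS-net} with a constant-scale net (so $N(\on{Comp}^0_{\delta,\rho},\beta B_2^n)\le e^{c_\lambda n/4}$ after choosing $\delta,\rho$ small) and uses the crucial feature of the Livshyts construction: the approximation error is $\snorm{B(x-y)}_2 \le \tfrac{2\beta}{\alpha\sqrt n}\snorm{B}_{\on{HS}}$, and since $A$ has $\{0,1\}$ entries $\snorm{A}_{\on{HS}}\le n$, so the error is $\lesssim \beta\sqrt n/\alpha$ — a gain of a factor $\sqrt n$ over the naive bound $\snorm{A}\cdot\snorm{x-y}_2$. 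The extra $\exp(C\alpha^{0.08}\log(1/\alpha)n)$ factor in the net size is made $\le e^{c_\lambda n/4}$ by choosing $\alpha$ small, which is permissible since it depends only on $c_\lambda$. Your argument, as written, does not have this $\sqrt n$ saving and hence cannot close.
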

\begin{proof}
As is by now standard, the proof follows easily from combining the estimate for invertibility with respect to a single vector (\cref{lem:single-vector}) with an appropriate net argument (\cref{thm:general-HS-net}). We provide the details for completeness.

By \cref{lem:single-vector}, for any fixed $x\in\mb{S}_0^{n-1}$,
$\mb{P}[|Ax|\le c_{\lambda}\sqrt{n}]\le 2e^{-c_{\lambda}n}$.
%for a fixed $c = c_\lambda > 0$. 
Now we choose $\alpha_{\ref{thm:general-HS-net}}$ sufficiently small in terms of $c_\lambda$ so that
$-C_{\ref{thm:general-HS-net}}\alpha_{\ref{thm:general-HS-net}}^{0.08}\log(\alpha_{\ref{thm:general-HS-net}})\le c_{\lambda}/4$
and $\beta_{\ref{thm:general-HS-net}}$ sufficiently small in terms of $\alpha_{\ref{thm:general-HS-net}}$ so that
$2\beta_{\ref{thm:general-HS-net}} \le c_\lambda \alpha_{\ref{thm:general-HS-net}}/2$ (recall that $C_{\ref{thm:general-HS-net}} > 0$ is an absolute constant) Then, we choose $\delta,\rho$ sufficiently small so that $N(\on{Comp}_{\delta,\rho}^0,\beta_{\ref{thm:general-HS-net}} B_2^n)\le e^{c_{\lambda}n/4}$ (which is easily seen to be possible).

Applying \cref{thm:general-HS-net} to $S = \on{Comp}_{\delta,\rho}^0$ and $\alpha_{\ref{thm:general-HS-net}},\beta_{\ref{thm:general-HS-net}}$, there is a net $\mc{N}$ of size at most $e^{c_\lambda n/4}\cdot e^{c_\lambda n/4} = e^{c_\lambda n/2}$
such that for any $m\times n$ matrix $B$ with $\snorm{B}_{\on{HS}}\le n$ and $x\in\on{Comp}_{\delta,\rho}^0$, there is $y\in\mc{N}$ with
$\snorm{B(x-y)}_2\le\frac{c_{\lambda}\sqrt{n}}{2}.$

Since $\snorm{A}_{\on{Hs}} \leq n$ for all $A \in \mc{M}_{n,d}$, it therefore immediately follows that %setting $B = A$ in this condition yields
\begin{align*}
\mb{P}\bigg[\inf_{x\in\on{Comp}_{\delta,\rho}^0}\snorm{Ax}_2 < \frac{c_\lambda \sqrt{n}}{2}\bigg]&\le\mb{P}\bigg[\exists y\in\mc{N}: \snorm{Ay}_2 < c_\lambda \sqrt{n}\bigg]\le 2^{c_\lambda n/2}(2e^{-c_\lambda n}) = 2e^{-c_\lambda n/2}. \qedhere
\end{align*}
\end{proof}

% Finally, we will use the following general tensorization result.
% \begin{lemma}\label{lem:tensorization-conditional}
% Suppose that $\zeta_1,\ldots,\zeta_m$ are random variables with
% \[\mb{P}[|\zeta_i| < \epsilon|\zeta_1,\ldots,\zeta_{i-1}]\le K\epsilon\forall\epsilon\ge\epsilon_0\]
% for all $i\notin Y$, where $|Y|\le m/2$. Then
% \[\mb{P}\bigg[\sum_{i=1}^m\zeta_k^2 < \epsilon^2m\bigg]\le(CK\epsilon)^{m-|Y|}\forall\epsilon\ge\epsilon_0,\]
% where $C$ is an absolute constant.
% \end{lemma}
% \begin{proof}
% This is a trivial modification of \cite[Lemma~6.5]{Rud14}.
% \end{proof}

\subsection{Small QCLCD}\label{sub:small-QCLCD}
In this subsection, we will bound the first term on the right hand side in \cref{eqn:split-lcd}, by showing that each summand is exponentially small. Thus, fix
$D\in[c_{\nu, \eta, Q}\sqrt{n},\mu n/\kappa]$. %By \cref{lem:HS-net-QCLCD}, we have a net $\mc{N}$ which approximates $x\in\mb{S}^{n-1}$ satisfying $\mc{J}_\nu(x,S)$ and $\on{QCLCD}_{2Q,\mu n,\gamma}^{\mc{T}}(x)\in[D,2D]$. Specifically,
Then, recalling the definition of the level sets of the $\on{QCLCD}$, denoted by $K_{\mc{T}, D, \mu}$ (\cref{def:level-set-QCLCD}), we have
\begin{align*}
&\mb{P}[\mc{Q}_{Q,\mc{R}}\cap\mc{C}^c\cap\mc{S}\cap\mc{I}_\nu(y(A),\sigma)\cap\mc{J}_\nu(x(A),S)\cap\on{QCLCD}_{2Q,\mu n,\gamma}^{\mc{T}}(x(A))\in[D,2D]|\mc{F}_{S,\sigma}]\\
%&\le\mb{P}[\mc{Q}_{Q,\mc{R}}\cap\mc{S}\cap\mc{J}_\nu(x(A),S)\cap\on{QCLCD}_{2Q,\mu n,\gamma}^{\mc{T}}(x(A))\in[D,2D]|\mc{F}_{S,\sigma}]\\
&\le\mb{P}[\mc{Q}_{Q,\mc{R}}\cap\snorm{Ax(A)}_2\le\kappa\cap x(A)\in K_{\mc{T},D,\mu}|\mc{F}_{S,\sigma}].
\end{align*}
%where $K_{\mc{T},D}$ is defined as in \cref{def:level-set-QCLCD}. 
Note that on the event $\mc{Q}_{Q,\mc{R}}$, we have in particular that $\mc{T}_1$ is $(Q,\eta)$-well-spread with respect to $S$ and $\mc{T}_2$ is $(Q,\eta)$-well-spread with respect to $S^c$, and also that each $T \in \mc{T}(:= \mc{T}_1 \cup \mc{T}_2)$ has size at least $2\theta n$, where $2\theta = \lambda^{2}(1-\lambda)^{2}$.\\  
%We may assume we are in a realization of $\mc{F}_{S,\sigma}$ such that $\mc{Q}_S'\cap\mc{Q}_S''$, and such that $\mc{T}_1$ is $(Q,\eta)$-well-spread with respect to $S$ and $\mc{T}_2$ is $(Q,\eta)$-well-spread with respect to $S^c$, otherwise the probability is automatically $0$ because of the condition $\mc{Q}_{Q,\mc{R}}$. (Note the events described are deterministic given $\mc{F}_{S,\sigma}$.)
%Now that we are in such a realization of $\mc{F}_{S,\sigma}$, 
\textbf{Approximation by a net: }Applying \cref{lem:HS-net-QCLCD}, we find that there is a net $\mc{N} \subseteq K_{\mc{T}, D, \mu} + (200\mu \sqrt{n}/D)B_2^{n}$ such that every $x\in K_{\mc{T},D,\mu}$ and every $m\times n$ matrix $A$ with $\snorm{A}_{\on{HS}} \leq n$, there exists a $y\in\mc{N}$ with
\[\snorm{A(x-y)}_2\le 100 \mu n /D.\]
Moreover, \cref{lem:HS-net-QCLCD} guarantees that 
\[|\mc{N}|\le D^3|\mc{T}|\bigg(\frac{CD\mu^{\theta-1}}{\sqrt{n}}\bigg)^n,\]
where $C$ depends only on $\delta,\rho,\gamma,\nu,\eta,Q, \theta$. %and $2\theta = \lambda^2(1-\lambda)^2$, since every $T\in\mc{T}$ satisfies $|T|\ge 2\theta n$ as $\mc{Q}_S'$ is true.

\noindent \textbf{Anti-concentration of net points: }By definition of $\mc{N}$, for every $y\in\mc{N}$, we have $z\in K_{\mc{T},D,\mu}$ such that $\snorm{y-z}_2\le 200\mu\sqrt{n}/D$. Moreover, since $z \in K_{\mc{T},D,\mu}$, it follows from \cref{def:level-set-QCLCD} and \cref{lem:lower-bound-D(x)} (noting the well-spread properties of $\mc{T}_1, \mc{T}_2$ that are guaranteed on the event $\mc{Q}_{Q,\mc{R}}$) that $\snorm{D(z|_{T})}_{2} \gtrsim_{\nu, \eta, Q} \sqrt{n}$ for all but at least $|\mc{T}|-2Q$ sets $T \in \mc{T}$. But then, for all such choices of $T$, we have   %, by the condition that $\mc{N}$ is contained in a neighborhood of $K_{\mc{T},D}$. Then for all but less than $2Q$ sets $T\in\mc{T}$ we have
\begin{align}
\label{eqn:y-z}
\snorm{y|_T-z|_T}_2\le\snorm{y-z}_2\le\frac{200\mu\sqrt{n}}{D}\le\frac{\gamma\snorm{D(z|_T)}_2}{5\sqrt{n}}
\end{align}
as long as $\mu\ll_{\nu,\eta,Q}\gamma$ (since $D \geq c_{\nu, \eta, Q}\sqrt{n}$), which we will be able to ensure. Moreover, since $z\in K_{\mc{T}, D, \mu}$, for at least $|\mc{T}| - 4Q$ sets $T \in \mc{T}$, \cref{eqn:y-z} holds, and also $\on{CLCD}_{\mu n, \gamma}(z|_{T}) \geq D$. %This comes from \cref{lem:lower-bound-D(x)} and the fact that $z\in K_{\mc{T},D}$ satisfies $\mc{J}_\nu(z,S)$ (as well as the well-spread assumptions which are coming from our good realization of $\mc{F}_{S,\sigma}$).
Therefore, by \cref{lem:stability-CLCD}, for at least $|\mc{T}| - 4Q$ sets $T \in \mc{T}$, we have 
\[\on{CLCD}_{\mu n/2,\gamma/2}(y|_T)\ge\min\bigg(D,\frac{\mu n}{4\sqrt{n}\snorm{y|_T-z|_T}_{2}}\bigg)\ge\frac{D}{800}.\]

Let the exceptional set of indices $i \in [|\mc{T}|]$ for which $T_i$ does not satisfy this property be $Y$, with $|Y|\le 4Q$. Then, for all $i\notin Y$ and $i \geq 2$, we have from \cref{lem:levy-concentration-CLCD} that
%Thus for each $y\in\mc{N}$ we have for $i\notin Y$ and $i\ge 2$ that
\begin{align}
\label{eqn:anti-y}
\mc{L}(A_{\sigma(i)}\cdot y|\mc{F}_{S,\sigma},A_{\sigma(1)},\ldots,A_{\sigma(i-1)};\epsilon)\lesssim_{\gamma,\nu,\eta,Q}\epsilon+\frac{1}{D}+e^{-4\mu^2N/9},
\end{align}
where $N = |T_{i-1}|\gtrsim_\lambda n$. Let us be more explicit about this deduction. First, note that the only randomness left in the row $A_{\sigma(i)}$ corresponds to the choices of $0$ and $1$ in $A_{\sigma(i)}|_{T_{i-1}}$ and furthermore, the fraction of zeros versus ones is constrained to be \[t := \frac{1}{2} + \frac{w_{\sigma(i-1),\sigma(i)}(S)}{N}\in [1/3,2/3].\] To see this, note that $w_{\sigma(i-1),\sigma(i)}(S)$ counts the difference in the number of forced ones in $T_{i-1}$ for $A_{\sigma(i-1)}$ and $A_{\sigma(i)}$, and the sum of the number of ones in the two rows, when restricted to $T_{i-1}$ is $|T_{i-1}|$ by definition. The inclusion of $t$ in the interval $[1/3, 2/3]$ holds because of the quasi-randomness condition $\mc{Q}_{Q,\mc{R}}$ (specifically, $\mc{Q}_S''$). Therefore the random variable $A_{\sigma(i)}\cdot y$, conditioned on the given information, is a shift of some variable $W_{tN,v}$ (\cref{def:W}) where $N = |T_{i-1}|\gtrsim_\lambda n$ and $v = y|_{T_{i-1}}$ (and the shift corresponds to the inner product of the remaining coordinates of $A_{\sigma(i-1)}$ and $y$, which is fixed). Given this, the anticoncentration claim for $A_{\sigma(i)}\cdot y$ follows as the various additional conditions for \cref{lem:levy-concentration-CLCD} follow from the fact that $i\notin Y$.

%by the quasi-randomness condition $\mc{Q}_S''$. 
Finally, it follows easily from \cref{eqn:y-z} that for $\mu \ll_{\nu, \eta, Q} 1$, which we will be able to ensure, there is a lower bound on the length of $D(y|_{T_{i-1}})$, dependent only on $\nu,\eta,Q$, for all $i\notin Y$. %Furthermore, we are using $\alpha,\gamma$ replaced by $\mu n/2, \gamma/2$ for the application of the lemma.

\noindent \textbf{Tensorization and union bound: }
From \cref{eqn:anti-y} applied with $\epsilon\ge\epsilon_0 = 1/D$, and noting that $e^{-4\mu^2N/9}\le 1/D$ since $\kappa\ge e^{-\chi n}$ (and $\chi\ll\mu$), we have for all $i\notin Y$, $i\geq 2$ that
\[\mc{L}(A_{\sigma(i)}\cdot y|\mc{F}_{S,\sigma},A_{\sigma(1)},\ldots,A_{\sigma(i-1)};\epsilon)\lesssim K_{\gamma,\nu,\eta,Q}\epsilon.\]
Therefore, a straightforward conditional version of the tensorization inequality \cite[Lemma~2.2(1)]{rudelson2008littlewood} shows that for an absolute constant $C > 0$,
%Now by \cref{lem:tensorization-conditional} for $\epsilon_0 = 1/D$, we obtain
\[\mb{P}\bigg[\snorm{Ay}_2\le\kappa+\frac{100\mu n}{D}\bigg]\le(CK)^{n-1-4Q}\bigg(\frac{\kappa}{\sqrt{n}}+\frac{100\mu\sqrt{n}}{D}\bigg)^{n-1-4Q}\le\bigg(\frac{C\mu\sqrt{n}}{D}\bigg)^{n-4Q-1},\]
using $\kappa\le\mu n/D$ and changing $C$ between the second and third quantities. Here we implicitly used that $n\ge 8Q$.

Finally, putting everything together, we have %we deduce (in these good realizations of $\mc{F}_{S,\sigma}$) that
\begin{align}
\label{eqn:small-LCD}
&\mb{P}[\mc{Q}_{Q,\mc{R}}\cap\snorm{Ax(A)}_2\le\kappa\cap x(A)\in K_{\mc{T},D,\mu}|\mc{F}_{S,\sigma}] \nonumber \\
&\le\sum_{y\in\mc{N}}\mb{P}\bigg[\snorm{Ay}_2\le\kappa+\frac{100\mu n}{D}\bigg|\mc{F}_{S,\sigma}\bigg] \nonumber\\
&\le D^3|\mc{T}|\bigg(\frac{CD\mu^{\theta-1}}{\sqrt{n}}\bigg)^n\cdot\bigg(\frac{C\mu\sqrt{n}}{D}\bigg)^{n-4Q-1}\le D^{4Q+4}\mu^{-4Q-1}(C\mu^\theta)^n,
\end{align}
changing $C$ between the final two quantities. Note here that $C$ does not depend on $\mu$ (or $\chi$), and that $\theta > 0$ is fixed by the value of $\lambda$.  %on all parameters except $\mu$ (and $\chi$), and that $\theta > 0$ (which we defined in a way dependent only on $\lambda$). 
Therefore, taking $\mu$ sufficiently small yields the desired result in this case, noting that we have an upper bound  $D\le\mu n/\kappa \leq \mu n e^{\chi n}$ and can choose $\chi$ sufficiently small depending on $\mu$.  %via $\kappa\ge e^{-\chi n}$ finishes so long as $\chi$ is small enough compared to that choice of $\mu$.

\subsection{Large QCLCD}\label{sub:large-QCLCD}
In this subsection, we will bound the second term on the right hand side of \cref{eqn:split-lcd}, i.e.,
\[\mb{E}_{\mc{F}_{S,\sigma}}[\mb{P}[\mc{Q}_{Q,\mc{R}}\cap\mc{C}^c\cap\mc{S}\cap\mc{I}_\nu(y(A),\sigma)\cap\mc{J}_\nu(x(A),S)\cap\on{QCLCD}_{2Q,\mu n,\gamma}^{\mc{T}}(x(A))\ge\mu n/\kappa|\mc{F}_{S,\sigma}]].\]
In this case, we will not be able to use a direct union bound argument as in the previous subsection, and will instead use a variant of an argument given in \cite{LLTTY19}, with the crucial addition of consideration of arithmetic structure. As before, we will provide an upper bound on the inner probability which is uniform over the realisations of $\mc{F}_{S,\sigma}$. %Various quantilization arguments appearing in \cite{LLTTY19} are instead replaced with technicalities arising from needing to incorporate arithmetic structure and averaging to obtain the optimal result. 

\noindent \textbf{Averaging: }%We first focus on the inner probability, as before. Again, we can assume $\mc{F}_{S,\sigma}$ is realized in a way such that various conditions are met such as $\mc{Q}_S'\cap\mc{Q}_S''$ and $\mc{T}_1,\mc{T}_2$ having $(Q,\eta)$-well-spread properties with respect to $S,S^c$. (Otherwise the inner probability is $0$.)
For any $x\in\mb{S}_0^{n-1}$, we define on the event $\mc{J}_{\nu}(x,S) \cap \mc{Q}_{Q, \mc{R}}$ the set  $\on{Sm}^{\mc{T}}(x)$ to contain the at most $2Q$ (by \cref{lem:lower-bound-D(x)}) indices $i$ which satisfy $\snorm{D(x|_{T_i})}_2\lesssim_{\nu,\eta,Q}\sqrt{n}$ and the $2Q$ indices corresponding to the $2Q$ lowest values of $\on{CLCD}_{2Q,\mu n,\gamma}(x|_{T_i})$. In particular, $|\on{Sm}^{\mc{T}}(x)| \leq 4Q$. %or which satisfy
%\[\snorm{D(x|_{T_i})}_2\ll_{\nu,\eta,Q}\sqrt{n}.\]
%Note that there are at most $2Q$ satisfying this last condition by \cref{lem:lower-bound-D(x)}.

Thus, on the event $\mc{J}_{\nu}(x(A),S)\cap \mc{Q}_{Q,\mc{R}}$, we have by definition that %for our good realization of $\mc{F}_{S,\sigma}$, we have the inequality of indicator functions:
\[\mbm{1}[\mc{I}_\nu(y(A),\sigma)]\le\frac{1}{\nu_1n-4Q}\sum_{i=1}^n\mbm{1}[|y(A)_{\sigma(i)}-y(A)_{\sigma(i+1)}|\sqrt{n}\ge\nu_2\cap i\notin\on{Sm}^{\mc{T}}(x(A))],\]
%where we simply use the definition of $\mc{I}_\nu(y(A),\sigma)$.
%Multiplying this by the event that our QCLCD is large, we have
so that on the event $\mc{Q}_{Q, \mc{R}}$,
\begin{align*}
&\mbm{1}[\mc{I}_\nu(y(A),\sigma)\cap\mc{J}_\nu(x(A),S)\cap\on{QCLCD}_{2Q,\mu n,\gamma}^{\mc{T}}(x(A))\ge\mu n/\kappa]\\
&\le\frac{1}{\nu_1n-4Q}\sum_{i=1}^{n-1}\mbm{1}[|y(A)_{\sigma(i)}-y(A)_{\sigma(i+1)}|\sqrt{n}\ge\nu_2\cap i\notin\on{Sm}^{\mc{T}}(x)\cap\on{QCLCD}_{2Q,\mu n,\gamma}^{\mc{T}}(x(A))\ge\mu n/\kappa]\\
&\le\frac{1}{\nu_1n-4Q}\sum_{i=1}^{n-1}\mbm{1}[|y(A)_{\sigma(i)}-y(A)_{\sigma(i+1)}|\sqrt{n}\ge\nu_2\cap\on{CLCD}_{\mu n,\gamma}^{\mc{T}}(x(A)|_{T_i})\ge\mu n/\kappa\\
&\qquad\qquad\qquad\qquad\qquad\cap\snorm{D(x(A)|_{T_i})}_2\gtrsim_{\nu,\eta,Q}\sqrt{n}].
\end{align*}
%as long as the realization of $\mc{F}_{S,\sigma}$ is good. Indeed, if the condition on the left is satisfied then at least $\nu_1n$ of the indices $i\in[n-1]$ satisfy $|y_{\sigma(i)}-y_{\sigma(i+1)}|\sqrt{n}\ge\nu_2$ by definition. Also, at most $4Q$ satisfy $i\in\on{Sm}^{\mc{T}}(x)$.

Using this, and taking probabilities gives %Finally, using this and the fact that bad realizations of $\mc{F}_{S,\sigma}$ contribute $0$, we obtain
\begin{align}
\label{eq:averaged-event}
&\mb{E}_{\mc{F}_{S,\sigma}}[\mb{P}[\mc{Q}_{Q,\mc{R}}\cap\mc{C}^c\cap\mc{S}\cap\mc{I}_\nu(y(A),\sigma)\cap\mc{J}_\nu(x(A),S)\cap\on{QCLCD}_{2Q,\mu n,\gamma}^{\mc{T}}(x(A))\ge\mu n/\kappa|\mc{F}_{S,\sigma}]] \nonumber \\
&\le\frac{1}{\nu_1n-4Q}\sum_{i=1}^{n-1}\mb{P}[\mc{Q}_{Q,\mc{R}}\cap\mc{C}^c\cap\mc{S}\cap\mc{I}_\nu(y(A),\sigma)\cap\mc{J}_\nu(x(A),S)\cap\on{QCLCD}_{2Q,\mu n,\gamma}^{\mc{T}}(x(A))\ge\mu n/\kappa \nonumber \\
&\qquad\cap|y(A)_{\sigma(i)}-y(A)_{\sigma(i+1)}|\sqrt{n}\ge\nu_2\cap\on{CLCD}_{\mu n,\gamma}(x(A)|_{T_i})\ge\mu n/\kappa\cap\snorm{D(x(A)|_{T_i})}_2\gtrsim_{\nu,\eta,Q}\sqrt{n}].
\end{align}
In the remainder of the proof, we will bound each of the $n-1$ probabilities in the above equation by a constant (depending on $\gamma, \eta, \nu, Q$) times $\kappa\sqrt{n}+\exp(-\Omega_\lambda(n))$, which will suffice. Without loss of generality, we will do this for the index $i=1$ (the argument for other indices follows by purely notational changes).

\noindent \textbf{Partitioning $\mc{M}_{n,d}$: }
We follow a similar idea as in \cite{LLTTY19} of partitioning our event space based on realisations of rows other than $A_{\sigma(1)}, A_{\sigma(2)}$. 

More precisely, let $\mc{H}$ be the set of all possible realizations of  $\mc{F}_{S,\sigma}$ as well as all elements other than $A_{\sigma(1)}|_{T_1},A_{\sigma(2)}|_{T_1}$. In particular, given an element in $H \in \mc{H}$, extending it to an element of $\mc{M}_{n,d}$ amounts to choosing the vector $(A_{\sigma(1)}-A_{\sigma(2)})|_{T_1}$, which is a $\pm 1$-valued vector with a fixed sum $w_{\sigma(1),\sigma(2)}(T_1)$ (note that the sum is fixed by $H$). For $H \in \mc{H}$, let $C_H$ be the subset of $\mc{M}_{n,d}$ extending $H$ in this manner. %$d$-regular matrices $M$ compatible with the information of $H\in\mc{H}$. Then 
Let $G_H$ be the subset of $C_H$ satisfying
\[\on{CLCD}_{\mu n,\gamma}(x(M)|_{T_1})\ge\mu n/\kappa\cap\snorm{D(x(M)|_{T_1})}_2\gtrsim_{\nu,\eta,Q}\sqrt{n}.\]
Let $\mc{H}_0$ be the set of $H\in\mc{H}$ such that either $G_H=\emptyset$ or such that the realisation of $\mc{F}_{S,\sigma}$ determined by $H$ does not satisfy $\mc{Q}_{Q,\mc{R}}$. 
In particular, for each $H\in\mc{H}\setminus\mc{H}_0$, we have $|G_H|\ge 1$. Finally, for each $H \in \mc{H}\setminus \mc{H}_0$, let $\wt{M}_H$ be a fixed (but otherwise arbitrarily chosen) matrix in $G_H$ with smallest least singular value among all matrices in $G_H$. Then, by the definition of $G_H$, we have
\[\on{CLCD}_{\mu n,\gamma}(x(\wt{M}_H)|_{T_1})\ge\mu n/\kappa\cap\snorm{D(x(\wt{M}_H)|_{T_1})}_2\gtrsim_{\nu,\eta,Q}\sqrt{n}.\]

\noindent \textbf{Reduction to distance to subspace: }Noting that for $H \in \mc{H}_0$, no $M \in C_H$ can simultaneously satisfy all three events $\mc{Q}_{Q,\mc{R}}$ and $\on{CLCD}_{\mu n,\gamma}(x(M)|_{T_1})\ge\mu n/\kappa$ and $\snorm{D(x(M)|_{T_1})}_2\gtrsim_{\nu,\eta,Q}\sqrt{n}$ appearing in the probability on the right hand side of \cref{eq:averaged-event}, it suffices to bound
\begin{align}
\label{eqn:event-not-H_0}
%&\mb{P}[\mc{Q}_{Q,\mc{R}}\cap\mc{C}^c\cap\mc{S}\cap\mc{I}_\nu(y(A),\sigma)\cap\mc{J}_\nu(x(A),S)\cap\on{QCLCD}_{2Q,\mu n,\gamma}^{\mc{T}}(x(A))\ge\mu n/\kappa\\
%&\quad\cap|y(A)_{\sigma(1)}-y(A)_{\sigma(2)}|\sqrt{n}\ge\nu_2\cap\on{CLCD}_{\mu n,\gamma}(x(A)|_{T_1})\ge\mu n/\kappa\cap\snorm{D(x(A)|_{T_1})}_2\gtrsim_{\nu,\eta,Q}\sqrt{n}]\\
%&\leq \mb{P}[H\notin\mc{H}_0\cap\mc{Q}_{Q,\mc{R}}\cap\mc{C}^c\cap\mc{S}\cap\mc{I}_\nu(y(A),\sigma)\cap\mc{J}_\nu(x(A),S)\cap\on{QCLCD}_{2Q,\mu n,\gamma}^{\mc{T}}(x(A))\ge\mu n/\kappa\\
%&\quad\cap|y(A)_{\sigma(1)}-y(A)_{\sigma(2)}|\sqrt{n}\ge\nu_2\cap\on{CLCD}_{\mu n,\gamma}(x(A)|_{T_1})\ge\mu n/\kappa\cap\snorm{D(x(A)|_{T_1})}_2\gtrsim_{\nu,\eta,Q}\sqrt{n}]\\
&\mb{P}[H\notin\mc{H}_0\cap\mc{Q}_{Q,\mc{R}}\cap\mc{S}\cap|y(A)_{\sigma(1)}-y(A)_{\sigma(2)}|\sqrt{n}\ge\nu_2\cap\on{CLCD}_{\mu n,\gamma}(x(A)|_{T_1})\ge\mu n/\kappa \nonumber \\
&\qquad\qquad\qquad\cap\snorm{D(x(A)|_{T_1})}_2\gtrsim_{\nu,\eta,Q}\sqrt{n}].
\end{align}
Moreover, on the event $\mc{S}$ and $|y(A)_{\sigma(1)}-y(A)_{\sigma(2)}|\sqrt{n}\ge\nu_2$, we have
\[\kappa\ge \snorm{y(A)^TA}_2 = \bigg\|\sum_{i=1}^ny(A)_iA_i\bigg\|_{2}\ge\frac{|y(A)_{\sigma(1)}-y(A)_{\sigma(2)}|}{2}\on{dist}(A_{\sigma(1)}-A_{\sigma(2)},V),\]
where $V = \on{span}\{A_{\sigma(1)}+A_{\sigma(2)},A_k: k\notin\{\sigma(1),\sigma(2)\}\}$. Thus, on $S \cap |y(A)_{\sigma(1)}-y(A)_{\sigma(2)}|\sqrt{n}\ge\nu_2|$, we must have, 
\[\on{dist}(A_{\sigma(1)}-A_{\sigma(2)},V)\le\frac{2\kappa\sqrt{n}}{\nu_2},\]
so that the probability in \cref{eqn:event-not-H_0} is bounded above by
\begin{align}
\label{eqn:event-distance}
&\mb{P}[H\notin\mc{H}_0\cap\mc{Q}_{Q,\mc{R}}\cap\on{CLCD}_{\mu n,\gamma}(x(A)|_{T_1})\ge\mu n/\kappa\cap\snorm{D(x(A)|_{T_1})}_2\gtrsim_{\nu,\eta,Q}\sqrt{n} \nonumber \\
&\cap\on{dist}(A_{\sigma(1)}-A_{\sigma(2)},V)\le2\kappa\sqrt{n}/\nu_2].
\end{align}

\noindent \textbf{Anti-concentration: }At this point, note that if $x(A)$ were independent of $(A_{\sigma(1)} - A_{\sigma(2)})|_{T_1}$, and further, if it were the normal to $V$, then we would be able to use small-ball concentration of this relatively unstructured vector (\cref{lem:levy-concentration-CLCD}) to complete the proof. % finish via \cref{lem:levy-concentration-CLCD}. 
Unfortunately, we do not have these two properties. To overcome this problem, we use \cite[Lemma~4.3]{LLTTY19}, which allows us to use a vector `approximately normal' to the subspace $V$ in order to lower bound $\on{dist}(A_{\sigma(1)}-A_{\sigma(2)},V)$. 
\begin{lemma}[{\cite[Lemma~4.3]{LLTTY19}}]
\label{lem:dist}
With notation as above, letting $N$ denote the $(n-2)\times n$ matrix obtained by removing rows $\sigma(1),\sigma(2)$ from $A$, and for every $w \in \mb{S}^{n-1}$, we have
\begin{align*}
    \on{dist}(A_{\sigma(1)}, V) \geq \frac{s_n(A)|\sang{A_{\sigma(1)},w}|}{s_n(A) + \snorm{N w}_2 + |\sang{A_{\sigma(1)}+A_{\sigma(2)},w}|}.
\end{align*}
Hence, if $\snorm{Nw}_2 \leq s_n(A)$ and $|\sang{A_{\sigma(1)}+A_{\sigma(2)},w}| \leq 2s_n(A)$, then $\on{dist}(R_{\sigma(1)},V) \geq |\sang{A_{\sigma(1)},w}|/4$.
\end{lemma}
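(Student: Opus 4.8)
This is a deterministic linear-algebra fact, so the plan is entirely non-probabilistic. First I would dispose of the case $s_n(A)=0$, where the right-hand side vanishes; so assume $A$ is invertible. Then $V$ is a hyperplane --- its $n-1$ generators $A_{\sigma(1)}+A_{\sigma(2)}$ and $\{A_k:k\notin\{\sigma(1),\sigma(2)\}\}$ are linearly independent because the rows of $A$ are --- so $V^\perp=\on{span}(u)$ for a unit vector $u$. The structural observation driving everything is that $u$ is orthogonal to each $A_k$ with $k\notin\{\sigma(1),\sigma(2)\}$ and to $A_{\sigma(1)}+A_{\sigma(2)}$; hence $Au$ is supported on the coordinates $\{\sigma(1),\sigma(2)\}$ and antisymmetric there, $Au=c\,(e_{\sigma(1)}-e_{\sigma(2)})$ with $c=\sang{A_{\sigma(1)},u}$. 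Writing $f=e_{\sigma(1)}-e_{\sigma(2)}$ and $g=e_{\sigma(1)}+e_{\sigma(2)}$, this gives $u=cA^{-1}f$ and hence the identifications
\[\on{dist}(A_{\sigma(1)},V)=|c|=\snorm{A^{-1}f}_2^{-1}=:d,\qquad s_n(A)=\snorm{A^{-1}}^{-1},\qquad d\ge \snorm{f}_2^{-1}s_n(A)=s_n(A)/\sqrt2 .\]

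Next I would fix $w\in\mb{S}^{n-1}$ and decompose $w=y+\sang{w,u}u$ with $y\in V$, $\snorm{y}_2\le1$. Since $|\sang{A_{\sigma(1)},\sang{w,u}u}|\le d$, it suffices to bound $|\sang{A_{\sigma(1)},y}|$. Put $q=Ay$; because $y\perp u$, the coordinates of $q$ off $\{\sigma(1),\sigma(2)\}$ agree with those of $Aw$ there, so their norm is $\snorm{Nw}_2$, and $q_{\sigma(1)}+q_{\sigma(2)}=\sang{A_{\sigma(1)}+A_{\sigma(2)},w}$. Decomposing orthogonally $q=q_N+a'f+bg$, one has $\sang{A_{\sigma(1)},y}=q_{\sigma(1)}=a'+b$ with $b=\tfrac12\sang{A_{\sigma(1)}+A_{\sigma(2)},w}$ small by hypothesis.

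The one nontrivial step --- the step I expect to be the main obstacle --- is bounding the ``antisymmetric part'' $a'$: the naive estimate $|\sang{A_{\sigma(1)},y}|\le\snorm{Ay}_2$ loses a factor $\snorm{A}$ and is useless here. Instead I would use that $y\perp u$, i.e.\ $\sang{A^{-1}q,A^{-1}f}=c^{-1}\sang{y,u}=0$; substituting the decomposition of $q$ into this identity solves for $a'$, and Cauchy--Schwarz gives $|a'|\le d\big(\snorm{A^{-1}q_N}_2+|b|\,\snorm{A^{-1}g}_2\big)$. Multiplying $|\sang{A_{\sigma(1)},y}|\le|a'|+|b|$ through by $s_n(A)=\snorm{A^{-1}}^{-1}$ and using $\snorm{A^{-1}q_N}_2\le\snorm{A^{-1}}\snorm{Nw}_2$, $\snorm{A^{-1}g}_2\le\sqrt2\,\snorm{A^{-1}}$ and $s_n(A)\le\sqrt2\,d$ collapses everything to $s_n(A)\,|\sang{A_{\sigma(1)},y}|\le d\big(\snorm{Nw}_2+\sqrt2\,|\sang{A_{\sigma(1)}+A_{\sigma(2)},w}|\big)$; adding the term $s_n(A)\,d$ coming from the $u$-component and rearranging yields
\[\on{dist}(A_{\sigma(1)},V)\ \ge\ \frac{s_n(A)\,|\sang{A_{\sigma(1)},w}|}{s_n(A)+\snorm{Nw}_2+\sqrt2\,|\sang{A_{\sigma(1)}+A_{\sigma(2)},w}|},\]
which is the claimed inequality up to the harmless factor $\sqrt2$ on the last term. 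The ``Hence'' clause is then immediate: substituting $\snorm{Nw}_2\le s_n(A)$ and $|\sang{A_{\sigma(1)}+A_{\sigma(2)},w}|\le2s_n(A)$ bounds the denominator by an absolute multiple of $s_n(A)$ (and tighter bookkeeping of the two $\sqrt2$'s, which only multiply the already-small quantity $|\sang{A_{\sigma(1)}+A_{\sigma(2)},w}|$, recovers the constant $4$; alternatively, when $s_n(A)=0$ the hypotheses force $w\perp V$ and the conclusion is trivial). Every step except the single identity $\sang{A^{-1}q,A^{-1}f}=0$ is a one-line estimate.
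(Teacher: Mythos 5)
Your proposal is a valid argument, but it takes a genuinely different route from the paper's (inherited from \cite{LLTTY19}, and sketched in a comment block in the source). The paper argues directly on $V$: write an arbitrary $z\in V$ as $z=b(A_{\sigma(1)}+A_{\sigma(2)})+\sum_{k\neq\sigma(1),\sigma(2)}b_kA_k$, observe $A_{\sigma(1)}-z=A^Ty$ for the explicit coefficient vector $y=(1-b,-b,-b_3,\ldots)$ so that $\snorm{A_{\sigma(1)}-z}_2\ge s_n(A)\snorm{y}_2$, and combine with $\snorm{A_{\sigma(1)}-z}_2\ge|\sang{A_{\sigma(1)},w}|-|\sang{z,w}|$ plus Cauchy--Schwarz on $\sang{z,w}$; rearranging gives exactly the displayed inequality and therefore the constant $4$. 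You instead identify the unit normal $u\propto A^{-1}(e_{\sigma(1)}-e_{\sigma(2)})$, orthogonally decompose $w$, pass to $q=Ay$, and exploit $\sang{A^{-1}q,A^{-1}f}=0$ to extract the antisymmetric coefficient $a'$. That is a nice structural viewpoint, and the steps you list are correct.

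However, the constant bookkeeping is not as claimed. Your derivation yields the denominator $s_n(A)+\snorm{Nw}_2+\sqrt{2}\,|\sang{A_{\sigma(1)}+A_{\sigma(2)},w}|$ rather than the stated one, and under the ``Hence'' hypotheses this gives $\on{dist}\ge|\sang{A_{\sigma(1)},w}|/(2+2\sqrt{2})$, not $/4$. The claim that ``tighter bookkeeping of the two $\sqrt{2}$'s recovers the constant $4$'' does not go through along the lines you sketch: the two bounds $\snorm{A^{-1}g}_2\le\sqrt{2}\snorm{A^{-1}}$ and $s_n(A)\le\sqrt{2}\,d$ are both saturated simultaneously (e.g.\ for $A=\on{diag}(s,s,M,\ldots,M)$), so Cauchy--Schwarz genuinely yields $2+2\sqrt{2}\approx 4.83$ in the worst case. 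For the application in the present paper this is harmless, since the constant $4$ is only used as an absolute constant that gets absorbed downstream; but your argument as written proves a slightly weaker numerical statement than the one in the lemma, whereas the more elementary infimum-over-$z\in V$ argument gets the sharp $4$ directly and with less machinery (no invertibility case split, no $A^{-1}$).
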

As we will see, one can take the vector $w$ in the above lemma to be $x(\wt{M}_H)$, which only depends on $H \in \mc{H}$, and hence, is independent of $(A_{\sigma(1)}-A_{\sigma(2)})|_{T_1}$.  %to transform a distance to kernel estimate into a distance to ``approximate kernel'' estimate. Furthermore, the approximate kernel vector used is one that depends only on $H\in\mc{H}$, not on $A$. In our case, it will be the vector .
%At this point, for simplicity we will assume $\sigma$ is the identity permutation (as it affects none of the arguments, and leads to more nontrivial indexing).
Indeed, note that any $A$ satisfying the event in \cref{eqn:event-distance} is in $G_H$ for some $H \in \mc{H}$, and that by definition, $s_n(\wt{M}_H)\le s_n(A)$. %by definition of $\wt{M}_H$ as having the smallest least singular value within $G_H$. 
Let $N_H$ be the $(n-2)\times n$ matrix obtained by removing the rows $\sigma(1),\sigma(2)$ from $A$, and let $s_H$ be the vector $A_{\sigma(1)}+A_{\sigma(2)}(= (\wt{M}_H)_{\sigma(1)} + (\wt{M}_H)_{\sigma(2)})$; as the notation suggests, both $N_H$ and $s_H$ depend only on $H$. %These are both deterministic given $H$, and the rows of $N$ along with $s$ span $V$. Furthermore, $s$ is also the sum of the two corresponding rows of $\wt{M}_H$. Finally, let $v = x(\wt{M}_H)$. 
We have
\begin{align*}
&\snorm{N_Hx(\wt{M}_H)}_2\le\snorm{\wt{M}_Hx(\wt{M}_H)} = s_n(\wt{M}_H)\le s_n(A),\\
&|\sang{s_H,x(\wt{M}_H)}|\le|\sang{(\wt{M}_H)_{\sigma(1)}+(\wt{M}_H)_{\sigma(2)},x(\wt{M}_H)}|\le 2s_n(\wt{M}_H)\le 2s_n(A).
\end{align*}
Therefore, \cref{lem:dist} shows that 
$$\on{dist}(A_{\sigma(1)}, V) \geq |\sang{A_{\sigma(1)}, x(\wt{M}_H)}|/4.$$
Noting that $\on{dist}(A_{\sigma(1)}-A_{\sigma(2)}, V) = 2\on{dist}(A_{\sigma(1)},V)$, which is readily seen using $A_{\sigma(1)}+A_{\sigma(2)}\in V$, it follows from the above equation that the probability in \cref{eqn:event-distance} is bounded above by
\begin{comment}
Now let $z\in V$, so that we can write
\[z = bs + \sum_{i=3}^n b_iA_i\]
for some $b,b_i\in\mb{R}$ for $i\ge 3$.
Thus since $v$ is a unit vector,
\[\snorm{A_1-z}_2\ge|\sang{A_1,v}|-|\sang{z,v}|.\]
Now let $y = (1-b,-b,-b_3,\ldots,-b_n)$, and note that because $s = A_1+A_2$ we have $A_1-z = A^Ty$. This yields
\[\snorm{A_1-z}_2\ge s_n(A^T)\snorm{y}_2 = s_n(A)\snorm{y}_2.\]
Finally, by Cauchy-Schwarz, we deduce
\begin{align*}
|\sang{z,v}|&\le|b||\sang{s,v}|+\bigg(\sum_{i=3}^n|b_i|^2\bigg)^{1/2}\bigg(\sum_{i=3}^n|\sang{A_i,v}^2\bigg)^{1/2}\\
&\le\snorm{y}_2(|\sang{s,v}|+\snorm{Nv}_2)\\
&\le\frac{\snorm{A_1-z}_2}{s_n(A)}(|\sang{s,v}|+\snorm{Nv}_2)\le 3\snorm{A_1-z}_2.
\end{align*}
Combining with $\snorm{A_1-z}_2\ge|\sang{A_1,v}|-|\sang{z,v}|$ gives
\[4\snorm{A_1-z}_2\ge|\sang{A_1,v}|.\]
Taking an infimum over all $z\in V$, we deduce
\[2\on{dist}(A_1-A_2,V) = 4\on{dist}(A_1,V)\ge|\sang{A_1,v}|.\]
Therefore we finally have that 
\begin{align*}
&\mb{P}[H\notin\mc{H}_0\cap\mc{Q}_{Q,\mc{R}}\cap\on{CLCD}_{\mu n,\gamma}(x(A)|_{T_1})\ge\mu n/\kappa\cap\snorm{D(x(A)|_{T_1})}_2\gtrsim_{\nu,\eta,Q}\sqrt{n}\\
&\cap\on{dist}(A_{1}-A_{2},V)\le2\kappa\sqrt{n}/\nu_2]\\
&\le \mb{P}[H\notin\mc{H}_0\cap\mc{Q}_{Q,\mc{R}}\cap\on{CLCD}_{\mu n,\gamma}(x(A)|_{T_1})\ge\mu n/\kappa\cap\snorm{D(x(A)|_{T_1})}_2\gtrsim_{\nu,\eta,Q}\sqrt{n}\\
&\cap\sang{x(\wt{M}_H),A_{1}}\le 4\kappa\sqrt{n}/\nu_2]\\
&\le\mb{P}[H\notin\mc{H}_0\cap\mc{Q}_{Q,\mc{R}}\cap\sang{x(\wt{M}_H),A_{1}}\le 4\kappa\sqrt{n}/\nu_2]
\end{align*}
\end{comment}
\begin{align}
\label{eqn:event-final}
    \mb{P}[H\notin\mc{H}_0\cap\mc{Q}_{Q,\mc{R}}\cap|\sang{x(\wt{M}_H),A_{\sigma(1)}}|\le 4\kappa\sqrt{n}/\nu_2]
\end{align}
Finally, since by the definition of $G_H$, we have
\[\on{CLCD}_{\mu n,\gamma}(x(\wt{M}_H)|_{T_1})\ge\mu n/\kappa\cap\snorm{D(x(\wt{M}_H)|_{T_1})}_2\gtrsim_{\nu,\eta,Q}\sqrt{n},\]
it follows from \cref{lem:levy-concentration-CLCD} (in the same manner as \cref{eqn:anti-y}, provided that we first reveal $H$ and then look at the remaining randomness in $A_{\sigma(1)}$) that the probability in \cref{eqn:event-final} is bounded above by
\[\frac{4C\kappa\sqrt{n}}{\nu_2} + \frac{C\kappa}{\mu n} + Ce^{-\Omega_\lambda(n)},\]
where $C$ depends only on $\gamma, \eta, \nu, Q$. This completes the proof. 
% if we first reveal $H$ and then look at the remaining randomness over $A_1$. Tracing backwards, we see this finishes.

\bibliographystyle{amsplain0.bst}
\bibliography{main.bib}

\end{document}